	\title{Likelihood Asymptotics of Stationary Gaussian Arrays}
\author{Carsten H. Chong\thanks{Department of Information Systems, Business Statistics and Operations Management, The Hong Kong University of Science and Technology, e-mail: carstenchong@ust.hk}
\and Fabian Mies\thanks{Delft Institute of Applied Mathematics, Delft University of Technology, e-mail: f.mies@tudelft.nl}}
\date{}
\def\T{T}
\theoremstyle{plain}
\newtheorem{theorem}{Theorem} 
\newtheorem{lemma}{Lemma}
\newtheorem{corollary}{Corollary}
\theoremstyle{definition}
\newtheorem{assumption}{Assumption}
\theoremstyle{remark}
\newtheorem{remark}{Remark}
\newcommand{\N}{\mathbb{N}}
\newcommand{\Z}{\mathbb{Z}}
\newcommand{\R}{\mathbb{R}}
\newcommand{\E}{\mathbb{E}}
\newcommand{\eps}{\epsilon}
\newcommand{\pconv}{\plim}
\newcommand{\Var}{\operatorname{Var}}
\newcommand{\plim}{\stackrel{\P}{\longrightarrow}}
\newcommand{\wconv}{\stackrel{d}{\longrightarrow}}
\newcommand{\tr}{\mathrm{tr}}
\newcommand{\balpha}{\boldsymbol{\alpha}}
\renewcommand{\P}{\mathbb{P}}
\newcommand{\bone}{\mathbf 1}
\newcommand{\calh}{{\mathcal H}}
\newcommand{\cala}{{\mathcal A}}
\newcommand{\calp}{{\mathcal P}}
\newcommand{\caln}{{\mathcal N}}
\newcommand{\al}{{\alpha}}
\newcommand{\la}{{\lambda}}
\newcommand{\La}{{\Lambda}}
\newcommand{\ga}{{\gamma}}
\newcommand{\Ga}{{\Gamma}}
\newcommand{\vp}{{\varphi}}
\newcommand{\si}{{\sigma}}
\newcommand{\ov}{\overline}
\newcommand{\un}{\underline}
\newcommand{\Den}{\Delta_n}
\newcommand{\op}{\mathrm{op}}
\newcommand{\settheoremtag}[1]{
	\let\oldtheassumption\theassumption
	\renewcommand{\theassumption}{#1}
	\g@addto@macro\endassumption{
		\global\let\theassumption\oldtheassumption}
}
\definecolor{darkblue}{RGB}{44, 107, 172}
\definecolor{darkred}{RGB}{132,28,28}
\numberwithin{equation}{section} 
\begin{document}

	\maketitle

	\begin{abstract}
		This paper develops an asymptotic likelihood theory for triangular arrays of stationary Gaussian time series depending on a multidimensional unknown parameter. 
		We give sufficient conditions for the associated sequence of statistical models to be locally asymptotically  normal in \mbox{Le Cam}'s sense, 
		which in particular implies the asymptotic efficiency of the maximum likelihood estimator. 
		Unique features of the array setting covered by our theory include potentially nondiagonal rate matrices as well as spectral densities that satisfy different power-law bounds at different frequencies and    may fail to be uniformly integrable.
		To illustrate our theory, we study  efficient estimation for Gaussian processes sampled at high frequency and for a class of  autoregressive models with moderate deviations from a unit root.
	\end{abstract}
	
\bigskip
\noindent \emph{Keywords:}
	Fractional Brownian motion;  high-frequency data;	local asymptotic normality; local-to-unity model; maximum likelihood estimator; mildly integrated process; Toeplitz matrix
		
\section{Introduction}

Maximum likelihood estimation  possesses attractive asymptotic properties: it is well known that the maximum likelihood estimator (MLE) based on independent and identically distributed random variables is consistent and asymptotically efficient under mild regularity assumptions (\cite{LC98}). In a time series context, the asymptotic properties of the MLE has also been extensively studied in the literature. Consider, for example, a
univariate stationary centered Gaussian time series $(X_t)_{t=1}^n$ with spectral density $f^\theta$ depending on an $M$-dimensional unknown parameter $\theta\in\Theta$. Classical results by 
\cite{Dahlhaus1989} and \cite{Lieberman2012} state that under mild assumptions on $f^\theta$, the MLE of $\theta$ is consistent, asymptotically efficient and normal with covariance matrix $n^{-1}I(\theta_0)^{-1}$, where $I(\theta)$ is the  asymptotic Fisher information matrix
\begin{equation}
	I(\theta)= \frac{1}{4\pi}\int_{-\pi}^{\pi} \frac{\nabla f^{\theta}(\lambda) \nabla f^{\theta}(\lambda)^\T }{f^{\theta}(\lambda)^2}\, d\lambda \label{eqn:fisher-sequence}
\end{equation}
and $\theta_0$ is the true value of the unknown parameter $\theta$.  \cite{Cohen2013}  further showed that the associated sequence of statistical models $(\P^n_\theta)_{n\in\N,\theta\in\Theta}$ has the \emph{local asymptotic normality (LAN)} property in Le Cam's sense (cf.\ also \cite{TYZ25}). That is, for every $\theta\in\Theta$, there is a sequence of random vectors $(\xi_n(\theta))_{n\in\N}$ such that as $n\to\infty$, 
\begin{equation}\label{eq:LAN} 
	\xi_n(\theta)\stackrel{d}{\longrightarrow} \caln(0,I(\theta)),\quad \log \frac{d\P^n_{\theta+R_na}}{d\P^n_\theta} - a^\T\xi_n(\theta)+\frac12a^\T I(\theta)a \stackrel{\P}{\longrightarrow}0\quad (a\in\R^M)
\end{equation}
under $\P^n_\theta$, where $R_n=n^{-1/2}I_M$ and $I_M$ is the identity matrix in $\R^M$. The LAN property  implies the asymptotic efficiency of the MLE (see Section II.12 of \cite{IH81}):  For any sequence of estimators $\widehat{\theta}_n$ and any symmetric nonnegative quasi-convex loss function $\ell$ with $\ell(z) = o( \exp(\epsilon |z|^2))$ as $|z|\to\infty$ for any $\epsilon>0$, 
\begin{align}
	\liminf_{\delta\to\infty} \liminf_{n\to\infty} \sup_{|R_n^{-1}(\theta-\theta_0)|\leq \delta} \E (\ell(R_n^{-1}(\widehat{\theta}_n-\theta))) 
	\geq \E (\ell(Z)), \quad Z \sim \caln(0, I(\theta_0)^{-1}). \label{eqn:Lecam}
\end{align}

In this paper, we aim to derive an analogous result to \eqref{eq:LAN} for \emph{arrays} of stationary Gaussian time series. This will be the content of our main result, Theorem~\ref{thm:LAN} below. 
Compared to the classical time series framework, there are two distinct challenges when arrays of time series are studied.

First, the spectral density $f_n^\theta$ varies with the sample size and, in  various  examples of interest, in such a way that the right-hand side in \eqref{eqn:fisher-sequence} with $f^\theta_n$ substituted for $f^\theta$ converges to a \emph{non-invertible} limit as $n\to\infty$  (\cite{Kawai2013}). In these situations, the asymptotic efficiency of the MLE no longer follows from the  degenerate LAN result. To recover non-degeneracy of the limiting Fisher information matrix, we resort to \emph{nondiagonal} rate matrices $R_n$ in \eqref{eq:LAN}. Indeed, even in cases where the limiting Fisher information matrix is singular upon the classical choice $R_n=n^{-1/2}I_M$, it is often possible to obtain a regular limit by choosing nondiagonal rate matrices $R_n$. This in turn guarantees   asymptotic efficiency of the MLE and determines its (optimal) asymptotic variance.
The fact that the limiting Fisher information matrix may fail to be regular and the possibility of restoring invertibility by taking nondiagonal rate matrices were first observed by \cite{brouste2018local}, in the particular case of estimating fractional Brownian motion  under infill asymptotics.

Second, we need to devise a new set of regularity conditions concerning the behavior of the spectral densities $f^\theta_n$ around zero that allows us to prove Theorem~\ref{thm:LAN} and, at the same time, applies to a broad variety of typical examples.  To this end, we require the spectral density and its inverse  and  derivatives be expressible as linear combinations of functions each of which satisfies multiple power-law bounds around zero with different exponents. In addition, we explicitly allow the sequence of spectral densities $f^\theta_n$ to be not uniformly integrable, with a pole of an order larger than $1$ at the origin in the limit. Structural assumptions of this generality are needed in order to cover the examples we study and constitute substantial generalizations of the conditions required in the classical time series setting (see e.g., \cite{Lieberman2012}), which bound the spectral density and its inverse and derivatives by a single integrable power-law function. 

We illustrate our theoretical results in three typical situations where time series arrays arise. First, we consider estimation based on high-frequency data. Because the number of data points increases on a fixed interval, this is a typical case where the observations form a time series array. In Corollary~\ref{cor:2}, we show    the LAN property for a superposition of two independent fractional Brownian motions with different Hurst parameters. \cite{chong_when_2022} applies an extension of this model to the modeling of microstructure noise in high-frequency asset return data. Our result implies that the estimators of \cite{chong_when_2022} attain an optimal rate of convergence in general. As in the case of a single fractional Brownian motion, the rate matrices must be nondiagonal in this example  to guarantee an invertible limiting Fisher information matrix.

Next, we show that our theory also applies to   estimation under joint high-frequency and long-span asymptotics. In Corollary~\ref{cor:1}, we investigate the LAN property and the asymptotic efficiency of the MLE for a stationary fractional Ornstein--Uhlenbeck (fOU) process. This class of processes attracted increased attention recently in the econometric analysis of rough volatility (see e.g., \cite{WXY23,WXYZ24} and  \cite{SYZ24}). Even in the   simple case of a classical OU process, the associated spectral densities   are not uniformly integrable, a challenge that we address in our theory. The results we obtain for fOU processes further extend those obtained by, for instance, \cite{A02} and \cite{G02}, to a non-Markovian setting.


Finally, we apply   our theory  to time series with a sample-size dependent parameter. More precisely, we
consider a first-order autoregressive (AR) process where the AR coefficient  $\vp_n$ may change with the sample size. Besides the classical stationary case $\vp_n\equiv\vp \in(-1,1)$, both the cases $\vp_n\equiv 1$ (unit root) and $\vp_n=1-c/n$ with $c>0$ (local to unit root) have been extensively studied in the statistics and econometrics literature. We refer to \cite{DF79,DF81}, \cite{CW87}, \cite{P87} and \cite{PP88} for early seminal works and to \cite{Phillips2018} for a review of this topic. Against this background, our theory is applicable to  a class of \emph{mildly integrated} autoregressive processes (see \cite{PM07} and \cite{P23}), which have roots of the form $\vp_n=1-c/n^\al$ for some $c>0$ and $\al\in(0,1)$ and bridge the classical stationary and local-to-unity regimes. Such models have found applications in the econometric modeling of imploding price bubbles (\cite{PS18}) and in IVX estimation (\cite{PM09}, \cite{KMS14}, \cite{CZZ24}). Our paper contributes to this literature by showing, in Corollary~\ref{cor:3}, the LAN property  for mildly integrated processes with  Gaussian shocks for any $\al\in(0,\frac15)$, which implies the asymptotic efficiency of the MLE.  As the MLE has the same asymptotic distribution as the OLS estimator studied in \cite{PM07} and \cite{P23}, our theory also proves the asymptotic efficiency of the latter, if $\al\in(0,\frac15)$. 
Whether the LAN property (and asymptotic efficiency of the MLE) holds for $\al\in[\frac15,1)$ remains an open problem.

The remainder of the paper is organized as follows: Section~\ref{sec:LAN} contains the main result of the paper. Section~\ref{sec:ex} contains three examples that our theory is applicable to. 
 In Appendix~\ref{app:reg}, we  present more  general assumptions under which our main theorem holds. In Appendix~\ref{sec:examples}, we detail the proofs of Corollaries~\ref{cor:2}--\ref{cor:3}. 
 Appendix~\ref{app:technical} contains technical results needed for  the proof of our main result, which is given in Appendix~\ref{sec:proofs}.

\section{LAN Property of Stationary Gaussian Arrays}\label{sec:LAN}
For  $n\in\N$,  consider a centered Gaussian stationary time series $(X^{(n)}_t)_{t=1}^n$ with spectral density  $$f_n^\theta(\la)=\frac{1}{2\pi}\biggl( \E[(X^{(n)}_1)^2]+2\sum_{k=1}^\infty \cos(k\la)\E[X^{(n)}_1X^{(n)}_{1+k}]\biggr),\qquad\la\in(-\pi,\pi),$$
where $\theta\in\Theta$ is the unknown parameter of interest in this model and the parameter space $\Theta$ is an open subset of $\R^M$. It is well known that $X_n=(X^{(n)}_1,\dots,X^{(n)}_n)^\T \sim \caln(0, T_n(f^\theta_n))$, where $T_n(f)= (\int_{-\pi}^\pi e^{i(k-j)\la}f(\la)\,d\la)_{k,j=1}^n$ denotes the  Toeplitz matrix in $\R^{n\times n}$ associated to   an integrable symmetric function $f$. The assumptions in this paper guarantee that $f_n^\theta$ is strictly positive, which implies that $T_n(f_n^\theta)$ is positive definite and that the log-likelihood function  and the Fisher information matrix 
in this model are   
\begin{align*}
	l_n(\theta) &= -\frac{n}{2} \log (2\pi) -\frac{1}{2} \log [\det T_n(f_n^{\theta})] - \frac{1}{2}   X_n^\T T_n(f_n^{\theta})^{-1} X_n ,\\
	\widetilde{I}_n(\theta) 
	& = \frac{1}{2} \left( \tr\left( T_n(f_n^\theta)^{-1} T_n (\partial_{j} f_n^\theta) T_n(f_n^\theta)^{-1} T_n (\partial_{k} f_n^\theta) \right) \right)_{j,k=1}^M\\ 
	& \approx I_n(\theta)= \frac{n}{4\pi}\left( \int_{-\pi}^\pi \frac{\partial_j f_n^\theta(\lambda)  \partial_k f_n^\theta(\lambda)}{f_n^\theta(\lambda)^2} \,d\lambda\right)_{j,k=1}^M,
\end{align*} 
respectively.
The  approximation in the last line is known to hold asymptotically for fixed spectral densities $f_n^\theta=f^\theta$, and a major part of our proofs is concerned with this approximation for arrays. 
Our qualitative finding is that, under suitable regularity conditions, all information about the optimal rates of convergence and variances is contained in the pre-asymptotic Fisher information matrix $I_n(\theta)$, see \eqref{eq:cond1.1} below.

Given $h: \Theta\to \R$ and a multi-index $\beta=(\beta_1,\dots,\beta_M)^\T\in \N_0^M$, we write $
\partial_\beta h(\theta) = \partial_1^{\beta_1}\cdots \partial_M^{\beta_M}  h(\theta)$ and $|\beta| = \sum_{k=1}^M \beta_k$. We use $\lVert\cdot\rVert$ to denote an arbitrary norm on $\R^M$ or $\R^{M\times M}$. We also assume that the true parameter value $\theta_0\in\Theta$ is fixed throughout.
\begin{assumption}[Information]\label{ass:gauss-lan-1}
	There exist regular rate matrices $R_n\in \R^{M\times M}$ and  $I(\theta_0)\in\R^{M\times M}$ such that  for some small $\epsilon>0$ and all $\delta>0$, we have $n^\eps\|R_n\|\to0$ and
	\begin{equation}\label{eq:cond1.1} 
		\sup_{\|\theta-\theta_0\| \leq n^{-\delta}} \left\|
		R_n^\T I_n(\theta) R_n  - I(\theta_0)\right\|  \to 0
	\end{equation}
	as $n\to\infty$.
	Moreover, for any sufficiently small $\eta>0$, there are $K(\eta)>0$ and an open neighborhood $B(\eta)\subset \Theta$ of $\theta_0$ such that for all $\theta\in B(\eta)$, $|\beta|\leq 1$ and sufficiently large $n$,
	\begin{equation}\label{eq:cond1.2} 
		\int_{-\pi}^{\pi} |\lambda|^{-\eta} \frac{\|\partial_\beta R_n^\T \nabla f_n^{\theta}(\lambda)\|^2}{f_n^\theta(\lambda)^2}\, d\lambda 
		+ \int_{-\pi}^{\pi} |\lambda|^{-\eta} \frac{\|\partial_\beta R_n^\T D^2 f_n^{\theta}(\lambda) \|^2}{f_n^\theta(\lambda)^2}\, d\lambda \leq K n^{-1+\eta}.
	\end{equation}
\end{assumption}

Condition \eqref{eq:cond1.1} guarantees the existence of a regular limiting Fisher information matrix, while \eqref{eq:cond1.2} imposes mild integrability conditions on the higher-order derivatives of $f^\theta_n$. For classical stationary time series where $f_n^\theta=f^\theta$ does not depend on $n$,  one may choose the parametric scaling matrix $R_n = n^{-1/2} {I}_M$, in which case \eqref{eq:cond1.1} becomes trivial. In an array setting, however, choosing a nondiagonal rate matrix may be necessary for $I(\theta_0)$ to be invertible, see Corollaries~\ref{cor:2} and \ref{cor:1} below.

We also need growth and regularity bounds on the spectral density around zero. Letting $\Ga$   be the family of symmetric   integrable  functions on $\La=(-\pi,\pi)\setminus\{0\}$, we express these conditions through the following classes of functions, in which $c> 0$, $\alpha\in \R$ and $L:(0,\infty)\to (0,\infty)$:
\begin{equation}
	\begin{split}
		\Gamma_0(c,\alpha,L) &= \{ h\in\Gamma :   |h(\lambda)| \leq c |\lambda|^{-\alpha-\epsilon} L(\epsilon),\ \la\in\La,\ \epsilon>0  \}, \\
		\Gamma_1(c,\alpha,L) &= \{ h\in\Gamma\cap C^1(\La):   |h(\lambda)| + |\lambda \, h'(\lambda)| \leq c |\lambda|^{-\alpha-\epsilon} L(\epsilon),\ \la\in\La,\ \epsilon>0  \}.
	\end{split}
	\label{eqn:rate-form}
\end{equation}
The function $L$ allows for   slowly varying  terms in the small-$\la$ asymptotics of $h$. If an infinite collection of functions is considered, it also ensures uniformity of the bounds in \eqref{eqn:rate-form}. 

\begin{assumption}[Regularity]\label{ass:gauss-lan-2} For some $m, q\in\N$ and for all $|\beta|=0,1,2,3$, we can write 
	\begin{equation}
		\begin{aligned}
			\partial_\beta f_{n}^\theta &=\sum_{i=1}^m f^\theta_{n,i,\beta} &&\text{with }f^\theta_{n,i,\beta} \in \bigcap_{k=1}^q\Gamma_1(c_{i,k}(n,\theta), \alpha_{i,k}(\theta), L),\\
			\frac{1}{f_n^\theta} &= \sum_{k=1}^q \ov f_{n,k}^\theta &&\text{with }\ov f_{n,k}^\theta \in \bigcap_{i=1}^m \Gamma_1 ( 1/\ov c_{i,k}(n,\theta), -\alpha_{i,k}(\theta),L  ),
		\end{aligned}
		\label{eqn:f-form}
	\end{equation}
	for some $L:(0,\infty)\to(0,\infty)$, and the coefficients satisfy the following properties:
	For any $\eta>0$, there exist    $K=K(\eta)\in(0,\infty)$ and an open neighborhood $B(\eta)\subset \Theta$ of $\theta_0$ such that for all $i=1,\dots,m$, $k=1,\dots,q$, $n\in\N$, and $\theta,\theta'\in B(\eta)$, we have
	\begin{equation}\label{eq:cond2.1} 
		-1<\al_{i,k}(\theta)<1,\quad |\alpha_{i,k}(\theta)-\alpha_{i,k}(\theta')| \leq \eta,\quad	\frac{c_{i,k}(n,\theta)}{\ov c_{i,k}(n,\theta)} \leq Kn^\eta,\quad\frac{c_{i,k}(n,\theta)}{c_{i,k}(n,\theta')} \leq K n^\eta.
	\end{equation}
\end{assumption}

If $m=q=1$ and $c_{i,k}(n,\theta) = c_{i,k,\beta}(n,\theta) = 1$, we recover Assumptions 1--3 of \cite{Lieberman2012} (augmented by the conditions from  \cite{takabatake_note_2022}). 
Condition \eqref{eqn:f-form} is met, for instance, by functions satisfying $f_n^\theta(\lambda)\approx (\sum_{i=1}^m a_{i}(n) |\lambda|^{\alpha_i})/(\sum_{k=1}^q b_{k}(n) |\lambda|^{\gamma_k})$ for $\lambda$ close to zero and $a_i(n),b_i(n)>0$, in which case we have $\alpha_{i,k}(\theta)=\gamma_k-\alpha_i$ and $\ov c_{i,k}(n,\theta) = a_i(n)/b_k(n)$. We encounter a case where $q=2$ and $m=1$ in 
 Corollary \ref{cor:2} below, while the case  $q=1$ and $m=3$ arises in both Corollaries~\ref{cor:1} and \ref{cor:3}.

The first condition in \eqref{eq:cond2.1} ensures  that $f_n^\theta$ is integrable around zero, while the other three assert some forms of boundedness and continuity of the coefficients and exponents of $f_n^\theta$ and its derivatives. Assumption~\ref{ass:gauss-lan-2}, albeit quite general already, is too restrictive for many interesting situations (including those discussed in Corollaries~\ref{cor:1} and \ref{cor:3}), as the spectral densities in these cases  fail to be uniformly integrable and violate the requirement from \eqref{eq:cond2.1} that $\al_{i,k}(\theta)<1$ for all $i$ and $k$. To cover such processes, we present  an extension of Assumption~\ref{ass:gauss-lan-2} that allows for exponents bigger than $1$ in Appendix~\ref{app:reg}.

We are now in the position to state the main theorem of the paper, which generalizes the results of  \cite{hannan_asymptotic_1973}, \cite{Fox1986}, \cite{K87}, \cite{Dahlhaus1989}, \cite{Lieberman2012} and \cite{Cohen2013} to a triangular array setting. 

\begin{theorem}\label{thm:LAN}
	Suppose that Assumption \ref{ass:gauss-lan-1} and either Assumption \ref{ass:gauss-lan-2} or Assumption~\ref{ass:gauss-lan-2-ext} from Appendix~\ref{app:reg} hold for $\theta_0\in\Theta$.
	Then there exists a sequence of random vectors $\xi_n(\theta_0)$ such that  under $\P^n_{\theta_0}$,  
	\begin{equation}\label{eq:thm.1} \begin{split}
			&\xi_n(\theta_0) \stackrel{d}{\longrightarrow} \mathcal{N}(0, I(\theta_0)),\\	&\sup_{a\in\mathcal{H}}\left|l_n(\theta_0 + R_n a) - l_n(\theta_0) - \langle a, \xi_n(\theta_0)\rangle + \frac{1}{2} a^\T I(\theta_0) a \right| \stackrel{\P}{\longrightarrow} 0\end{split}
	\end{equation}
	for any compact set $\mathcal{H}\subset \R^M$.
	Hence, the model $\P^n_\theta$ is LAN in $\theta_0$ with rate matrix $R_n$ and asymptotic Fisher information matrix $I(\theta_0)$.
	Moreover, there exists a sequence of random vectors $\widehat{\theta}_n\in\Theta$ satisfying $\P^n_{\theta_0}(\nabla l_n(\widehat{\theta}_n)=0)\to 1$ such that   under $\P^n_{\theta_0}$ and as $n\to\infty$,
	\begin{equation}\label{eq:thm.2} 
		R_n^{-1} (\widehat{\theta}_n - \theta_0)  \stackrel{d}{\longrightarrow}  \mathcal{N}(0, I(\theta_0)^{-1}).
	\end{equation}
	The estimator $\widehat\theta_n$ is asymptotically efficient in the sense of \eqref{eqn:Lecam}. 
\end{theorem}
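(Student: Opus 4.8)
The plan is to pass to the local coordinate $a$ and Taylor-expand the log-likelihood to third order. Writing $\Sigma_n = T_n(f_n^{\theta_0})$ and using $\partial_j T_n(f_n^\theta) = T_n(\partial_j f_n^\theta)$, the score is the centered Gaussian quadratic form
\begin{equation*}
	\nabla l_n(\theta_0) = \frac12\Bigl( X_n^\T A_n^{(j)} X_n - \tr\bigl(A_n^{(j)}\Sigma_n\bigr)\Bigr)_{j=1}^M, \qquad A_n^{(j)} = \Sigma_n^{-1} T_n(\partial_j f_n^{\theta_0})\,\Sigma_n^{-1},
\end{equation*}
whose covariance under $\P^n_{\theta_0}$ is exactly $\widetilde I_n(\theta_0)$. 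I define $\xi_n(\theta_0) = R_n^\T \nabla l_n(\theta_0)$ and set $\ell_n(a) = l_n(\theta_0 + R_n a)$, so that Taylor's theorem gives
\begin{equation*}
	\ell_n(a) - \ell_n(0) = \langle a, \xi_n(\theta_0)\rangle + \tfrac12 a^\T\bigl(R_n^\T D^2 l_n(\theta_0) R_n\bigr) a + \mathcal R_n(a),
\end{equation*}
with $\mathcal R_n(a)$ the third-order remainder evaluated at some $\theta_0 + t R_n a$, $t\in(0,1)$. Establishing \eqref{eq:thm.1} thus reduces to three statements: $\xi_n(\theta_0)\wconv\caln(0,I(\theta_0))$, $R_n^\T D^2 l_n(\theta_0) R_n \plim -I(\theta_0)$, and $\sup_{a\in\mathcal H}|\mathcal R_n(a)|\plim 0$.

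For the central limit theorem I would first show $R_n^\T\widetilde I_n(\theta_0)R_n\to I(\theta_0)$. This is where the main work sits: the exact trace-based information $\widetilde I_n$ must be replaced by the integral functional $I_n$, after which \eqref{eq:cond1.1} yields the limit. The comparison of $\tr\bigl(T_n(f)^{-1}T_n(g)T_n(f)^{-1}T_n(h)\bigr)$ with $\frac{n}{2\pi}\int f^{-2}gh$ is exactly what the Toeplitz approximation lemmas of Appendix~\ref{app:technical} provide, and the power-law decompositions \eqref{eqn:f-form} together with the coefficient bounds \eqref{eq:cond2.1} are what make these approximations hold uniformly in $n$ even when $f_n^{\theta_0}$ is not uniformly integrable. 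Given the variance limit, asymptotic normality follows from a cumulant criterion: for the linear combination $\langle u,\xi_n(\theta_0)\rangle$ the $p$-th cumulant is proportional to $\tr\bigl(B_n^p\bigr)$ with $B_n = \Sigma_n^{-1}T_n(\langle R_n u,\nabla f_n^{\theta_0}\rangle)$, so it suffices to verify that these higher-order traces are negligible, which reduces again to operator-norm and trace estimates furnished by the same lemmas and by \eqref{eq:cond1.2}.

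The Hessian is handled analogously. Differentiating the score once more and taking expectation under $\P^n_{\theta_0}$, the second-derivative Toeplitz terms $T_n(\partial_{jk}f_n^{\theta_0})$ cancel and one is left exactly with $-\widetilde I_n(\theta_0)$; the random fluctuation is a centered quadratic form whose variance—another trace of Toeplitz products—vanishes after conjugation by $R_n$, using the $D^2 f_n^\theta$ part of \eqref{eq:cond1.2}. This gives $R_n^\T D^2 l_n(\theta_0) R_n\plim -I(\theta_0)$. The remainder $\mathcal R_n(a)$ contracts $\partial_\beta l_n$ with $|\beta|=3$ against three copies of $R_n a$; here the decay $n^\eps\|R_n\|\to 0$ dominates the polynomial growth of the third-derivative trace terms, while the order-$3$ decompositions in Assumption~\ref{ass:gauss-lan-2} give the bounds uniformly over the intermediate points $\theta_0 + t R_n a$ as $a$ ranges over the compact set $\mathcal H$. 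I expect the uniformity of all Toeplitz estimates across a neighborhood of $\theta_0$—rather than at a single parameter value—to be the main obstacle, since every constant must be controlled simultaneously in $\theta$ and in the array index $n$; this is precisely the role of the continuity and ratio conditions in \eqref{eq:cond2.1} and of the $n^{-\delta}$-uniform version of \eqref{eq:cond1.1}.

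Finally, the MLE statements follow from the LAN expansion by standard arguments. The approximating quadratic $a\mapsto\langle a,\xi_n(\theta_0)\rangle-\tfrac12 a^\T I(\theta_0)a$ is maximized at $I(\theta_0)^{-1}\xi_n(\theta_0)$; since \eqref{eq:thm.1} holds uniformly on compacts and $I(\theta_0)$ is invertible, a standard argmax/degree argument produces a local maximizer $\widehat a_n$ of $\ell_n$ with $\nabla\ell_n(\widehat a_n)=0$ on an event of probability tending to one and $\widehat a_n = I(\theta_0)^{-1}\xi_n(\theta_0)+o_{\P}(1)$. Setting $\widehat\theta_n=\theta_0+R_n\widehat a_n$ then yields $R_n^{-1}(\widehat\theta_n-\theta_0)=\widehat a_n\wconv\caln(0,I(\theta_0)^{-1})$, which is \eqref{eq:thm.2}, and asymptotic efficiency in the sense of \eqref{eqn:Lecam} is an immediate consequence of LAN via the local asymptotic minimax theorem (Section~II.12 of \cite{IH81}).
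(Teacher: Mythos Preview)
Your outline is correct and would yield the theorem, but it differs from the paper's proof in two structural choices.

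First, you Taylor-expand to third order, treating the Hessian at the fixed point $\theta_0$ and pushing the uniformity-in-$a$ issue into the cubic remainder $\mathcal R_n(a)$. The paper instead stops at second order in mean-value form, writing $l_n(\theta_0+R_na)-l_n(\theta_0)=a^\T R_n^\T\nabla l_n(\theta_0)+\tfrac12 a^\T R_n^\T D^2 l_n(\theta_0+zR_na)R_na$, and then proves that $R_n^\T D^2 l_n(\theta_0+\log(n)R_n b)R_n\to -I(\theta_0)$ \emph{uniformly} over $b$ in compacts. This uniform Hessian convergence is obtained via a chaining argument (Theorem~\ref{thm:CLT}~(iii)), which is where the third derivatives of $f_n^\theta$ and the continuity bounds in \eqref{eq:cond2.1} actually enter. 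Your route and the paper's route use the same raw ingredients (control of $\partial_\beta f_n^\theta$ up to $|\beta|=3$, uniformly in a shrinking neighborhood), but the paper's packaging has one advantage: the uniform statement $\sup_b\|R_n^\T D^2 l_n(\theta_0+R_nb)R_n + I(\theta_0)\|\to 0$ is exactly what the estimating-equation argument for $\widehat\theta_n$ needs, so it is proved once and reused, whereas your scheme proves the Hessian limit at $\theta_0$ and the third-order bound separately, and would then still need something like the uniform Hessian limit for the MLE step.

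Second, for the CLT on $\xi_n(\theta_0)$ you invoke a cumulant criterion, showing $\tr(B_n^p)\to 0$ for $p\geq 3$. The paper instead diagonalizes the symmetric matrix $A_n(\theta_0)=\Sigma_n^{1/2}\Sigma_n^{-1}T_n(g)\Sigma_n^{-1}\Sigma_n^{1/2}$ to write $Z_n(\theta_0)=\sum_k(\epsilon_k^2-1)a_k$ and applies Lindeberg, checking $\phi_n^{-1}\max_k a_k\to 0$ and $2\phi_n^{-2}\sum_k a_k^2\to 1$. The two approaches are essentially equivalent here: the key estimate in both is $\|A_n(\theta_0)\|_{\op}=\max_k|a_k|\to 0$ (from Lemma~\ref{lem:toeplitz-norm}) combined with $\|A_n(\theta_0)\|_F^2=\tr(A_n(\theta_0)^2)\to \phi_n(\theta_0)^2/2$ (from Theorem~\ref{thm:toeplitz-inverse}). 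The eigenvalue route additionally delivers the sub-exponential bound $\|Z_n(\theta_0)\|_{\psi_1}\leq K\|A_n(\theta_0)\|_F$ used in the chaining step, which your cumulant sketch does not mention.
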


The proof is given in Appendix~\ref{sec:proofs}. It uses a lemma concerning Toeplitz matrices that was first stated and proved in \cite{taqqu_toeplitz_1987} (p.~80) and \cite{Dahlhaus1989} (Lemma~5.3). This lemma, however, is erroneous, leaving a gap also in the proof of the results for the classical time series setting obtained by \cite{Lieberman2012} and \cite{Cohen2013}. We close this gap by providing a corrected version in Lemma~\ref{lem:toeplitz-norm-0} of the appendix.

\section{Applications}\label{sec:ex}

We illustrate the use of Theorem~\ref{thm:LAN} in three different situations where triangular arrays of time series naturally arise. In all cases, Theorem~\ref{thm:LAN} allows us to derive new results on the asymptotics of the MLE and its efficiency. All proofs appear in Appendix~\ref{sec:examples}. 

\subsection{High-Frequency Asymptotics}

The first situation concerns estimation of a Gaussian process under an infill asymptotic setup. 
We have an array of time series in this setting
because the number of observations increases to infinity on finite time interval.
For illustration purposes, we consider estimation of a  \emph{mixed fractional Brownian motion} based on high-frequency observations
\begin{equation}\label{eq:X2} 
	X_{i\Den}= \si_1 B^{H_1}_{i\Den} + \si_2 B^{H_2}_{i\Den},\quad  i=0,\dots, \lfloor T/\Den \rfloor,
\end{equation}
where $B^{H_1}$ and $B^{H_2}$ are two independent fractional Brownian motions with Hurst parameters $H_1$ and $H_2$, respectively. The unknown parameter in this model is   $\theta=(H_1,\si_1^2, H_2, \si_2^2)^\T$  and we assume that it belongs to
\begin{equation}\label{eq:Theta2} 
	\Theta=\Bigl\{ (H_1,\si_1^2, H_2, \si_2^2)^\T : 0<H_1<H_2<1,\ H_2-H_1 <\tfrac14,\ \si_1^2,\si_2^2\in(0,\infty) \Bigr\}.
\end{equation}
The length of the interval, $T$, is fixed.
To ease notation, we denote the true value of $\theta$ by $\ov\theta=(\ov H_1,\ov\si_1^2,\ov H_2,\ov \si_2^2)^\T$, instead of adding a subscript $0$.


Model \eqref{eq:X2} was introduced by \cite{Cheridito2001} and is called a \emph{mixed fractional Brownian motion}; see also \cite{chong_when_2022,chong_rate-optimal_2022} and \cite{mies_estimation_2023} for extensions of this model. 
If $  H_2-H_1  >\frac14$, neither $H_2$ nor $\si_2^2$ can be consistently estimated from high-frequency data on a finite time interval (\cite{VanZanten2007}). This is similar to the fact that a drift cannot be consistently estimated on a finite interval in the presence of a diffusive component   and the reason why we require $H_2-H_1<\frac14$ in \eqref{eq:Theta2}. 


\begin{corollary}\label{cor:2}
	For any $\ov\theta\in\Theta$, the conclusions of Theorem~\ref{thm:LAN} hold true for the distribution $P^n_\theta$ of $(\Delta^n_i X)_{i=1}^{\lfloor T/\Den\rfloor} $ with
	\begin{equation}\label{eq:R1}
		R_n	=\begin{pmatrix} R_n(\ov\si^2_1)&0\\ 0 & R_n(\ov\si^2_2)\Den^{-2(\ov H_2-\ov H_1)} \end{pmatrix}, \quad R_n(\si^2)=\begin{pmatrix} \Den^{1/2} & 0   \\ 2 \si^2 \Den^{1/2}\log \Den^{-1}&\Den^{1/2}  \end{pmatrix}
	\end{equation}
	and  
	\begin{equation}\label{eq:I1} 
			I(\theta)=\frac{T}{4\pi\si_1^4}\int_{-\pi}^\pi  \begin{pmatrix}v_{\si^2_1, H_1}(\la) \\ v_{\si^2_2, H_2}(\la) \end{pmatrix} \begin{pmatrix}v_{\si^2_1, H_1}(\la) \\ v_{\si^2_2, H_2}(\la) \end{pmatrix}^\T f_{ H_1}(\la)^{-2}\,d\la, \quad v_{\si^2,H}(\la) = \begin{pmatrix} \si^2\partial_Hf_H(\la) \\ f_H(\la)\end{pmatrix},
	\end{equation}
where $f_H$ is the spectral density of the increments of fractional Brownian motion:
	\begin{equation}\label{eq:fH} 
	f_H(\la) = C_H(1-\cos(\la))\sum_{k\in\Z} \lvert 2\pi k+\la\rvert^{-1-2H},\quad 	C_H=\frac{\Ga(2H+1)\sin(\pi H)}{\pi}.
\end{equation}
\end{corollary}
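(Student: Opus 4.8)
The plan is to verify that Corollary~\ref{cor:2} follows from Theorem~\ref{thm:LAN} by checking its hypotheses for the specific model \eqref{eq:X2}. The starting point is the spectral density of the increment process $(\Delta^n_i X)_i$. Since the increments of each $\si_j B^{H_j}$ on a grid of mesh $\Den$ form a stationary Gaussian sequence with spectral density proportional to $\si_j^2 \Den^{2H_j} f_{H_j}(\la)$, and the two fBms are independent, the increment process $(\Delta^n_i X)_i$ has spectral density
\begin{equation}\label{eq:spec-sum}
f_n^\theta(\la) = \si_1^2 \Den^{2H_1} f_{H_1}(\la) + \si_2^2 \Den^{2H_2} f_{H_2}(\la),
\end{equation}
with $f_H$ as in \eqref{eq:fH}. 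First I would establish the small-$\la$ behavior of $f_H$, namely $f_H(\la) \sim C_H' |\la|^{1-2H}$ as $\la\to 0$ together with matching bounds on its $\la$-derivatives and its $H$-derivatives; the logarithmic factor $\log\Den^{-1}$ appearing in \eqref{eq:R1} arises precisely from differentiating $\Den^{2H}$ in $H$, which produces a factor $2\log\Den$. This motivates the nondiagonal structure of $R_n(\si^2)$, whose off-diagonal entry is designed to cancel the leading $\log\Den$ contribution so that the rescaled information matrix converges.

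Next I would verify Assumption~\ref{ass:gauss-lan-2} (the regularity assumption), which is the bookkeeping core. Because $1-2H \in (-1,1)$ for $H\in(0,1)$, each summand $\Den^{2H_j} f_{H_j}$ has the right power-law form, so the numerator decomposition in \eqref{eqn:f-form} holds with $m=1$ and $q=2$: the two terms in the denominator come from writing $1/f_n^\theta$ against the two competing power laws $|\la|^{1-2H_1}$ and $|\la|^{1-2H_2}$ near the origin, giving exponents $\al_{i,k}(\theta) = (1-2H_k)-(1-2H_i)=2(H_i-H_k)$. The constraint $H_2-H_1<\tfrac14$ in \eqref{eq:Theta2} is exactly what keeps these exponents strictly inside $(-1,1)$, so the first condition in \eqref{eq:cond2.1} is met; the continuity and ratio bounds in \eqref{eq:cond2.1} follow from smoothness of $H\mapsto C_H$ and from the fact that the $n$-dependence enters only through the explicit deterministic factors $\Den^{2H_j}$.

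The heart of the argument, and the step I expect to be the main obstacle, is verifying Assumption~\ref{ass:gauss-lan-1}, specifically the convergence \eqref{eq:cond1.1} of $R_n^\T I_n(\theta) R_n$ to the stated nondegenerate limit $I(\theta)$ in \eqref{eq:I1}. One must compute $I_n(\theta) = \frac{n}{4\pi}\bigl(\int \partial_j f_n^\theta \partial_k f_n^\theta / (f_n^\theta)^2\bigr)_{j,k}$ with $n=\lfloor T/\Den\rfloor\asymp T\Den^{-1}$, and track how the widely differing scales $\Den^{2H_1}$ and $\Den^{2H_2}$ interact. The delicate point is that, without rescaling, the block corresponding to $(H_2,\si_2^2)$ degenerates at rate $\Den^{2(H_2-H_1)}$ relative to the $(H_1,\si_1^2)$ block — which is why $R_n$ contains the factor $\Den^{-2(\ov H_2-\ov H_1)}$ — and simultaneously the $\partial_H$ derivatives carry the $\log\Den^{-1}$ divergence that the off-diagonal entries of $R_n(\si^2)$ are tuned to absorb. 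After substituting \eqref{eq:spec-sum} and the leading-order asymptotics $f_n^\theta(\la)\approx \si_1^2\Den^{2H_1} f_{H_1}(\la)$ (valid since the $H_1$-term dominates near the origin), the integrals should, after the $R_n$-conjugation, converge to the frequency integral in \eqref{eq:I1}; here I would argue that the $\Den$-dependent prefactors cancel and the $\log\Den^{-1}$ terms are annihilated by the design of $R_n$, leaving the $\theta$-uniform limit required over the shrinking neighborhood $\|\theta-\theta_0\|\le n^{-\delta}$. Finally I would confirm the auxiliary bound $n^\eps\|R_n\|\to 0$ and the higher-derivative integrability \eqref{eq:cond1.2}, both of which reduce to the same power-counting in $\Den$, and then invoke Theorem~\ref{thm:LAN} to conclude the LAN property, the limit law \eqref{eq:thm.2} of the MLE, and its asymptotic efficiency.
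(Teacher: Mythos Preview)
Your overall strategy matches the paper's: write down the spectral density as the sum of two fBm contributions, verify Assumption~\ref{ass:gauss-lan-2} via power-law bounds near $\la=0$, verify Assumption~\ref{ass:gauss-lan-1} by exploiting the $\log\Den^{-1}$ cancellation built into $R_n$, and invoke Theorem~\ref{thm:LAN}. But two details are off and would derail the execution.

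First, the decomposition for Assumption~\ref{ass:gauss-lan-2} should use $m=2$, $q=1$, not the reverse. The spectral density is naturally a sum $f_n^\theta=f^\theta_{n,1}+f^\theta_{n,2}$ with $f^\theta_{n,i}=\si_i^2\Den^{2H_i}f_{H_i}$, while $1/f_n^\theta$ is a \emph{single} function satisfying two bounds via the trivial inequality $1/f_n^\theta\le 1/f^\theta_{n,i}$ for $i=1,2$ (both summands are positive). The exponents are then $\al_{i,1}(\theta)=2H_i-1$, which lie in $(-1,1)$ automatically for $H_i\in(0,1)$; your formula $\al_{i,k}=2(H_i-H_k)$ does not fit the structure of \eqref{eqn:f-form} and conflates the exponents with their differences.

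Second, the constraint $H_2-H_1<\tfrac14$ is \emph{not} what keeps the exponents in $(-1,1)$; that holds regardless. It is needed in Assumption~\ref{ass:gauss-lan-1}: when you control $R_n^\T I_n(\theta)R_n - I(\ov\theta)$ and the integrals in \eqref{eq:cond1.2}, the error integrands behave like $|\la|^{-4(\ov H_2-\ov H_1)}$ (times logarithmic factors), and integrability on $(-\pi,\pi)$ forces $4(\ov H_2-\ov H_1)<1$. This is where you should expect the condition to bite, so plan your Assumption~\ref{ass:gauss-lan-1} computation around it.

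Finally, you do not mention checking that $I(\ov\theta)$ is invertible. The paper handles this separately by observing that the four functions $\si_1^2\partial_Hf_{\ov H_1}/f_{\ov H_1}$, $1$, $\si_2^2\partial_Hf_{\ov H_2}/f_{\ov H_1}$, $f_{\ov H_2}/f_{\ov H_1}$ have pairwise distinct asymptotics as $\la\to0$ and are therefore linearly independent in $L^2$.
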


As in \cite{brouste2018local}, it is important to choose a nondiagonal rate matrix in this example to guarantee that the limiting Fisher information is regular and that the LAN property holds. The corollary also shows that the estimators considered in \cite{chong_when_2022} in a semiparametric setting that corresponds to $H_1<\frac12$ and $H_2=\frac12$ are rate-optimal in the special case of a mixed fractional Brownian motion.


\subsection{High-Frequency and Long-Span Asymptotics}

Next, we demonstrate how Theorem~\ref{thm:LAN} also applies  under joint high-frequency and long-span asymptotics. To be   specific, we consider the  stationary  solution to the fractional OU equation
	\begin{equation}\label{eq:OU} 
	dX_t = -\kappa X_t dt + \si dB^H_t,\quad 
\end{equation}
where $\theta=(\kappa,H,\si)^\T \in \Theta= (0,1)\times(0,\infty)^2$, $B^H$ is a fractional Brownian motion with Hurst parameter $H$ and $X_0\sim N(0,\frac{\si^2}{\kappa^{2H}}H\Ga(2H))$. We assume that we observe $X_{i\Delta_n}$, $i = 0,\dots, \lfloor T/\Den\rfloor$, where $\Delta_n \to0$ and $T=T_n\sim C\Den^{-\beta}\to\infty$ as $n\to\infty$ for some $C,\beta>0$.
Fractional OU processes have been considered in \cite{WXY23,WXYZ24} and \cite{SYZ24}, among others, for the purpose of modeling volatility in continuous time.

\begin{corollary}\label{cor:1} If $\theta_0\in\Theta$ and $1+\frac14\beta -5H+2H^2>0$, the conclusions of Theorem~\ref{thm:LAN} hold true for the distribution $\P^n_\theta$ of $(X_{i\Den})_{i=1}^{\lfloor T/\Den\rfloor}$ with
	\begin{equation}\label{eq:R2}\begin{split}
			R_n	&= \begin{pmatrix}  T^{-1/2} &0&0\\ 0& (T/\Den)^{-1/2} & 0 \\ 0 & 2\si^2(T/\Den)^{-1/2}\log \Den^{-1} & (T/\Den)^{-1/2} \end{pmatrix},
	\\
		I(\theta)&= \begin{pmatrix} 1/(2\kappa) & 0\\
			0&\frac1{4\pi}\displaystyle\int_{-\pi}^\pi \begin{pmatrix} F_H(\la)^2 & F_H(\la)\si^{-2}\\ F_H(\la)\si^{-2}&\si^{-4}\end{pmatrix}d\la  \end{pmatrix},\end{split}
	\end{equation}
where
\begin{equation}\label{eq:FG} 
	F(H)=\frac{C'(H)}{C(H)} +\frac{2\log \lvert\la\rvert^{-1}+2\lvert \la\rvert^{2H+1}\partial_H G_H(\la)}{1+2\lvert \la\rvert^{2H+1}G_H(\la)},\quad G_H(\la) = \sum_{k=1}^\infty \lvert \la + 2\pi k\rvert^{-1-2H}
\end{equation}
and the constants are given by $C(H)=\Ga(2H+1)\sin(\pi H)$ and $C'(H) = \partial_H C(H)$.
\end{corollary}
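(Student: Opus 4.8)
The strategy is to verify the hypotheses of Theorem~\ref{thm:LAN}: Assumption~\ref{ass:gauss-lan-1} together with the \emph{extended} regularity Assumption~\ref{ass:gauss-lan-2-ext}. The extension is unavoidable here because the sampled fOU process has a spectral pole of order $1+2H>1$ at the origin, so $f_n^\theta$ is not uniformly integrable and the requirement $\al_{i,k}<1$ in \eqref{eq:cond2.1} fails. The first task is a tractable description of the spectral density $f_n^\theta$ of $(X_{i\Den})_{i=1}^{\lfloor T/\Den\rfloor}$. Starting from the continuous-time fOU spectral density $S_X(\om)\propto \si^2|\om|^{1-2H}/(\kappa^2+\om^2)$ and applying the aliasing formula $f_n^\theta(\la)=\Den^{-1}\sum_{k\in\Z}S_X((\la+2\pi k)/\Den)$, I would expand for small $\Den$ to obtain the leading behaviour
\begin{equation*}
f_n^\theta(\la)\;\approx\;\si^2 C_H\,\Den^{2H}\,|\la|^{-1-2H}\bigl(1+2|\la|^{2H+1}G_H(\la)\bigr),
\end{equation*}
i.e.\ at high frequencies the shape is, up to the scalar $\si^2\Den^{2H}$, the fractional-Gaussian-noise kernel entering $F_H$ in \eqref{eq:FG}, while $\kappa$ contributes only through the subleading factor $(\kappa^2\Den^2+\la^2)^{-1}$ carried by the $k=0$ term. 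The key computation is that $\partial_H\log f_n^\theta(\la)=2\log\Den+F_H(\la)+o(1)$ uniformly in $\la\in\La$ and in $\theta$ near $\theta_0$, while $\partial_{\si^2}\log f_n^\theta(\la)=\si^{-2}$ exactly (the density being linear in $\si^2$; here the diffusion parameter enters through $\si^2$). The divergent term $2\log\Den=-2\log\Den^{-1}$ in the $H$-score is precisely what necessitates the nondiagonal rate matrix in \eqref{eq:R2}.

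With this expansion in hand, I would establish the extended regularity decomposition by writing $f_n^\theta$, its derivatives up to order three, and $1/f_n^\theta$ as sums of functions in the classes $\Gamma_1(c,\al,L)$ (here with $q=1$ and $m=3$, as anticipated in the text after Assumption~\ref{ass:gauss-lan-2}): the leading term carries the pole exponent $\al=1+2H$, while the regular aliasing tail $\sum_{k\ne0}|\la+2\pi k|^{-1-2H}$ and the mean-reversion correction supply the remaining terms with nonpositive exponents. Verifying the coefficient and continuity conditions of \eqref{eq:cond2.1} in their extended form is then largely bookkeeping, the $\si^2$- and $\Den^{2H}$-dependence being absorbed into the coefficients $c_{i,k}(n,\theta)$; the only structural novelty is $\al>1$, which is exactly what the extension in Appendix~\ref{app:reg} permits.

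The core of the argument is the information condition \eqref{eq:cond1.1}, namely $R_n^\T I_n(\theta)R_n\to I(\theta_0)$ uniformly over $\|\theta-\theta_0\|\le n^{-\delta}$, with $n=\lfloor T/\Den\rfloor$ the number of observations and $I_n(\theta)=\frac{n}{4\pi}\bigl(\int_{-\pi}^\pi \partial_j\log f_n^\theta\,\partial_k\log f_n^\theta\,d\la\bigr)_{j,k}$. I would treat the $\kappa$-block and the $(H,\si^2)$-block separately. For the latter, inserting $\partial_H\log f_n^\theta=2\log\Den+F_H+o(1)$ and $\partial_{\si^2}\log f_n^\theta=\si^{-2}$ and conjugating by the lower-right block of $R_n$, the off-diagonal entry $2\si^2(T/\Den)^{-1/2}\log\Den^{-1}$ is designed to cancel the divergent $2\log\Den$ in the $H$-score, leaving the finite integrand $\bigl(\begin{smallmatrix}F_H^2 & F_H\si^{-2}\\ F_H\si^{-2}&\si^{-4}\end{smallmatrix}\bigr)$ together with $n\,(T/\Den)^{-1}\to1$. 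For the $\kappa$-block, the score $\partial_\kappa\log f_n^\theta\approx -2\kappa\Den^2/(\kappa^2\Den^2+\la^2)$ concentrates near discrete frequencies $|\la|\sim\kappa\Den$, and the substitution $\la=\kappa\Den u$ turns $\tfrac{n}{T}\int(\partial_\kappa\log f_n^\theta)^2\,d\la$ into $\tfrac{1}{\Den}\cdot\tfrac{2\pi\Den}{\kappa}\to\tfrac{2\pi}{\kappa}$, so that after the $T^{-1/2}$ scaling one recovers the classical continuous-time OU information $1/(2\kappa)$; this is the only place where the long span $T=T_n\to\infty$ enters essentially. Finally I would show that the mixed $\kappa$-versus-$(H,\si^2)$ entries vanish, reflecting the asymptotic orthogonality of the slow (long-span) and fast (high-frequency) directions and giving the block-diagonal $I(\theta)$ of \eqref{eq:R2}.

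It remains to verify the integrability bound \eqref{eq:cond1.2}, and this is where I expect the main obstacle and where the hypothesis $1+\tfrac14\beta-5H+2H^2>0$ is consumed. After conjugation by $R_n$, the second-derivative term $\|\partial_\beta R_n^\T D^2 f_n^\theta\|^2/(f_n^\theta)^2$ combines powers of $|\la|$ (from differentiating the pole $|\la|^{-1-2H}$ and from the $\la\,h'$ structure built into $\Gamma_1$) with factors $\log\Den^{-1}$ and the scalings $\Den^{2H}$, $T^{-1/2}$, $(T/\Den)^{-1/2}$; balancing the resulting exponent of $|\la|$ against the weight $|\la|^{-\eta}$ for integrability near zero, and the powers of $\Den$ and $T$ (via $T\sim C\Den^{-\beta}$) against the target order $Kn^{-1+\eta}$, produces exactly the stated inequality as the condition for the weighted integral of the squared second derivative to be finite and of the correct order. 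The delicacy is twofold: because $f_n^\theta$ is not uniformly integrable, all small-$\la$ estimates must be uniform in $\la$ and $\Den$ simultaneously, and one must track how the divergent $\log\Den^{-1}$ interacts with the nondiagonal scaling without destroying the bound. Once \eqref{eq:cond1.1}, \eqref{eq:cond1.2} and the regularity decomposition are in place, Theorem~\ref{thm:LAN} applies and yields the LAN property, the limit law \eqref{eq:thm.2}, and the efficiency of the MLE.
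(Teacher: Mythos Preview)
Your overall strategy---verify Assumption~\ref{ass:gauss-lan-1} and the extended Assumption~\ref{ass:gauss-lan-2-ext}, with the nondiagonal rate cancelling the $-2\log\Den^{-1}$ in the $H$-score, the $\kappa$-block handled by the substitution $\la=\Den\nu$, and the mixed $\kappa$-versus-$(H,\si^2)$ entries shown to vanish---matches the paper. But you have misidentified both the structure of the regularity decomposition and, more seriously, the place where the hypothesis $1+\tfrac14\beta-5H+2H^2>0$ is actually consumed.

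First, the decomposition uses $m=1$ and $q=3$, not $m=3$ and $q=1$ (the sentence after Assumption~\ref{ass:gauss-lan-2} that you are quoting has $m$ and $q$ interchanged relative to the proofs). The point is not to split $f_n^\theta$ into three additive pieces but to bound the \emph{single} function $f_n^\theta$ by three different power laws: the two obvious ones with exponents $2H-1$ and $2H+1$ coming from $\kappa^2\Den^2+\la^2\geq\kappa^2\Den^2$ and $\kappa^2\Den^2+\la^2\geq\la^2$, and a third \emph{interpolated} bound with exponent $2H+1-2z$ for a free parameter $z\in(H,1)$. This interpolation trick (borrowed from the proof of Corollary~\ref{cor:3}) is what allows you to choose $k^\ast$ with $\al_{1,k^\ast}<1$ once the natural exponent $2H+1$ exceeds $1$; without it you cannot satisfy the third line of \eqref{eq:cond2.1-2}. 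Describing the regularity verification as ``largely bookkeeping'' misses the one genuine idea in it.

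Second, the hypothesis $1+\tfrac14\beta-5H+2H^2>0$ is \emph{not} consumed by the integrability bound \eqref{eq:cond1.2}. In the paper, \eqref{eq:cond1.2} follows from the elementary estimate $|\partial_\beta f_n^\theta(\la)|\leq K\Den^{2H}|\la|^{1-2H}(\log\Den^{-1})^3(\log_\vee|\la|^{-1})^3/(\kappa^2\Den^2+\la^2)$ with no restriction on $H$ or $\beta$. The hypothesis enters instead through the \emph{last} condition of \eqref{eq:cond2.1-2}, namely $c_{i,k^\ast}(n,\theta)/\ov c_{i^\ast,k}(n,\theta)\leq K\lVert R_n\rVert^{-1/2+\iota}$. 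With the interpolated coefficient $c_{1,3}\propto\Den^{1-3z+2Hz}$, the reciprocal coefficients $\ov c_{1,2}=\ov c_{1,3}\propto\Den^{2H}$, and $\lVert R_n\rVert=O(T^{-1/2})=O(\Den^{\beta/2})$, this ratio condition becomes $\Den^{1-3z+2Hz-2H}\leq K\Den^{-\beta/4+}$; sending $z\downarrow H$ yields exactly $1-5H+2H^2+\tfrac\beta4>0$. If you look for the constraint inside \eqref{eq:cond1.2} you will not find it, and if you then treat Assumption~\ref{ass:gauss-lan-2-ext} as routine you will be unable to close the argument.
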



If $H\leq \frac{5-\sqrt{17}}4\approx 0.2192$, the condition $1+\frac14\beta -5H+2H^2>0$ is automatically satisfied for $\beta>0$. If $H>\frac{5-\sqrt{17}}4\approx 0.2192$, the condition imposes a minimum speed, relative to $\Den^{-1}$, at which the time horizon $T$ must increase to infinity. We need this condition due to the fact that even for a process as simple as a stationary fractional OU process, the spectral density under joint infill and long-span asymptotics is not uniformly integrable. Therefore, we need the extended regularity conditions of Assumption~\ref{ass:gauss-lan-2-ext} and the restriction on $T$ and $\Den$ is needed to satisfy the last condition in \eqref{eq:cond2.1-2}. At present, we do not know whether this condition is sharp in general. In the special case $H=\frac12$, 
\cite{G02} shows the LAN property with no other  restrictions on $T$ and $\Den$ than $T\to\infty$ and $\Den\to0$. The proof of \cite{G02}, however, utilizes the Markov property of the solution, which does not extend to fractional OU processes with $H\neq\frac12$. Maximum likelihood estimation for diffusion processes has also been studied by \cite{A02}.

\subsection{Sample-Size Dependent Parameter}

If the unknown parameter of a time series depends on the sample size, the resulting model is formally a time series array. We show how Theorem~\ref{thm:LAN} can be used to establish the LAN property for a class of local-to-unity processes. 
Let $(\eps_t)_{t\in\N}$ be independent standard normal random variables and suppose that 
\begin{equation}\label{eq:AR1} 
	X_t^{(n)}=(1-ca_n)X_{t-1}^{(n)} + \si\eps_t,\quad n\in\N,\ t=1,\dots,n,
\end{equation}
where $(a_n)_{n\in\N}$ is a null sequence such that $na_n\to\infty$ and $X_0^{(n)}\sim \caln(0,\frac{\si^2}{1-\phi_n^2})$ is independent of $(\eps_t)_{t\in\N}$. Here,  $\phi_n=1-ca_n$. By construction, $(X_t^{(n)})_{t\geq0}$ is a stationary $\mathrm{AR}(1)$ process with autoregressive coefficient $\phi_n$, which converges to $1$ slower than $\frac1n$. The unknown parameter in \eqref{eq:AR1} is $\theta=(c,\si^2)\in\Theta=(0,\infty)^2$ and we denote the true value of $\theta$  by $\theta_0=(c_0,\si^2_0)$.

The process $X^{(n)}_t$ is called a \emph{mildly integrated} $\mathrm{AR}(1)$ process  (see \cite{PM07} and \cite{P23}) and bridges the stationary and unit root regimes of autoregressive processes. The restriction $na_n\to\infty$ is important: if $a_n=\frac1n$, the resulting local-to-unity models have been extensively studied in the literature. In particular, it is known  in this case that $c$ cannot be consistently estimated and the MLE for $c$ is neither asymptotically normal (\cite{CW87}, \cite{P87}) nor efficient (\cite{HPS11}, \cite{P12}). If $a_n^{-1}$ increases to infinity not too fast, Assumption \ref{ass:gauss-lan-2-ext} holds and Theorem~\ref{thm:LAN} can be used to derive the LAN property and the asymptotic efficiency of the MLE.

\begin{corollary}\label{cor:3}
	Suppose that $a_n^{-1}=O(n^\al)$ for some $\al\in(0,\frac15)$. If $\theta_0\in\Theta$, the conclusions of Theorem~\ref{thm:LAN} hold true for the distribution $\P^n_\theta$ of $(X_{t}^{(n)})_{t=0}^n$, with
	\begin{equation}\label{eq:RI3}
		R_n=\begin{pmatrix} (na_n)^{-1/2} & 0\\ 0&n^{-1/2}\end{pmatrix},\qquad	I(\theta)=\begin{pmatrix}c^{-1}/2&0\\0&\si^{-4}/2\end{pmatrix}.
	\end{equation}
	The OLS estimator of $c$ from \cite{PM07} has the same asymptotic distribution as the MLE of $c$ and is therefore asymptotically efficient.
\end{corollary}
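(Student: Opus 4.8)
The plan is to verify the hypotheses of Theorem~\ref{thm:LAN}---Assumption~\ref{ass:gauss-lan-1} together with the extended regularity Assumption~\ref{ass:gauss-lan-2-ext}---for the exact Gaussian likelihood of the stationary $\mathrm{AR}(1)$ model \eqref{eq:AR1}, and then to obtain the efficiency of the OLS estimator by matching its known limit law to $I(\theta_0)^{-1}$. The starting point is the spectral density of the stationary solution,
\begin{equation*}
	f_n^\theta(\la)=\frac{\si^2}{2\pi\,\lvert 1-\phi_n \ee^{\ii\la}\rvert^2}=\frac{\si^2}{2\pi\bigl[(ca_n)^2+2\phi_n(1-\cos\la)\bigr]},\qquad \phi_n=1-ca_n.
\end{equation*}
Writing $b_n=ca_n$ and using $1-\cos\la\approx\la^2/2$, one sees that $f_n^\theta$ is essentially flat of height $\approx\si^2/(2\pi b_n^2)$ on the band $\lvert\la\rvert\lesssim b_n$ and decays like $\si^2/(4\pi\la^2)$ for $\lvert\la\rvert\gtrsim b_n$. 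Thus $f_n^\theta$ carries a moving pole of order two whose cut-off scale $b_n\sim a_n$ shrinks with $n$; in particular $\sup_\la f_n^\theta\sim a_n^{-2}=O(n^{2\al})$, the sequence fails to be uniformly integrable, and the effective exponent two forces the use of Assumption~\ref{ass:gauss-lan-2-ext} in place of Assumption~\ref{ass:gauss-lan-2}.

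For Assumption~\ref{ass:gauss-lan-1} I would first evaluate $I_n(\theta)$ from its integral form. Since $\partial_{\si^2}\log f_n^\theta=\si^{-2}$ and, using $\partial_c\phi_n=-a_n$,
\begin{equation*}
	\partial_c\log f_n^\theta(\la)=\frac{2a_n\bigl[-ca_n+(1-\cos\la)\bigr]}{(ca_n)^2+2\phi_n(1-\cos\la)},
\end{equation*}
the substitution $\la=b_n u$ turns $\partial_c\log f_n^\theta$ into $-2/[c(1+u^2)]+o(1)$, concentrated on $u=O(1)$. Hence $\int_{-\pi}^\pi(\partial_c\log f_n^\theta)^2\,d\la\approx 2\pi a_n/c$, so that $I_n(\theta)_{cc}\approx na_n/(2c)$, while $I_n(\theta)_{\si^2\si^2}=n/(2\si^4)$. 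The off-diagonal entry vanishes identically, because the identity $\int_{-\pi}^\pi\log\lvert 1-\phi_n\ee^{\ii\la}\rvert^2\,d\la=0$ makes $\int_{-\pi}^\pi\partial_c\log f_n^\theta\,d\la=0$. Scaling by $R_n=\diag((na_n)^{-1/2},n^{-1/2})$ from \eqref{eq:RI3} then gives $R_n^\T I_n(\theta)R_n\to\diag(1/(2c),1/(2\si^4))$, which is continuous in $\theta$; carried out uniformly over $\lVert\theta-\theta_0\rVert\le n^{-\delta}$ this yields \eqref{eq:cond1.1} with $I(\theta_0)=\diag(1/(2c_0),1/(2\si_0^4))$. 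For \eqref{eq:cond1.2} I would observe that every derivative $\partial_c^k\log f_n^\theta$ is likewise $O(1)$ in the scaled variable $u$ and supported on $\lvert\la\rvert\lesssim b_n$, so each weighted integral scales as $(na_n)^{-1}b_n^{1-\eta}\sim n^{-1}a_n^{-\eta}=O(n^{-1+\al\eta})$, comfortably below the required bound $Kn^{-1+\eta}$.

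The regularity Assumption~\ref{ass:gauss-lan-2-ext} is verified through the decomposition \eqref{eqn:f-form}. The reciprocal $1/f_n^\theta=\frac{2\pi}{\si^2}[(ca_n)^2+2\phi_n(1-\cos\la)]$, together with $f_n^\theta$ and its $\theta$-derivatives up to third order, expands near $\la=0$ into the finitely many power-law monomials $\la^0$ (with coefficient of order $a_n^2$), $\la^2$ and $\la^4$, realising the $q=1$, $m=3$ instance announced after \eqref{eq:cond2.1}. The exponent bound, the continuity of the $\al_{i,k}(\theta)$, and the comparability of the coefficients required in \eqref{eq:cond2.1} hold directly; the single binding requirement is the extended condition in \eqref{eq:cond2.1-2}, into which the order-two pole enters through its height $\sup_\la f_n^\theta\sim a_n^{-2}=O(n^{2\al})$. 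Demanding that this condition hold for the third-order terms is precisely what restricts $\al$ to $(0,\tfrac15)$.

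I expect the genuine obstacle to lie in this last step: the array version of the approximation $\widetilde I_n(\theta)\approx I_n(\theta)$, and of the analogous score and third-order remainder controls inside the proof of Theorem~\ref{thm:LAN}, for a spectral density with a moving, non-integrable pole. The corrected Toeplitz bound of Lemma~\ref{lem:toeplitz-norm-0} has to be applied with $\Gamma$-coefficients that grow like powers of the pole height $a_n^{-2}=O(n^{2\al})$, and it is the bookkeeping of these growing coefficients against the sample size $n$ and the scaling $(na_n)^{-1/2}$ that produces the threshold $\al<\tfrac15$. Granting Theorem~\ref{thm:LAN}, the LAN property and $R_n^{-1}(\widehat\theta_n-\theta_0)\wconv\caln(0,I(\theta_0)^{-1})$ follow immediately. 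Finally, for the OLS estimator $\widehat\phi_n=\sum_{t=1}^n X_t^{(n)}X_{t-1}^{(n)}/\sum_{t=1}^n (X_{t-1}^{(n)})^2$ one has $\widehat c_n=(1-\widehat\phi_n)/a_n$, whence $(na_n)^{1/2}(\widehat c_n-c_0)=-(n/a_n)^{1/2}(\widehat\phi_n-\phi_n)$; the moderate-deviation limit theory of \cite{PM07} gives $(n/a_n)^{1/2}(\widehat\phi_n-\phi_n)\wconv\caln(0,2c_0)$, and since $[I(\theta_0)^{-1}]_{cc}=2c_0$ the OLS statistic attains exactly the efficient limit $\caln(0,2c_0)$, establishing its asymptotic efficiency.
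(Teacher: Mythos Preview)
Your verification of Assumption~\ref{ass:gauss-lan-1} is essentially correct and in places cleaner than the paper's (the off-diagonal entry vanishing exactly via $\int_{-\pi}^{\pi}\log\lvert 1-\phi_n e^{i\la}\rvert^2\,d\la=0$ is a nice touch). The OLS comparison at the end is also right.

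The gap is in Assumption~\ref{ass:gauss-lan-2-ext}. Your claim that $f_n^\theta$ and its $\theta$-derivatives ``expand near $\la=0$ into the finitely many power-law monomials $\la^0,\la^2,\la^4$'' describes the \emph{reciprocal} $1/f_n^\theta$, which is indeed a polynomial in $1-\cos\la$; it does not describe $f_n^\theta$ itself, which behaves like $\si^2/((ca_n)^2+\la^2)$. The two obvious power-law envelopes for $f_n^\theta$ are $f_n^\theta(\la)\le K\si^2(ca_n)^{-2}$ (exponent $0$) and $f_n^\theta(\la)\le K\si^2\la^{-2}$ (exponent $2$). The second exponent exceeds $1$, so you cannot take $k^\ast=k$ there, and you need a \emph{third} bound with exponent strictly below $1$ to play the role of $\al_{i,k^\ast}$ in \eqref{eq:cond2.1-2}. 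The paper obtains this by interpolation: since $(ca_n)^{2z}\la^{2(1-z)}\le (ca_n)^2\vee\la^2$ for any $z\in[0,1]$,
\[
	f_n^\theta(\la)\le K\si^2\min\bigl((ca_n)^{-2},\ \la^{-2},\ (ca_n)^{-2z}\la^{-2(1-z)}\bigr),
\]
and one picks $z\in(\tfrac12,1)$ so that the new exponent $2(1-z)<1$. This yields $m=1$, $q=3$ in \eqref{eqn:f-form-2} (not $q=1$, $m=3$ in the sense you used), with the interpolated bound serving as $k^\ast$ for $k=2,3$.

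The constraint $\al<\tfrac15$ is not produced by ``third-order terms'' but by the last inequality in \eqref{eq:cond2.1-2} applied to this interpolated bound: one needs $c_{1,k^\ast}/\ov c_{1,k}=(ca_n)^{-2z}\le K\lVert R_n\rVert^{-1/2+\iota}$, and with $\lVert R_n\rVert\asymp(na_n)^{-1/2}$ and $a_n^{-1}=O(n^\al)$ this reads $n^{2z\al}\lesssim n^{(1-\al)/4}$, i.e., $\al<1/(8z+1)$. Letting $z\downarrow\tfrac12$ gives $\al<\tfrac15$. Without the interpolation there is no admissible $k^\ast$ with $\al_{1,k^\ast}<1$ and hence no way to satisfy \eqref{eq:cond2.1-2}; this is the missing idea in your proposal.
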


The restriction on $\al$  is a consequence of the last condition in \eqref{eq:cond2.1-2} of Assumption \ref{ass:gauss-lan-2-ext}.
If $a_n^{-1}$ increases to infinity at speed $n^{1/5}$ or faster, but slower than $n$, it remains an open question whether the LAN property holds and whether the MLE is asymptotically efficient. 



\appendix

\section{Extended Regularity Assumptions}\label{app:reg}
 
 We give an extension of Assumption~\ref{ass:gauss-lan-2} that allows for exponents bigger than $1$.
 \settheoremtag{2E}
 \begin{assumption}[Extended Regularity]\label{ass:gauss-lan-2-ext} There are $m, q\in\N$ such that 
 	\begin{equation}
 		\begin{aligned}
 			\partial_\beta f_{n}^{\theta} &=\sum_{i=1}^m  f^\theta_{n,i,\beta} &&\text{with }f^\theta_{n,i,\beta}\in  \bigcap_{k=1}^q  \Gamma_1(c_{i,k}(n,\theta), \alpha_{i,k}(\theta), L),	 \quad\text{for }|\beta|=0,1,2,3,\\
 			\frac{1}{f_n^\theta} &\in\sum_{k=1}^q \ov f^\theta_{n,k}&&\text{with } \ov f^\theta_{n,k}\in \bigcap_{i=1}^m  \Gamma_1 ( 1/\ov c_{i,k}(n,\theta), -\ov \alpha_{i,k}(\theta),L  ),
 		\end{aligned}\label{eqn:f-form-2}
 	\end{equation}
 	where   $c_{i,k}(n,\theta),\ov c_{i,k}(n,\theta)>0$, $\al_{i,k}(\theta),\ov\al_{i,k}(\theta)\in \R$ and $L:(0,\infty)\to(0,\infty)$ is a given function. We assume that there is a choice of indices $i'=i'(i,k)$,  $i^\ast=i^\ast(i,k)$,  $k'=k'(i,k)$ and  $k^\ast=k^\ast(i,k)$ with the following property:
 	For any sufficiently small $\eta\in(0,1)$ and some $r>0$ and $\iota\in(0,1)$, there exist    $K=K(\eta,r,\iota)\in (0,\infty)$ and an open neighborhood $B(\eta,r,\iota)\subset \Theta$ of $\theta_0$ such that for all $i=1,\dots,m$, $k=1,\dots,q$, $n\in\N$, $\lvert\beta\rvert=1,2,3$ and $\theta,\theta'\in B(\eta,r,\iota)$, we have  
 	\begin{equation}\label{eq:cond2.1-2} \begin{split}
 			&\al_{i,k'}(\theta)=\ov\al_{i',k}(\theta), \qquad \al_{i,k^\ast}(\theta)\leq \ov\al_{i^\ast,k}(\theta),\qquad \al_{i,k^\ast} (\theta)<1,\qquad \ov\al_{i^\ast,k}(\theta)>-1, \\
 			&\min_k \ov \al_{i',k}(\theta)<1,\qquad |\alpha_{i,k}(\theta)-\alpha_{i,k}(\theta')| \leq \eta, \qquad\frac{1}{\sqrt{Kn^{r}}}\leq\ov c_{i,k}(n,\theta)\leq \sqrt{Kn^{r}},\\
 			&\frac{c_{i,k'}(n,\theta)}{\ov c_{i',k}(n,\theta)} \leq Kn^\eta,\qquad\frac{c_{i,k}(n,\theta)}{c_{i,k}(n,\theta')} \leq K n^\eta,\qquad \frac{c_{i,k^\ast}(n,\theta)}{\ov c_{i^\ast,k}(n,\theta)} \leq K\lVert R_n\rVert^{-1/2+\iota}.
 		\end{split}
 	\end{equation}
 \end{assumption}
 
 The added generality of Assumption~\ref{ass:gauss-lan-2-ext} over Assumption~\ref{ass:gauss-lan-2} is that we allow some exponents $\alpha_{i,k}(\theta)$ to be larger than $1$. 
 If all exponents are less than $1$, we almost recover Assumption~\ref{ass:gauss-lan-2} by setting $i'=i^\ast=i$, $k'=k^\ast=k$ and $\al_{i,k}(\theta)=\ov\al_{i,k}(\theta)$.
 The only difference is that we do not assume the growth condition $\frac{1}{\sqrt{Kn^{r}}}\leq\ov c_{i,k}(n,\theta)\leq \sqrt{Kn^{r}}$ in Assumption~\ref{ass:gauss-lan-2}.
 
 If some of the exponents are bigger than $1$, the   reason why one might not be able to choose $i'=i^\ast=i$, $k'=k^\ast=k$ and $\al_{i,k}(\theta)=\ov\al_{i,k}(\theta)$ is because we need $\al_{i,k^\ast}(\theta)<1$. For example, if for some $k$, we have $\ov\al_{i,k}(\theta)>1$ for all $i$, then we are forced to choose $k^\ast$ in such a way that $k^\ast\neq k$ and $\al_{i,k^\ast}(\theta)<\ov\al_{i^\ast,k}(\theta)$. In this case, the most restrictive condition in \eqref{eq:cond2.1-2} is the last one, which requires the ratio of coefficients resulting from choosing $k^\ast \neq k$ or $i^\ast\neq i$ be marginally smaller than $\lVert R_n\rVert^{-1/2}$.

\section{Proof of Corollaries~\ref{cor:2}--\ref{cor:3}}\label{sec:examples}

\begin{proof}[Proof of Corollary~\ref{cor:2}]
	The spectral density of $(\Delta^n_i X)_{i=1}^{\lfloor T/\Den\rfloor}$ is given by 
	\begin{equation}\label{eq:fn1} 
		f_n^\theta(\la) = \si_1^2 \Delta_n^{2H_1} f_{H_1}(\la)+\si_2^2 \Delta_n^{2H_2} f_{H_2}(\la)
	\end{equation}
	at stage $n$. Moreover, we have 
	\begin{equation}\label{eq:grad}\nabla f_n^\theta(\la)=\begin{pmatrix}
			\si_1^2\Den^{2H_1} (\partial_H f_{H_1}(\la)-2(\log \Den^{-1})f_{H_1}(\la))\\ \Den^{2H_1}f_{H_1}(\la),\\
			\si_2^2\Den^{2H_2} (\partial_H f_{H_2}(\la)-2(\log \Den^{-1})f_{H_2}(\la))\\ \Den^{2H_2}f_{H_2}(\la)
	\end{pmatrix}\end{equation}
	and $D^2f_n^\theta(\la)=(D^2f_n^\theta(\la)_{ij})_{i,j=1}^4$ with
	\begin{equation}\label{eq:Hessian} \begin{split}
			D^2f_n^\theta(\la)_{11}&=  \si_1^2\Den^{2H_1}[\partial_H^2 f_{H_1}(\la)-4(\log \Den^{-1})\partial_Hf_{H_1}(\la)+4(\log \Den^{-1})^2f_{H_1}(\la)],\\
			D^2f_n^\theta(\la)_{12}&=D^2f_n^\theta(\la)_{21}=\Den^{2H_1} (\partial_H f_{H_1}(\la)-2(\log \Den^{-1})f_{H_1}(\la)),\\
			D^2f_n^\theta(\la)_{13}&=D^2f_n^\theta(\la)_{31}=D^2f_n^\theta(\la)_{14}=D^2f_n^\theta(\la)_{41}=0,\\
			D^2f_n^\theta(\la)_{22}&=D^2f_n^\theta(\la)_{23}=D^2f_n^\theta(\la)_{32}=D^2f_n^\theta(\la)_{24}=D^2f_n^\theta(\la)_{42}=0,\\
			D^2f_n^\theta(\la)_{33}&=\si_2^2\Den^{2H_2}[\partial_H^2 f_{H_2}(\la)-4(\log \Den^{-1})\partial_Hf_{H_2}(\la)+4(\log \Den^{-1})^2f_{H_2}(\la)],\\
			D^2f_n^\theta(\la)_{34}&=D^2f_n^\theta(\la)_{43}=\Den^{2H_2} (\partial_H f_{H_2}(\la)-2(\log \Den^{-1})f_{H_2}(\la)),\\
			D^2f_n^\theta(\la)_{44}&=0.
		\end{split}
	\end{equation}
	
	We verify Assumptions~\ref{ass:gauss-lan-1} and \ref{ass:gauss-lan-2} with  rate matrix given by \eqref{eq:R1}
	and Fisher information matrix given by \eqref{eq:I1}.
	We start with \eqref{eq:cond1.1} and note that
		\begin{equation}	\label{eq:fH0} 
		\partial^k_H f_H(\la) \sim 2^{k-1}C_H\lvert \la\rvert^{1-2H}(\log_\vee \lvert\la\rvert^{-1})^{k},\quad \text{as } \la\to0.
	\end{equation} 
By  \eqref{eq:grad}, \eqref{eq:fH0} and the fact that $\Den^{1/2+2(H_2-\ov H_2)+2\ov H_1}-\Den^{1/2+2H_1}=O(\Den^{1/2+2H_1+\eta}\log \Den^{-1})$ for $\|\theta-\ov\theta\| \leq \Den^{\eta}$, we have
	\begin{align*}
		R_n^\T\nabla f_n^\theta(\la)	&=\begin{pmatrix} \si_1^2\Den^{2H_1+1/2}\partial_H f_{H_1}(\la)+O(\Den^{\eta+2H_1+1/2}(\log\Den^{-1})\lvert\la\rvert^{1-2H_1})\\
			\Den^{2H_1+1/2}f_{H_1}(\la)\\
			\si_2^2\Den^{2H_1+1/2}\partial_H f_{H_2}(\la) + O(\Den^{\eta+2H_1+1/2}(\log \Den^{-1})\lvert \la\rvert^{1-2H_2}\log_\vee \lvert\la\rvert^{-1})\\
			\Den^{2H_1+1/2}f_{H_2}(\la)+ O(\Den^{\eta+2H_1+1/2}(\log \Den^{-1})\lvert \la\rvert^{1-2H_2})
		\end{pmatrix}\\
		&=\begin{pmatrix} \si_1^2\partial_H f_{H_1}(\la) \\
			f_{H_1}(\la)\\
			\si_2^2 \partial_H f_{H_2}(\la)  \\
			f_{H_2}(\la) 
		\end{pmatrix}\Den^{2H_1+1/2}+ O(\Den^{\eta+2H_1+1/2}(\log \Den^{-1})\lvert \la\rvert^{1-2H_2}\log_\vee \lvert\la\rvert^{-1})\\
		&=\begin{pmatrix} \ov\si_1^2\partial_H f_{\ov H_1}(\la) \\
			f_{\ov H_1}(\la)\\
			\ov	\si_2^2 \partial_H f_{\ov H_2}(\la)  \\
			f_{\ov H_2}(\la) 
		\end{pmatrix}\Den^{2H_1+1/2}+ O(\Den^{\eta+2H_1+1/2}(\log \Den^{-1})\lvert \la\rvert^{1-2\ov H_2-2\eta}(\log_\vee \lvert\la\rvert^{-1})^2)
	\end{align*}
	for all $\theta$ satisfying $\|\theta-\ov\theta\| \leq \Den^{\eta}$.
	At the same time, 
	\begin{align*}
		\frac{\Den^{4H_1}}{f_n^\theta(\la)^2}	&=\frac{1}{(\si_1^2f_{H_1}(\la)+\si_2^2\Den^{2(H_2-H_1)} f_{H_2}(\la))^2}\\
		&=\frac{1}{\si_1^4 f_{H_1}(\la)^2} - \frac{2\si_1^2f_{H_1}(\la) \si_2^2\Den^{2(H_2-H_1)}f_{H_2}(\la)+(\si_2^2\Den^{2(H_2-H_1)}f_{H_2}(\la))^2}{(\si_1^2f_{H_1}(\la))^2(\si_2^2\Den^{2(H_2-H_1)}f_{H_2}(\la))^2},
	\end{align*}
	which implies
	\[ \biggl\lvert\frac{\Den^{4H_1}}{f_n^\theta(\la)^2}	-\frac{1}{\si_1^4 f_{H_1}(\la)^2} \biggr\rvert \leq \frac{4(\si_2^2\Den^{2(H_2-H_1)}f_{H_2}(\la))^\eps}{(\si_1^2f_{H_1}(\la))^{2+\eps}} \]
	for any $\eps\in(0,1)$.
	Therefore, 
	\begin{align*}
		&\frac{\lfloor T/\Den\rfloor}{4\pi}\int_{-\pi}^\pi \frac{(R_n^\T\nabla f_n^\theta(\la))(R_n^\T\nabla f_n^\theta(\la))^\T}{f_n^\theta(\la)^2}d\la\\
		&\quad=I(\ov\theta)+O\Biggl( \int_{-\pi}^\pi \frac{\Den^\eta(\log\Den^{-1})\lvert \la\rvert^{2-4\ov H_2-2\eta}(\log_\vee \lvert \la\rvert^{-1})^3}{f_{H_1}(\la)^2} d\la \Biggr)\\
		&\quad\quad+O\Biggl( \int_{-\pi}^\pi \frac{\lvert \la\rvert^{2-4\ov H_2-4\eta}(\log_\vee \lvert\la\rvert^{-1})^4(\Den^{2(H_2-H_1)}f_{H_2}(\la))^\eps}{f_{H_1}(\la)^{2+\eps}} d\la\Biggr) \\
		&\quad=O\Biggl( (\Den^{\eta}(\log\Den^{-1})\vee \Den^{2\eps(\ov H_2-\ov H_1-2\eta)})\int_{-\pi}^\pi \lvert\la\rvert^{-(4+2\eps)(\ov H_2-\ov H_1)-(8+4\eps)\eta}(\log_\vee \lvert \la\rvert^{-1})^4 d\la \Biggr). 
	\end{align*}
	Because $\ov H_1<\ov H_2$ and $\ov H_2-\ov H_1<\frac14$ by assumption and there is no loss of generality to assume that $\eta>0$ is small, the last integral is finite for sufficiently small $\eps$ and $\eta$, showing \eqref{eq:cond1.1}. To prove \eqref{eq:cond1.2}, we can use \eqref{eq:grad} and \eqref{eq:Hessian} in conjunction with \eqref{eq:fn1} and \eqref{eq:fH0} to obtain the bounds
	\begin{align*}
		\frac{\|\partial_\beta R_n^\T \nabla f_n^{\theta}(\lambda)\|^2}{f_n^\theta(\lambda)^2}&\leq K_\theta \Biggl( \frac{\Den^{1/2+2\ov H_1-4\eta} (\log \Den^{-1})^{1+\lvert \beta\rvert} \lvert\la\rvert^{1-2\ov H_2-2\eta}(\log_\vee \lvert\la\rvert^{-1})^{1+\lvert \beta\rvert}}{\Den^{2\ov H_1+2\eta}\lvert\la\rvert^{1-2\ov H_1+2\eta}} \Biggr)^2
		\\
		&= K_\theta \Den^{1-12\eta}(\log \Den^{-1})^{2+2\lvert \beta\rvert} \lvert \la\rvert^{-4(\ov H_2-\ov H_1)-8\eta}(\log_\vee \lvert\la\rvert^{-1})^{2+2\lvert \beta\rvert},\\
		\frac{\|\partial_\beta R_n ^\T D^2 f_n^{\theta}(\lambda) \|^2}{f_n^\theta(\lambda)^2}&\leq K_\theta \Biggl( \frac{\Den^{1/2+2\ov H_1-4\eta} (\log \Den^{-1})^{2+\lvert \beta\rvert} \lvert\la\rvert^{1-2\ov H_2-2\eta}(\log_\vee \lvert\la\rvert^{-1})^{2+\lvert \beta\rvert}}{\Den^{2\ov H_1+2\eta}\lvert\la\rvert^{1-2\ov H_1+2\eta}} \Biggr)^2
		\\
		&= K_\theta \Den^{1-12\eta}(\log \Den^{-1})^{4+2\lvert \beta\rvert} \lvert \la\rvert^{-4(\ov H_2-\ov H_1)-8\eta}(\log_\vee \lvert\la\rvert^{-1})^{4+2\lvert \beta\rvert},
	\end{align*}
	for all $\theta\in\Theta$ with $\lVert \theta-\ov\theta\rVert\leq \eta$ and some constant $K_\theta$ that varies continuously in $\theta\in\Theta$. As $\ov H_2-\ov H_1<\frac14$ and $\eta>0$ was arbitrary, this implies \eqref{eq:cond1.2}.
	
	In order to verify Assumption~\ref{ass:gauss-lan-2}, we choose $m=2$ and $q=1$ and decompose $f_n^\theta=f_{n,1}^\theta+f_{n,2}^\theta$ where $f_{n,i}^\theta(\la)=\si_i^2\Den^{2H_i}f_{H_i}(\la)$ for $i=1,2$.  By \eqref{eq:fH0}, there is
	$K>1$   such that
	\[ \lvert\partial_\beta f_H(\la)\rvert + \lvert \la \partial_\beta f'_H(\la)\rvert \leq K C_H\lvert \la\rvert^{1-2H}(\log_\vee \lvert\la\rvert^{-1})^{\lvert\beta\rvert} \]
	and 
	\begin{equation}\label{eq:bound}
		\lvert\partial_\beta f_{n,i}^\theta(\la)\rvert	+\lvert\la\partial_\beta (f_{n,i}^\theta)'(\la)\rvert	\leq KC_{H_i}(\si_i^2\vee1)\Den^{2H_i}(\log \Den^{-1})^{\lvert\beta\rvert}\lvert\la\rvert^{1-2H_i} (\log_\vee \lvert\la\rvert^{-1})^{\lvert\beta\rvert} 
	\end{equation}
	for all $\lvert\beta\rvert\leq 3$ and $H\in(0,1)$.
	In fact, one can choose $K$ such that for both $i=1,2$, we also have
	\begin{equation}\label{eq:recip1} 
		\frac{1}{f_n^\theta(\la)}\leq \frac{1}{f_{n,i}^\theta(\la)}\leq \frac{K}{C_{H_i}\si_i^2\Den^{2H_i}}\lvert \la\rvert^{2H_i-1}
	\end{equation}
	and
	\begin{equation}\label{eq:recip2} 
		\biggl\lvert\la\biggl(\frac{1}{f_n^\theta(\la)}\biggr)'	\biggr\rvert\leq \frac{\lvert \la (f_n^\theta)'(\la) \rvert}{f_n^\theta(\la)^2}\leq \frac{\lvert \la\rvert}{f_{n,i}^\theta(\la)}\biggl( \frac{\lvert(f_{n,1}^\theta)'(\la)\rvert}{f_{n,1}^\theta(\la)} + \frac{\lvert(f_{n,2}^\theta)'(\la)\rvert}{f_{n,2}^\theta(\la)} \biggr) \leq \frac{2K^3}{C_{H_i}\si_i^2\Den^{2H_i}} \lvert \la\rvert^{2H_i-1}.
	\end{equation}
	Therefore, defining $L(\eps)= 2K^4(\frac{1}{e\eps^3}\vee \pi^\eps)$ and 
	\begin{equation}\label{eq:choice} 
		c_{i,1}(n,\theta)=KC_{H_i}\si_i^2 \Den^{2H_i},\quad   c_{i,1,\beta}(n,\theta)=KC_{H_i}(\si_i^2\vee1)\Den^{2H_i}(\log \Den^{-1})^{\lvert\beta\rvert}, \quad \al_{i,1}(\theta)=2H_i-1,
	\end{equation}
	for $i=1,2$ and $\lvert\beta\rvert=1,2,3$,
	and noting that $\lvert\la\rvert^\eps(\log_\vee \lvert\la\rvert^{-1})^3 \leq \frac{1}{e\eps^3}\vee \pi^\eps$ uniformly in $\la\in(-\pi,\pi)\setminus\{0\}$,
	we have $f_{n,i}^\theta\in \Ga_1(c_{i,1}(n,\theta),2H_i-1,L)$, $1/f_n^\theta \in \Ga_1(c_{1,1}(n,\theta),1-2H_1,L)\cap\Ga_1(c_{2,1}(n,\theta),1-2H_2,L)$ as well as $\partial_\beta f_{n,i}^\theta\in \Ga_1( c_{i,1,\beta}(n,\theta),2H_i-1,L)$ for $i=1,2$ and $\lvert\beta\rvert=1,2,3$. The choice in \eqref{eq:choice} also satisfies \eqref{eq:cond2.1}. 
	
	Finally, to verify the invertibility of $I(\ov \theta)$, it suffices by Lemma~\ref{lem:posdef} to show that for all $z\in\R^4$,
	\[\begin{pmatrix} \ov\si_1^2\partial_H f_{\ov H_1}(\la) /f_{\ov H_1}(\la)\\
		1\\
		\ov	\si_2^2 \partial_H f_{\ov H_2}(\la) /f_{\ov H_1}(\la)  \\
		f_{\ov H_2}(\la) / f_{\ov H_1}(\la)
	\end{pmatrix}^\T z\neq0 \]
	for some $\la$. To see this, observe that as $\la\to0$, the previous line is asymptotically equivalent to
	\[\begin{pmatrix}2 \ov\si_1^2\log \lvert\la\rvert^{-1}\\
		1\\
		2\ov	\si_2^2 (C_{\ov H_2}/C_{\ov H_1}) \lvert\la\rvert^{2(\ov H_1-\ov H_2)} \log \lvert\la\rvert^{-1}  \\
		(C_{\ov H_2}/C_{\ov H_1}) \lvert\la\rvert^{2(\ov H_1-\ov H_2)}
	\end{pmatrix}^\T z\neq0, \]
	which holds true for sufficiently small $\la$ because each entry of the first vector has a different asymptotic behavior around $\la=0$.
\end{proof}

\begin{proof}[Proof of Corollary~\ref{cor:1}]
	The spectral density of $(X_{i\Den})_{i=0}^{\lfloor T/\Den\rfloor}$ is given by
\begin{align*}
f^\theta_n(\la) &= \frac{\si^2}{2\pi}C(H)\sum_{k\in\Z} \frac{\Den^{2H}\lvert \la+2\pi k\rvert^{1-2H}}{\kappa^2\Den^2+ (\la+2\pi k)^2} \\
&= \frac{\si^2}{2\pi}C(H) \frac{\Den^{2H}\lvert\la\rvert^{1-2H}}{\kappa^2\Den^2+\la^2} + \frac{\si^2}{2\pi}C(H)\Den^{2H} g_n^\theta(\la),
\end{align*}
	where   $g_n^\theta(\la) = \sum_{k\neq 0}\frac{\lvert \la+2\pi k\rvert^{1-2H}}{\kappa^2\Den^2+(\la+2\pi k)^2}$; see \cite{SYZ24}. Note that 
	\[ \nabla f^\theta_n(\la)= \begin{pmatrix} -\frac{\si^2}{2\pi}C(H)\frac{2\kappa\Den^{2+2H}\lvert \la\rvert^{1-2H}}{(\kappa^2\Den^2+\la^2)^2} + \frac{\si^2}{2\pi}C(H)\Den^{2+2H}\kappa h_n^\theta(\la)\\
	\Den^{2H}	\partial_H k_n^\theta(\la) - 2f_n^\theta(\la)\log \Den^{-1}\\
		 f_n^\theta(\la)/\si^2 \end{pmatrix}, \]
	where 
	\[h_n^\theta(\la)= -2 \sum_{k\neq0}\frac{\lvert\la+2\pi k\rvert^{1-2H}}{(\kappa^2\Den^2+(\la+2\pi k)^2)^{2}},\quad k_n^\theta(\la)=f_n^\theta(\la)/\Den^{2H}.\] 
	Therefore, if $\lVert \theta-\theta_0\rVert\leq (T/\Den)^{-\delta}$, 
	\begin{align*}
		(R_n^\T I_n(\theta) R_n)_{11}	&=\frac{\lfloor T/\Den\rfloor}{4\pi T}\int_{-\pi}^\pi \biggl(\frac{ 2\kappa\Den^2\lvert\la\rvert^{1-2H}(\kappa^2\Den^2+\la^2)^{-2}-  \Den^2\kappa h_n^\theta(\la)}{ \lvert\la\rvert^{1-2H}(\kappa^2\Den^2+\la^2)^{-1}+  g_n^\theta(\la)} \biggr)^2d\la\\
		&=\frac{\lfloor T/\Den\rfloor\Den}{4\pi T}\int_{-\pi/\Den}^{\pi/\Den} \biggl(\frac{ 2\kappa (\kappa^2+\nu^2)^{-2}-  \Den^{3+2H} \kappa h_n^\theta(\Den\nu)}{ (\kappa^2+\nu^2)^{-1}+  \Den^{1+2H}g_n^\theta(\Den\nu)} \biggr)^2d\nu.
	\end{align*}
	Because $g_n^\theta(\la)$ and $h_n^\theta(\la)$ are  bounded from below and above, uniformly in $n$, $\theta$ and $\la$, we deduce 
	\begin{align*}
		(R^\T_nI_n(\theta)R_n)_{11}&\to \frac{1}{4\pi}\int_\R \biggl(\frac{2\kappa_0(\kappa_0^2+\nu^2)^{-2}}{(\kappa_0^2+\nu^2)^{-1}}\biggr)^2 d\nu = \frac{\kappa_0^2}{\pi} \int_\R \frac{1}{(\kappa_0^2+\nu^2)^2} d\nu=\frac{1}{2\kappa_0}=I(\theta_0)_{11},
	\end{align*}
uniformly for $\lVert \theta-\theta_0\rVert\leq (T/\Den)^{-\delta}$. 

Next, we study the part of the limiting Fisher information matrix that concerns estimation of $(H,\si^2)$. We write $A_{2:3,2:3}$ for the lower-right $2\times 2$ submatrix of a $3\times 3$ matrix $A$. Then, writing $m_n^\theta(\la)=\Den^{2H}\partial_H k_n^\theta(\la)-2f_n^\theta(\la)\log \Den^{-1}$, we have
\begin{align*}
&(R_n^T I_n(\theta)R_n)_{2:3,2:3}\\
	&\quad\approx\frac{1}{4\pi} \int_{-\pi}^\pi \frac{1}{f_n^\theta(\la)^2} \begin{pmatrix} 1 & 2\si^2\log \Den^{-1}\\ 0 &1\end{pmatrix}\begin{pmatrix}  (m_n^\theta(\la))^2 &m_n^\theta(\la)f_n^\theta(\la)/\si^2\\ m_n^\theta(\la)f_n^\theta(\la)/\si^2 & f_n^\theta(\la)^2/\si^4  \end{pmatrix}\begin{pmatrix} 1 & 0 \\ 2\si^2\log \Den^{-1} &1\end{pmatrix} d\la\\
	&\quad\approx\frac{1}{4\pi} \int_{-\pi}^\pi \frac{1}{f_n^\theta(\la)^2} \begin{pmatrix} 1 & 2\si^2\log \Den^{-1}\\ 0 &1\end{pmatrix}\begin{pmatrix}  m_n^\theta(\la)\Den^{2H}\partial_H k_n^\theta(\la) &m_n^\theta(\la)f_n^\theta(\la)/\si^2\\ \Den^{2H}\partial_H k_n^\theta(\la)f_n^\theta(\la)/\si^2 & f_n^\theta(\la)^2/\si^4  \end{pmatrix}  d\la\\
	&\quad\approx\frac{1}{4\pi} \int_{-\pi}^\pi \frac{1}{f_n^\theta(\la)^2} \begin{pmatrix} (\Den^{2H}\partial_H k_n^\theta(\la))^2 & \Den^{2H}\partial_H k_n^\theta(\la)f_n^\theta(\la)/\si^2 \\ \Den^{2H}\partial_H k_n^\theta(\la)f_n^\theta(\la)/\si^2  & f_n^\theta(\la)^2/\si^4 \end{pmatrix}  d\la
\end{align*}
for $\lVert \theta-\theta_0\rVert\leq (T/\Den)^{-\delta}$, where $\approx$ means that the difference between left and right is of smaller order.
Because
\begin{equation}\label{eq:F}\begin{split}
	\frac{\Den^{2H}\partial_H k_n^\theta(\la)}{f_n^\theta(\la)}	&=\frac{C'(H)}{C(H)} +\frac{2 \lvert \la\rvert^{1-2H}\log \lvert\la\rvert^{-1}(\kappa^2\Den^2+\la^2)^{-1}+\partial_H g_n^\theta(\la) }{\lvert \la\rvert^{1-2H}(\kappa^2\Den^2+\la^2)^{-1} + g_n^\theta(\la)} \\
	&=\frac{C'(H)}{C(H)} +\frac{2\log \lvert\la\rvert^{-1} + \lvert \la\rvert^{2H-1}(\kappa^2\Den^2+\la^2)\partial_H g_n^\theta(\la) }{ 1 +  \lvert \la\rvert^{2H-1}(\kappa^2\Den^2+\la^2)g_n^\theta(\la)} \\
	&\to \frac{C'(H)}{C(H)} +\frac{2\log \lvert\la\rvert^{-1}+2\lvert \la\rvert^{2H+1}\partial_H G_H(\la)}{1+2\lvert \la\rvert^{2H+1}G_H(\la)}=F_H(\la),
\end{split}\end{equation}
uniformly in $\la$,  we obtain
\begin{equation*}
	(R_n^T I_n(\theta)R_n)_{2:3,2:3}\to I(\theta_0)_{2:3,2:3},
\end{equation*}
uniformly for $\lVert \theta-\theta_0\rVert\leq (T/\Den)^{-\delta}$.

For the remaining elements of $I(\theta_0)$, note that by \eqref{eq:F},
\begin{align*}
(R_n^TI_n(\theta)R_n)_{12}	&\approx \frac{1}{4\pi \sqrt{\Den}} \int_{-\pi}^\pi \frac{\Den^{2H} \partial_\kappa f_n^\theta(\la)\partial_H k_n^\theta(\la)}{f_n^\theta(\la)^2} d\la  \approx \frac{1}{4\pi \sqrt{\Den}} \int_{-\pi}^\pi \frac{  \partial_\kappa f_n^\theta(\la) }{f_n^\theta(\la)} F_H(\la)d\la \\
	&\approx \frac{\sqrt{\Den}}{4\pi}\int_{-\pi/\Den}^{\pi/\Den} \frac{-2\kappa\lvert\nu\rvert^{1-2H}(\kappa^2+\nu^2)^{-2} + \Den^{3+2H}\kappa h_n^\theta(\nu\Den)}{\lvert\nu\rvert^{1-2H}(\kappa^2+\nu^2)^{-1} + \Den^{1+2H} g_n^\theta(\nu\Den)}F_H(\nu\Den) d\nu\\
	&\approx-\frac{\sqrt{\Den}\kappa}{2\pi}\int_{-\pi/\Den}^{\pi/\Den} \frac{F_H(\nu\Den)}{\kappa^2 + \nu^2} d\nu = O(\sqrt{\Den}\log \Den^{-1})
\end{align*}
and
\begin{align*}
	(R_n^TI_n(\theta)R_n)_{12}	&\approx \frac{1}{4\pi \sqrt{\Den}} \int_{-\pi}^\pi \frac{  \partial_\kappa f_n^\theta(\la)\partial_{\si^2} f_n^\theta(\la)}{f_n^\theta(\la)^2} d\la  \approx \frac{1}{4\pi \si^2\sqrt{\Den}} \int_{-\pi}^\pi \frac{  \partial_\kappa f_n^\theta(\la) }{f_n^\theta(\la)} d\la = O(\sqrt{\Den}).
\end{align*}
This completes the proof of \eqref{eq:cond1.1}, where $I(\theta_0)$ is regular by Lemma~\ref{lem:posdef}.
	Using similar calculations, one can show that  \eqref{eq:cond1.2} follows from the bound
	\begin{align*}
		\lvert	\partial_\beta  f_n^\theta(\la)\rvert  \leq \frac{K\Den^{2H} \lvert \la\rvert^{1-2H}(\log \Den^{-1})^3 (\log_\vee \lvert \la\rvert^{-1})^3}{\kappa^2\Den^2+\la^2},\quad \lvert \beta\rvert\leq3.
	\end{align*}
	
	Next, we verify Assumption~\ref{ass:gauss-lan-2-ext}. To this end, we recommend going through the corresponding arguments in the proof of Corollary~\ref{cor:3} first, in particular the interpolation argument.  Here we apply a similar reasoning with $m=1$ and $q=3$: with $z\in(H,1)$ to be specified later, we choose   
	\begin{equation}\label{eq:choice-ac-2}\begin{aligned}
			\al_{1,1}(\theta)	&=2H-1, &\al_{1,2}(\theta)&=2H+1,&\al_{1,3}(\theta)&=2H+1-2z, \\
			\ov\al_{1,1}(\theta)	&=2H-1, &\ov\al_{1,2}(\theta)&=2H+1,&\ov\al_{1,3}(\theta)&=2H+1
	\end{aligned}\end{equation}
	and  
	\begin{equation}
	 \begin{aligned}
			c_{1,1}(n,\theta)&=\frac{\si^2C(H)}{\kappa^2\Den^{2-2H}} \lvert \log \Den\rvert^3, &c_{1,2}(n,\theta)&=\si^2C(H)\Den^{2H} \lvert\log \Den\rvert^3, \\
			c_{1,3}(n,\theta)&=\frac{\si^2C(H)}{\kappa^{2z}}\Den^{1-3z+2Hz} \lvert \log \Den\rvert^3,\\
			\ov c_{1,1}(n,\theta)&=\frac{\si^2C(H)}{\kappa^2\Den^{2-2H}}, &\ov c_{1,2}(n,\theta)&=\ov c_{1,3}(n,\theta)=\si^2C(H)\Den^{2H},
		\end{aligned}
	\end{equation}
	 as well as $k'$ and $k^\ast$ as in \eqref{eq:choice-k}, that is, 
	\begin{equation*}\begin{aligned}
			k'(1)	&=1, &k'(2)&=2, &k'(3)&=2, \\
			k^\ast(1)	&=1, &k^\ast(2)&=3, &k^\ast(3)&=3,
	\end{aligned}\end{equation*}
with the abbreviation $k^\ast(k) = k^\ast(1,k)$ and $k'(k) = k'(1,k)$.
	The last condition in \eqref{eq:cond2.1-2} yields the restriction
	\begin{align*}
		  \max_{k=1,2,3}\frac{c_{1,k^*(k)}}{\overline{c}_{1,k}} \leq  C \lvert\log \Den\rvert^3\max\left( 1, \, \Den^{1-3z+2Hz-2H} \right) \leq \|R_n\|^{-1/2+\iota}\qquad \text{for some }\iota>0.
	\end{align*}
	With  $\|R_n\| = O(1/\sqrt{T})=O(\Den^{\beta/2})$ and choosing $z$ arbitrarily close to its lower bound $H$,  we obtain the constraint $ 1-5H+2H^2+\frac{\beta}{4}>0$.
\end{proof}

\begin{proof}[Proof of Corollary~\ref{cor:3}]
	The spectral density $f_n^\theta$ of $(X^{(n)}_t)_{t\geq0}$ is 
	\[ f_n^\theta(\la)=\frac{\si^2}{2\pi}\frac1{\lvert 1-\phi_ne^{i\la}\rvert^2}=\frac{\si^2}{2\pi}\frac1{1-2\phi_n\cos \la +\phi_n^2},\quad \la\in(-\pi,\pi), \]
	with gradient
	\[ \nabla f_n^\theta(\la)=\begin{pmatrix} \frac{\si^2}{2\pi}\frac{2a_n(\phi_n-\cos \la)}{(1-2\phi_n\cos \la +\phi_n^2)^2} \\ \frac{1}{2\pi}\frac1{1-2\phi_n\cos \la +\phi_n^2} \end{pmatrix}. \]
	Therefore,
	\begin{equation*}
		R_n^\T\nabla f_n^\theta(\la)	=\begin{pmatrix} \frac1{(na_n)^{1/2}}\frac{\si^2}{2\pi}\frac{2a_n(\phi_n-\cos \la)}{(1-2\phi_n\cos \la +\phi_n^2)^2} \\ \frac1{n^{1/2}} \frac{1}{2\pi}\frac1{1-2\phi_n\cos \la +\phi_n^2} \end{pmatrix} =\frac{f_n^\theta(\la)}{n^{1/2}}\begin{pmatrix} \frac{2a_n^{1/2}(\phi_n-\cos\la)}{1-2\phi_n\cos \la+\phi_n^2} \\ \si^{-2}\end{pmatrix}
	\end{equation*}
	and $R_n^\T \nabla f_n^\theta(\la)/f_n^\theta(\la)=R_n^\T \nabla \bar f_n^\theta(\la)/\bar f_n^\theta(\la)$, which was analyzed above.
	and therefore, 
	\begin{align*}
		&\frac{n}{4\pi}	\int_{-\pi}^\pi \frac{R_n^\T\nabla   f_n^\theta(\la)(R_n^\T\nabla   f_n^\theta(\la))^\T}{  f_n^\theta(\la)^2}\,d\la= \frac1{4\pi}\int_{-\pi}^\pi\begin{pmatrix} \frac{2a_n^{1/2}(\phi_n-\cos\la)}{1-2\phi_n\cos \la+\phi_n^2} \\ \si^{-2}\end{pmatrix}\begin{pmatrix} \frac{2a_n^{1/2}(\phi_n-\cos\la)}{1-2\phi_n\cos \la+\phi_n^2} \\ \si^{-2}\end{pmatrix}^\T\,d\la \\
		&\quad= \frac1{4\pi}\int_{-\pi}^\pi\begin{pmatrix} \frac{4a_n(1-ca_n-\cos\la)^2}{(2(1-ca_n)(1-\cos\la)+c^2a_n^2)^2} & \frac{2a_n^{1/2}(1-ca_n-\cos\la)}{\si^2(2(1-ca_n)(1-\cos\la)+c^2a_n^2)}\\  \frac{2a_n^{1/2}(1-ca_n-\cos\la)}{\si^2(2(1-ca_n)(1-\cos\la)+c^2a_n^2)} & \si^{-4}\end{pmatrix}\,d\la.
	\end{align*}
	Clearly, the lower right entry equals $\si^{-4}/2$, which converges to $I(\theta_0)_{22}$ as $\theta\to\theta_0$. The upper left entry is given by
	\begin{align*}
		& \frac{a_n}{4\pi}\int_{-\pi/a_n}^{\pi/a_n}  \frac{4a_n(1-ca_n-\cos(a_n\nu))^2}{(2(1-ca_n)(1-\cos(a_n\nu))+c^2a_n^2)^2}  \,d\nu \\
		&\quad\to \frac1\pi\int_{\R}   \frac{c_0^2}{(\nu^2+c_0^2)^2}  \,d\nu = c_0^{-1}/2=I(\theta_0)_{11},
	\end{align*}
	while analogous calculations show that the off-diagonal entries converge to $0$.
	Similar arguments can also be used to show that \eqref{eq:cond1.2} follows from the bounds
	\[ \Bigl\lvert \partial_c^j   f_n^\theta(a_n\nu)\Bigr\rvert  +\Bigl\lvert \partial_{\si^2}\partial_c^j   f_n^\theta(a_n\nu)\Bigr\rvert \leq \frac{K}{a_n^2(\nu^2+1)^{2}},\qquad j=1,2,3. \]
	
	Next, we examine the regularity of $f_n^\theta$. Elementary calculations show that 
	\[f_n^\theta(\la)\leq K \sigma^2 \min((ca_n)^{-2},\; \lambda^{-2}).\]
	Unfortunately, the $\la^{-2}$-singularity at the origin as $n\to\infty$ prevents us from working under Assumption~\ref{ass:gauss-lan-2}. 
	The central idea of our solution is to derive an additional upper bound on $f^\theta_n(\lambda)$ via interpolation: 
	Since $(ca_n)^{2z}\la^{2(1-z)}\leq (ca_n)^2\vee \la^2$ for all $z\in[0,1]$, we have $f^\theta_n(\la)\leq K\si^2((ca_n)^{-2} \wedge \la^{-2})=K\si^2/((ca_n)^2\vee \la^2)\leq K\si^2/((ca_n)^{2z}\la^{2(1-z)})$.
	That is, 
	\[ f_n^\theta(\lambda) \leq K \sigma^2 \min\left( (ca_n)^{-2},\; \lambda^{-2},\; (ca_n)^{-2z} \lambda^{-2(1-z)}  \right). \]
	With this new upper bound, we are able to verify the less restrictive (but more cumbersome) Assumption~\ref{ass:gauss-lan-2-ext}.
	In particular, we have \eqref{eqn:f-form-2} with $m=1$, $q=3$ and 
	\begin{equation}\label{eq:choice-ac}\begin{aligned}
			\al_{1,1}(\theta)	&=0, &\al_{1,2}(\theta)&=2,&\al_{1,3}(\theta)&=2(1-z), \\
			\ov\al_{1,1}(\theta)	&=0, &\ov\al_{1,2}(\theta)&=2,&\ov\al_{1,3}(\theta)&=2, \\
			c_{1,1}(n,\theta)&=\si^2(ca_n)^{-2}, &c_{1,2}(n,\theta)&=\si^2, &c_{1,3}(n,\theta)&=\si^2(ca_n)^{-2z},\\
			\ov c_{1,1}(n,\theta)&=\si^2(ca_n)^{-2}, &\ov c_{1,2}(n,\theta)&=\si^2, &\ov c_{1,3}(n,\theta)&=\si^2
	\end{aligned}\end{equation}
	and (with the abbreviation $k^\ast (k) = k^\ast(1,k)$ and $k'(k)=k'(1,k)$)
	\begin{equation}\label{eq:choice-k}\begin{aligned}
			k'(1)	&=1, &k'(2)&=2, &k'(3)&=2, \\
			k^\ast(1)	&=1, &k^\ast(2)&=3, &k^\ast(3)&=3
	\end{aligned}\end{equation}
	and $L(\eps)\equiv K$ for some large enough constant $K$. In \eqref{eq:choice-ac}, the bound corresponding to $k=3$ is the interpolation bound. We choose $z\in(\frac12,1)$ to ensure that $\al_{1,3}(\theta)<1$, which is important because $k^\ast(2)=k^\ast(3)=3$ and we need $\al_{1,k^\ast}(\theta)<1$ in \eqref{eq:cond2.1-2}. (Note that $k^\ast =2$ is forbidden because $\al_{1,2}(\theta)>1$.) With the choices made in \eqref{eq:choice-ac} and \eqref{eq:choice-k}, the only condition that is not obvious is the last one for $k=2,3$. It is given by the requirement
	\begin{equation}\label{eq:cond-UR}
		(ca_n)^{-2z}\leq K\lVert R_n\rVert^{-1/2+\iota}.
	\end{equation}
	As $\lVert R_n\rVert\leq K(na_n)^{-1/2}$,  by choosing $z$ close enough to $\frac12$ from above, we can deduce \eqref{eq:cond-UR}  from the assumption  that $a_n^{-1}=O(n^{\al})$ for some $\al<  \frac15$.
\end{proof}

\begin{lemma}\label{lem:posdef}
	Suppose that $I$ is an open interval and $A: I \to \R^{m\times n}$, $\la\mapsto A(\la)$, is a continuous mapping such that $\int_I \lvert A(\la)\rvert^2 \,d\la<\infty$. If $\bigcap_{\la\in I}\ker A(\la)^\T=\{0\}$, then 
	$\int_I A(\la)A(\la)^\T \,d\la$ is positive definite.
\end{lemma}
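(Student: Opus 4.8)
The plan is to verify the defining inequality for positive definiteness directly, namely that the quadratic form associated to $M := \int_I A(\la)A(\la)^\T\,d\la$ is strictly positive away from the origin. First I would observe that $M$ is well defined: every entry of $A(\la)A(\la)^\T$ is dominated in absolute value by $\lvert A(\la)\rvert^2$ up to a fixed constant (by equivalence of norms), and the latter is integrable by hypothesis, so the entrywise integral converges and $M$ is a genuine symmetric matrix in $\R^{m\times m}$. I would then fix an arbitrary $z\in\R^m$ and pull it inside the integral to get
$$ z^\T M z = \int_I z^\T A(\la)A(\la)^\T z\,d\la = \int_I \lvert A(\la)^\T z\rvert^2\,d\la, $$
where $\lvert\cdot\rvert$ denotes the Euclidean norm on $\R^n$ and $A(\la)^\T z\in\R^n$. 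The integrand is a sum of squares, so $z^\T M z\geq 0$, i.e.\ $M$ is positive semidefinite.

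It remains to exclude degeneracy, so I would suppose $z^\T M z = 0$ for some $z\in\R^m$. Then the nonnegative integrand $\la\mapsto \lvert A(\la)^\T z\rvert^2$ has vanishing integral over $I$. This is the one place where continuity of $A$ is used, and it is the crux of the argument: since $A$ is continuous, $\la\mapsto\lvert A(\la)^\T z\rvert^2$ is continuous and nonnegative on the open interval $I$, and such a function with zero integral must vanish identically. Indeed, were it strictly positive at some $\la_0\in I$, continuity would bound it below by a positive constant on a nondegenerate subinterval around $\la_0$ (here openness of $I$ guarantees such a subinterval lies inside $I$), forcing a strictly positive contribution to the integral and a contradiction. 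Hence $A(\la)^\T z = 0$ for every $\la\in I$.

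Finally I would invoke the standing hypothesis $\bigcap_{\la\in I}\ker A(\la)^\T = \{0\}$: the conclusion $A(\la)^\T z = 0$ for all $\la$ says exactly that $z\in\bigcap_{\la\in I}\ker A(\la)^\T$, whence $z = 0$. Combined with the semidefiniteness established above, this shows $z^\T M z > 0$ for every $z\neq 0$, so $M$ is positive definite. I do not anticipate any substantive obstacle beyond the elementary continuity step highlighted above; the only point warranting explicit care is the openness of $I$, which ensures that a neighborhood of each interior point genuinely contributes to the integral and thereby makes the ``zero integral forces identically zero'' step rigorous.
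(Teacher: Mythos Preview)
Your proof is correct and follows essentially the same approach as the paper: both arguments reduce to the observation that $z^\T M z = \int_I \lVert A(\la)^\T z\rVert^2\,d\la$, and use continuity of $A$ together with the kernel hypothesis to show this is strictly positive for $z\neq 0$. The only cosmetic difference is that the paper argues directly (for $z\neq 0$ there exists $\la$ with $A(\la)^\T z\neq 0$, hence the integral is positive) while you phrase the same step contrapositively.
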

\begin{proof}
	Let $z\in\R^m\setminus\{0\}$. Then $z^\T A(\la)A(\la)^\T z = \| A(\la)^\T z\|^2_2\geq0$ for all $\la$, so by continuity of the mapping $A(\la)$, it suffices to find one value of $\la$ (which may depend on $z$) such that $A(\la)^\T z\neq0$, in order to conclude that $\int_I A(\la)A(\la)^\T \,d\la$ is positive definite. But such a  $\la$ exists because  $\bigcap_{\la\in I}\ker A(\la)^\T=\{0\}$.
\end{proof}

\section{Technical Tools}\label{app:technical}

In the following, we denote by $K$ a constant which may vary from line to line but is always independent of $n$. For $K=K(\epsilon,\ldots, \balpha)$ (where the ellipsis signifies that $K$ may depend on further variables), where $\balpha$ takes values in $\cala\subset(-\infty,\infty)^d$, we say that $K$ is \emph{bounded on compacts in $\balpha\in\cala$ for  sufficiently small $\epsilon$} if for each compact set $\mathcal{K}$ in $\cala$, there exists some $\epsilon_0>0$ such that
\begin{equation}
	\sup_{\balpha\in \mathcal{K}} K(\epsilon,\ldots, \balpha) < \infty \label{eqn:K-bounded}
\end{equation} 
for all $\eps\leq \eps_0$. We omit the mentioning of $\cala$ if $\cala=(-\infty,\infty)^d$. 
To ease notation, we use the following convention: If a function $f$ has the form $f=\sum_{i=1}^m f_i$ with $f_i\in\Ga_1(c_i,\al_i,L)$ for all $i$, we   use the notation $\balpha_f=(\alpha_{1},\ldots, \alpha_{m})^\T\in\R^{m}$ for the exponents of $f$. If there are multiple functions $f$  under consideration, we use the abbreviation $\balpha$ to denote the concatenation of  $\balpha_f$ of all considered functions $f$. This notation will only be used if the set of functions it refers to is evident from the context.

\subsection{Trace Approximations of Toeplitz Matrices}\label{sec:toeplitz}

Our first technical result considers trace approximations of products of Toeplitz matrices and their inverses  and is in the same spirit as Theorem 3 and Lemma 8 in  \cite{Lieberman2012_preprint}. In order to cover the array setting considered in this paper, we  extend their result to functions of the form stipulated in Assumption~\ref{ass:gauss-lan-2-ext}. Because the considered functions   are linear combinations of functions that satisfy multiple power-law bounds around zero, potentially with some powers  less than $-1$,  both the statement and the proof will be  substantially harder. Also, as we intend to apply this result to functions that may vary with $n$,   all asymptotic expressions have to be bounded rather explicitly. 

We use $\oplus$ to denote the direct sum of sets. For example, ``$\bigoplus_{i=1}^m \bigcap_{k=1}^q \Ga_1(\cdot)$'' stands for ``can be decomposed into $m$ functions each of which satisfies $q$ bounds as specified by $\Ga_1(\cdot)$.'' 
\begin{theorem}\label{thm:toeplitz-inverse}
	Suppose that $f_l\geq 0$ and $g_l$, for $l=1,\ldots, p$, are functions such that
	\begin{equation}
		\begin{split}
			f_l &\in \bigoplus_{i=1}^m \bigcap_{k=1}^q\Gamma_1(c_{i,k}, \alpha_{i,k}, L),\qquad 
			\frac{1}{f_l} \in \bigoplus_{k=1}^q \bigcap_{i=1}^m  \Gamma_1 ( 1/\ov c_{i,k}, -\ov \alpha_{i,k},L  ),  \\
			g_l &\in \bigoplus_{i=1}^m \bigcap_{k=1}^q\Gamma_1(d_{i,k}, \beta_{i,k}, L)
		\end{split}
		\label{eqn:fg-inverse-bound}
	\end{equation}
	for some $L:(0,\infty)\to(0,\infty)$, some coefficients $c_{i,k},\ov c_{i,k},d_{i,k}>0$ and exponents $\al_{i,k},\ov\al_{i,k},\beta_{i,k}\in\R$ satisfying $\min_k \al_{i,k}<1$ and $\min_k \beta_{i,k}<1$ for all $i$. 
	Further suppose that $i'=i'(i,k)$, $i^\ast=i^\ast(i,k)$, $i^\star=i^\star(i,k)$, $k'=k'(i,k)$, $k^\ast=k^\ast(i,k)$ and $k^\star=k^\star(i,k)$ 
	are indices, possibly depending on $i$ and $k$, such that for some $\eta\in(0,1)$, we have
	\begin{equation}
		\begin{split}
			&\max_k (\beta_{i,k'}-\ov \al_{i',k})_+ + \min_k \ov \al_{i',k}<1-\eta, ~~	\beta_{i,k'} - \ov \alpha_{i',k}<1-\eta,\\
			&p(\beta_{i,k^\ast} - \ov \alpha_{i^\ast,k})_+<1-\eta,~~p  (\alpha_{i,k^\star} - \ov \alpha_{i^\star,k})_+<1-\eta,\\
			&  \beta_{i,k^\ast} < 1-\eta, ~~  \al_{i,k^\star} <1-\eta,\quad  \ov\al_{i^\ast,k} >-1+\eta,~~   \ov\al_{i^\star,k} >-1+\eta,~~  \al'_{i}<1-\eta
		\end{split}
		\label{eqn:fg-inverse-choice}
	\end{equation}
	for all $i=1,\dots,m$ and $k=1,\dots,q$,	with the notation
	\begin{equation} 
		\al'_{i}=\max_{k} \ov \al_{i',k}\bone_{\{\ov \al_{i',k}<1\}}.
	\end{equation}
	Then there exists a constant $K=K(\epsilon, L, m,p,q,\eta, \balpha)$, uniformly bounded on compacts in $\balpha=(\al_{i,k},\ov \al_{i,k}, \beta_{i,k})_{i,k}$ for    sufficiently small $\epsilon>0$, such that
	\begin{equation}\label{eq:statement} 
		 	\left| \frac{1}{n} \tr \left( \prod_{l=1}^p T_n(g_l) T_n(f_l)^{-1} \right)  - \frac{1}{2\pi} \int_{-\pi}^{\pi} \prod_{l=1}^p \frac{g_l(\lambda)}{f_l(\lambda)} \, d\lambda \right|  	\leq \frac{K}{n} \delta_n^\star \big[(\delta^\ast_{n})^p +  (\delta^\star_{n} \delta_{n})^p\big], 
	\end{equation}
	where
	\begin{equation}\label{eq:deltanl} \begin{split}
			\delta_{n}  &= \sum_{i=1}^{m} \sum_{k=1}^q  \frac{d_{i,k'}}{\ov c_{i',k}} (R_{i,k}n)^{ \frac{ (\beta_{i, k'} - \ov \alpha_{i',k} )_+}{1-(\al'_{i})_+}+\eps},\\
			\delta^\ast_{n} &= \sum_{i=1}^m\sum_{k=1}^q \frac{d_{i,k^\ast}}{\ov c_{i^\ast,k}} n^{(\beta_{i,k^\ast}-\ov \al_{i^\ast,k})_++\eps}, \\
			\delta^\star_{n} &=1\vee \sum_{i=1}^m\sum_{k=1}^q \frac{c_{i,k^\star}}{\ov c_{i^\star,k}} n^{(\alpha_{i,k^\star}-\ov \al_{i^\star,k})_++\eps},
		\end{split}
	\end{equation}
	with  
	\begin{equation}
		\label{eq:R} 
			R_{i,k} = \begin{cases} 1\vee \max\limits_{k_1} \min\limits_{k_2} \frac{\ov c_{i'_2,k_2}}{\ov c_{i'_1,k_1}} &\text{if } \beta_{i,k'}-\ov \al_{i',k}+\eta\geq0,\\
				1&\text{otherwise}.\end{cases}  
	\end{equation}
	In \eqref{eq:R}, the maximum is taken over all $k_1$ satisfying $\ov \al_{i'_1,k_1}+(\beta_{i,k'}-\ov \al_{i',k})\geq1-\eta$ and the minimum is taken over all $k_2$ satisfying $\ov \al_{i'_2,k_2}+(\beta_{i,k'}-\ov \al_{i',k})<1-\eta$, where $i'_1=i'(i,k_1)$ and $i'_2=i'(i,k_2)$. In \eqref{eq:R}, we also use the convention $\max\emptyset = -\infty$.
\end{theorem}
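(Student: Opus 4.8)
The plan is to follow the two-step scheme of \cite{Lieberman2012_preprint} (their Theorem~3 and Lemma~8): first replace every inverse $T_n(f_l)^{-1}$ by the Toeplitz matrix $T_n(1/f_l)$ of the reciprocal symbol, and then collapse the resulting product of Toeplitz matrices $\prod_{l=1}^p T_n(g_l)T_n(1/f_l)$ down to the single Toeplitz matrix $T_n(\prod_{l=1}^p g_l/f_l)$, whose normalized trace equals $\frac1{2\pi}\int_{-\pi}^\pi \prod_l g_l/f_l$ exactly (since $\tr T_n(h)=n\int_{-\pi}^\pi h$). Throughout I would use the trace inequalities $|\tr A|\le \|A\|_1$, $\|AB\|_1\le \|A\|_\op\|B\|_1$ and $\|AB\|_1\le\|A\|_2\|B\|_2$ to turn matrix errors into the Hilbert--Schmidt and operator-norm estimates that the corrected multiplication lemma (Lemma~\ref{lem:toeplitz-norm-0}) supplies, and I would expand every symbol into the constituent pieces of \eqref{eqn:fg-inverse-bound}, so that each elementary estimate involves only a single power law $|\lambda|^{-\alpha-\eps}$ with a single coefficient.

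For the inverse-replacement step I would use, for each $l$, the exact identity
\[ T_n(f_l)^{-1} - \tfrac{1}{4\pi^2}T_n(1/f_l) = \tfrac{1}{4\pi^2}\,T_n(f_l)^{-1}\big(4\pi^2 I_n - T_n(f_l)T_n(1/f_l)\big), \]
telescoping over $l$ so that only one inverse is touched at a time. Here $T_n(1)=2\pi I_n$, so $4\pi^2 I_n - T_n(f_l)T_n(1/f_l)$ is precisely the Toeplitz multiplication error for the pair $(f_l,1/f_l)$ whose symbol product is the constant $1$, and is controlled by Lemma~\ref{lem:toeplitz-norm-0}. The factor $\|T_n(f_l)^{-1}\|_\op$ is bounded through the reciprocal bounds on $1/f_l$ and is the source of the prefactor $\delta^\star_n$, whose indices $i^\star,k^\star$ and exponent difference $(\alpha_{i,k^\star}-\ov\alpha_{i^\star,k})_+$ are exactly what is needed to select, at each frequency scale, the dominant among the several bounds on $f_l$ and $1/f_l$. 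The normalization constants from the $p$ reciprocal replacements $(2\pi)^{-2p}$ and the $2p-1$ collapses $(2\pi)^{2p-1}$ combine to the overall factor $\tfrac1{2\pi}$ in \eqref{eq:statement}.

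The product-collapse step is the heart of the argument: I would collapse adjacent pairs $T_n(a)T_n(b)\mapsto 2\pi T_n(ab)$ one at a time, each replacement producing a commutator-type remainder bounded in Hilbert--Schmidt norm by Lemma~\ref{lem:toeplitz-norm-0}. Bounding the trace of the full remainder then amounts to pairing one such remainder with the operator norms of the remaining factors $T_n(g_l)T_n(1/f_l)$. The leading size of each such factor, governed by $i^\ast,k^\ast$ and the difference $(\beta_{i,k^\ast}-\ov\alpha_{i^\ast,k})_+$, produces the powers of $\delta^\ast_n$, while the genuine multiplication discrepancy between $g_l$ and $1/f_l$, governed by $i',k'$ and $(\beta_{i,k'}-\ov\alpha_{i',k})_+$, produces $\delta_n$; the two bracketed terms $(\delta^\ast_n)^p$ and $(\delta^\star_n\delta_n)^p$ reflect the two competing ways the error distributes across the $p$ factors. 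The conditions in \eqref{eqn:fg-inverse-choice}---in particular $\beta_{i,k^\ast}<1-\eta$, $\ov\alpha_{i^\ast,k}>-1+\eta$, $\al'_i<1-\eta$ and the various $(\,\cdot\,)_+<1-\eta$---are precisely the integrability thresholds ensuring that each kernel integral produced by Lemma~\ref{lem:toeplitz-norm-0} converges with the claimed power of $n$.

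The main obstacle I anticipate is the bookkeeping of the multi-scale power laws rather than any single hard estimate. Because each symbol obeys $q$ (or $m$) bounds dominating at different frequencies, every application of Lemma~\ref{lem:toeplitz-norm-0} forces a choice of which bound to use, and the optimal choice switches according to whether the relevant exponent difference is integrable; this is exactly why the indices $i',i^\ast,i^\star,k',k^\ast,k^\star$ are introduced and why the exponent $(\beta_{i,k'}-\ov\alpha_{i',k})_+/(1-(\al'_i)_+)$ and the correction $R_{i,k}$ appear in $\delta_n$ via \eqref{eq:deltanl} and \eqref{eq:R}. The case distinction in $R_{i,k}$ handles the borderline situation in which the naive bound would be non-integrable ($\beta_{i,k'}-\ov\alpha_{i',k}+\eta\ge 0$), forcing a secondary optimization over admissible indices $k_1,k_2$. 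Verifying that these choices are mutually consistent across all telescoping and collapsing steps, and that the logarithmic slack encoded in $L(\cdot)$ together with the $\eps$ in the exponents absorbs every boundary case (exponents near $\pm1$), is where the bulk of the careful---but essentially routine---work lies. Finally, the uniformity of $K$ on compacts in $\balpha$ for small $\eps$ follows because every intermediate bound depends continuously on the exponents and only through finitely many convergent integrals.
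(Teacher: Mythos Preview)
Your two-step plan---replace each $T_n(f_l)^{-1}$ by $T_n((4\pi^2f_l)^{-1})$, then reduce the trace of the resulting Toeplitz product to the integral---is the paper's structure. But your attribution of the three $\delta$-quantities and your description of what Lemma~\ref{lem:toeplitz-norm-0} supplies are wrong in ways that would break the argument.

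The main gap is the claim that ``the factor $\|T_n(f_l)^{-1}\|_\op$ is bounded through the reciprocal bounds on $1/f_l$ and is the source of the prefactor $\delta_n^\star$.'' In the applications one routinely has $f_l(\la)\sim|\la|^\ga$ with $\ga>0$ near the origin, so $\la_{\min}(T_n(f_l))\asymp n^{-\ga}$ and $\|T_n(f_l)^{-1}\|_\op$ grows like a fixed positive power of $n$, whereas $\delta_n^\star$ is only $O(n^\eps)$ when $c_{i,k^\star}/\ov c_{i^\star,k}$ are bounded. The paper never estimates $\|T_n(f_l)^{-1}\|_\op$. Instead it uses cyclic invariance to rewrite the trace through the \emph{symmetric} blocks $A_l=T_n(g_l)^{1/2}T_n(f_l)^{-1}T_n(g_{l+1})^{1/2}$ and $B_l=T_n(g_l)^{1/2}T_n((4\pi^2f_l)^{-1})T_n(g_{l+1})^{1/2}$, so that $T_n(f_l)^{-1/2}$ only ever appears multiplied by some $T_n(g)^{1/2}$. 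The bound $\|T_n(g_l)^{1/2}T_n(f_l)^{-1/2}\|_\op^2\le K\delta_n$ is exactly what Lemma~\ref{lem:toeplitz-norm} (the multi-scale extension of Lemma~\ref{lem:toeplitz-norm-0}) delivers; this is where $\delta_n$ comes from, not from a ``multiplication discrepancy.'' The quantity $\delta_n^\star$ arises as a Frobenius bound on the \emph{symmetric} defect $\Delta_l=I_n-T_n(f_l)^{1/2}T_n((4\pi^2f_l)^{-1})T_n(f_l)^{1/2}$, obtained by applying the trace-approximation Lemma~\ref{thm:toeplitz-integral} to $\tr(\Delta_l^2)$ with $g_l$ replaced by $f_l$ (hence the $c$'s, not $d$'s, in $\delta_n^\star$). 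Finally, Lemma~\ref{lem:toeplitz-norm-0} does not bound Hilbert--Schmidt norms of commutators; the ``collapse'' step is handled by a single application of the Takabatake-type Lemma~\ref{thm:toeplitz-integral} to the multilinearly expanded product $\prod_l T_n(g_{l,i_l})T_n(\ov f_{l,k_l})$, which produces $(\delta_n^\ast)^p$. Thus $\delta_n^\ast$ is the trace-approximation error per factor, not an operator-norm bound on $T_n(g_l)T_n(1/f_l)$. Without the square-root sandwich you would be left with the non-symmetric defect $\Delta_l^\ast=I_n-T_n(f_l)T_n((4\pi^2f_l)^{-1})$, whose Frobenius norm cannot be controlled independently of $\|T_n(f_l)^{\pm1/2}\|_\op$.
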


The proof is given in Section \ref{sec:proofs-toeplitz}. 

\begin{remark}\label{rem}
	In Theorem~\ref{thm:toeplitz-inverse}, we only consider the situation where the coefficients $c_{i,k}$, $\al_{i,k}$, $\ov c_{i,k}$, $\ov\al_{i,k}$, $d_{i,k}$ and $\beta_{i,k}$ do not depend on $l$, as this is all we need for the proof of Theorem~\ref{thm:LAN}. The case of $l$-dependent coefficients can be obtained in a similar manner, but at the expense of much heavier notation. 
\end{remark}

\begin{remark}\label{rem:new}
	For the proof of Theorem~\ref{thm:LAN}, we typically apply Theorem~\ref{thm:toeplitz-inverse} to the case where $g_l$ and $f_l$ do not depend on $l$, $f_l=f^n_\theta$ is the spectral density at stage $n$ and $g_l$ is more or less $b_n$ times a derivative of $f_l$. If we ignore $b_n$ (i.e., assume $b_n\equiv 1$) for now, then by Assumption~\ref{ass:gauss-lan-2-ext}, we have
	\[	\frac{d_{i,k'}}{\ov c_{i',k}} =\frac{c_{i,k'}(n,\theta)}{\ov c_{i',k}(n,\theta)} \leq Kn^\eta,\quad \beta_{i,k'} = \al_{i,k'}(\theta)=\ov\al_{i',k}(\theta)=\ov\al_{i',k},\]
	and because $\frac{1}{\sqrt{Kn^{r}}}\leq\ov c_{i,k}(n,\theta)\leq \sqrt{Kn^{r}}$ by Assumption~\ref{ass:gauss-lan-2-ext}, we also have
	\[R_{i,k}\leq Kn^r\]
	for all $i$ and $k$.
	Therefore,  by choosing $\eps<\eta/(1+r)$, we obtain
	\begin{equation}
		\delta_{n} \leq K n^{2\eta}. 
	\end{equation}
This shows that in our applications of Theorem~\ref{thm:toeplitz-inverse}, $\delta_n$ is of the order $b_n$ times an arbitrarily small power of $n$.

This is also true for $\delta^\ast_n$ and $\delta^\star_n$ if the stronger Assumption~\ref{ass:gauss-lan-2} holds. Indeed,  in this case we can simply choose $i'(i,k)=i^\ast(i,k)=i^\star(i,k)=i$ and $k'(i,k)=k^\ast(i,k)=k^\star(i,k)=i$, which implies $R_{i,k}=1$ and consequently $\delta_n=\delta_n^\ast=\delta_n^\star \leq Kn^{2\eta}$. But the situation in general is different under Assumption~\ref{ass:gauss-lan-2-ext}. In this case, with $i^\ast=i^\star$ and $k^\ast=k^\star$, we have the restrictions $\beta_{i,k^\ast}=\al_{i,k^\star}=\al_{i,k^\ast}(\theta)<1$ and $\ov\al_{i^\ast,k}=\ov\al_{i^\star,k}=\ov\al_{i^\ast,k}(\theta)>-1$. Therefore, in general, $\delta^\ast_n$ and $\delta^\star_n$ will \emph{not} be an arbitrarily small polynomial power in $n$.
\end{remark}

\subsection{Central Limit Theorem for Quadratic Forms}\label{sec:quadratic}

In addition to the approximation results for Toeplitz matrices, we need a technical  result for quadratic forms. 
We consider a compact and convex subset  $\Theta_0$  such that $\theta_0\in\Theta_0\subset \Theta$.
For each $\theta\in\Theta_0$, let $f_n^\theta:\La\to (0,\infty)$ be  symmetric, integrable and positive functions (i.e., each $f_n^\theta$ is the spectral density of a stationary time series). 
Moreover, for each $\theta$ and $l=1,\ldots, p$, we have functions $g^\theta_{n,l}$ that are continuously differentiable with respect to $\theta$. Given a function $L:(0,\infty)\to(0,\infty)$, we assume that
\begin{align}
	\begin{split}
		f_n^\theta,\, \partial_r f_n^\theta &\in \bigoplus_{i=1}^m \bigcap_{k=1}^q\Gamma_1(c_{i,k}(n,\theta), \alpha_{i,k}(\theta), L),\\ 
		\frac{1}{f_n^\theta} &\in \bigoplus_{k=1}^q \bigcap_{i=1}^m  \Gamma_1 ( 1/\ov c_{i,k}(n,\theta), -\ov \alpha_{i,k}(\theta),L  ),  \\
		g^\theta_{n,l},\, \partial_r g^\theta_{n,l} &\in \bigoplus_{i=1}^m \bigcap_{k=1}^q\Gamma_1(d_{i,k}(n,\theta), \beta_{i,k}(\theta), L)\qquad (\partial_r=\partial_{\theta_r}),
	\end{split}\label{eqn:ass-clt-fg}
\end{align}
where for all $i$ and $k$, $\theta\in\Theta_0$ and $n\in\N$, we have $c_{i,k}(n,\theta), \ov c_{i,k}(n,\theta), d_{i,k}(n,\theta)>0$ and $\al_{i,k}(\theta),\ov \al_{i,k}(\theta),\beta_{i,k}(\theta)\in\R$ with $\min_k \al_{i,k}(\theta)<1$ and $\min_k \beta_{i,k}(\theta)<1$. Given indices $i'=i'(i,k)$,  $i^\ast=i^\ast(i,k)$, $i^\star=i^\star(i,k)$, $k'=k'(i,k)$,  $k^\ast=k^\ast(i,k)$ and $k^\star=k^\star(i,k)$ as well as $\theta,\theta'\in\Theta_0$, we define
\[ \al'_{i}(\theta)=\max_{k} \ov \al_{i',k}(\theta)\bone_{\{\ov \al_{i',k}(\theta)<1\}}
\]
and
\begin{equation}\begin{split} 
		\delta_{n}(\theta)    &= \sum_{i=1}^{m} \sum_{k=1}^q  \frac{d_{i,k'}(n,\theta)}{\ov c_{i',k}(n,\theta)} (R_{i,k}(n,\theta)n)^{ \frac{ (\beta_{i, k'}(\theta) - \ov\alpha_{i',k}(\theta) )_+}{1-\al'_{i}(\theta)_+}+\eps} ,\\
		\delta^\ast_{n}(\theta) &= \sum_{i=1}^m\sum_{k=1}^q \frac{d_{i,k^\ast}(n,\theta)}{\ov c_{i^\ast,k}(n,\theta)} n^{(\beta_{i,k^\ast}(\theta)-\ov \al_{i^\ast,k}(\theta))_++\eps}, \\
		\delta^\star_{n} (\theta,\theta')&= 1\vee\sum_{i=1}^m\sum_{k=1}^q \frac{c_{i,k^\star}(n,\theta')}{\ov c_{i^\star,k}(n,\theta)} n^{(\alpha_{i,k^\star}(\theta')-\ov \al_{i^\star,k}(\theta))_++\eps},\qquad \delta^\star_{n} (\theta)=\delta^\star_{n} (\theta,\theta),\\
	\end{split}
\end{equation}
where
\begin{equation}
	\label{eq:R-2} 	R_{i,k}(n,\theta) = \begin{cases} 1\vee \max\limits_{k_1} \min\limits_{k_2} \frac{\ov c_{i'(i,k_2),k_2}(n,\theta)}{\ov c_{i'(i,k_1),k_1}(n,\theta)} &\text{if } \beta_{i,k'}(\theta)-\ov \al_{i',k}(\theta)+\eta\geq0,\\
			1&\text{otherwise},\end{cases}\\
\end{equation}
In the previous display, the   maximum is taken over all $k_1$ satisfying $\ov \al_{i'(i,k_1),k_1}(\theta)+(\beta_{i,k'}(\theta)-\ov \al_{i',k}(\theta))\geq1-\eta$, with $\max\emptyset = -\infty$, while the minimum is taken over all $k_2$ satisfying $\ov \al_{i'(i,k_2),k_2}(\theta)+(\beta_{i,k'}(\theta)-\ov \al_{i',k})(\theta)<1-\eta$.

We assume
that there is $r>0$ and  $K=K(r,\eta)>0$ such that
\begin{equation}
	\begin{split}
		& \max_k (\beta_{i,k'}(\theta)-\ov \al_{i',k}(\theta))_+ + \min_k \ov \al_{i',k}(\theta)<1-\eta, \quad	 \beta_{i,k'}(\theta) - \ov \alpha_{i',k}(\theta)<1-\eta,\\
		&p  (\beta_{i,k^\ast}(\theta) - \ov \alpha_{i^\ast,k}(\theta))_+<1-\eta,\quad  p  (\alpha_{i,k^\star}(\theta) - \ov \alpha_{i^\star,k}(\theta))_+<1-\eta,\quad\al'_{i}(\theta) <1-\eta,\\
		&   \beta_{i,k^\ast}(\theta) < 1-\eta, \quad  \ov\al_{i^\ast,k}(\theta) >-1+\eta, \quad  \al_{i,k^\star}(\theta) <1-\eta,\quad    \ov\al_{i^\star,k}(\theta) >-1+\eta,\\
		&   	|\alpha_{i,k}(\theta) - \alpha_{i,k}(\theta') |  \leq \frac{1}{1+r}\eta^2 , \quad 
		\frac{c_{i,k}(n,\theta')}{c_{i,k}(n,\theta)} \leq K n^\eta,\quad
			\frac{	c_{i,k'}(n,\theta)}{\ov c_{i',k}(n,\theta)}\leq Kn^\eta,\quad \al_{i,k'}(\theta)=\ov\al_{i',k}(\theta)
	\end{split}  
\raisetag{4.5\baselineskip}
	\label{eqn:ass-CLT-c}
\end{equation}
for all $\theta,\theta'\in\Theta_0$, $i=1,\dots,m$ and $k=1,\dots,q$. 
These conditions are analogous to the conditions for Theorem \ref{thm:toeplitz-inverse}, but uniformly in $\theta,\theta'$.

	Finally, we require the functions $g^{\theta}_{n,l}$ and their associated coefficients meet the  symmetry conditions
	\begin{equation}\label{eq:symm} 
		g^{\theta}_{n,l} = g^{\theta}_{n,p-l+1}\quad (l=1,\dots,\lfloor \tfrac p2\rfloor).
	\end{equation}
				
	\begin{theorem}\label{thm:CLT}
		Let $X_n\sim \mathcal{N}(0, T_n(f_n^{\theta_0}))$ and suppose that \eqref{eqn:ass-clt-fg}, \eqref{eqn:ass-CLT-c}  and \eqref{eq:symm} hold.
		Consider
		\begin{equation*}
			Z_n(\theta) 
			=   X_n^\T T_n(f^\theta_n)^{-1} \left(\prod_{l=1}^p \left[T_n(g^{\theta}_{n,l}) T_n(f^\theta_n)^{-1} \right] \right)X_n  - \tr \left( \prod_{l=1}^p \left[T_n(g^{\theta}_{n,l}) T_n(f^\theta_n)^{-1} \right] \right)
		\end{equation*}
		and 
		\begin{equation*}
			\phi_n (\theta_0)
			= \left[\frac{n}{\pi}\int_{-\pi}^{\pi} \prod_{l=1}^p \left(\frac{g^{\theta_0}_{n,l}(\lambda)}{f_n^{\theta_0}(\lambda)}\right)^2 \,d\lambda\right]^{\frac{1}{2}}.
		\end{equation*}
		\begin{enumerate}
			\item[(i)] For any $\epsilon>0$, there exists some $K=K(\epsilon, L, m, p,q,\eta,\balpha)>0$, uniformly bounded on compacts in $\balpha$ for sufficiently small $\eps$, such that
			\begin{equation*}
				\|Z_n(\theta_0)\|_{\psi_1} \leq K \phi_n(\theta_0) + K  \delta_n^\star(\theta_0) \Bigl(    \delta^\ast_{n}(\theta_0)^p +\delta^\star_{n}(\theta_0)^{p}   \delta_{n}(\theta_0)^p\Bigr),
			\end{equation*}
			where $\lVert Z\rVert_{\psi_1}=\inf\{t>0: \E[e^{\lvert Z\rvert/t}]\leq 2\}$ denotes the sub-exponential norm.
			\item[(ii)] If $\phi_n(\theta_0)^{-1} \delta_n^\star(\theta_0)   \big( \delta^\ast_{n}(\theta_0)^p +\delta^\star_{n}(\theta_0)^p   \delta_{n}(\theta_0)^p\big)\to0$ for some $\epsilon>0$, then   $\phi_n(\theta_0)^{-1} Z_n(\theta_0) \stackrel{d}{\longrightarrow}\mathcal{N}(0,1)$.
			\item[(iii)] There exists a constant $K=K(\epsilon, L, m, p,q,\eta,\balpha)>0$, uniformly bounded on compacts in $\balpha$ for sufficiently small $\eps$, such that
			\begin{align*}
					\left\|\sup_{\theta\in\Theta_0}\left|Z_n(\theta) - J_n^*(\theta)\right| \right\|_{\psi_1} 
	&	\leq K \sup_{\theta\in\Theta_0} \lVert\theta-\theta_0\rVert\sup_{\theta\in\Theta_0} \Bigl(\phi_n(\theta_0)+  n^{\frac{1}{2}+3\eta}J_n(\theta)\\
	&\quad+n^{5\eta}(1\vee\delta^\star_n(\theta)\lVert \theta-\theta_0\rVert) \delta^\star_n(\theta)^2\big(  \delta^\ast_{n}(\theta)^p+  \delta^\star_n(\theta)^{p}  \delta_{n}(\theta)^p\big)\Bigr),
			\end{align*}
			where 
			\begin{align*}
				J_n(\theta)^2
				&=\int_{-\pi}^\pi |\lambda|^{-3\eta}  \left\{\prod_{l=1}^p \frac{|g^{\theta}_{n,l}(\lambda)| + \| \nabla g^{\theta}_{n,l}(\lambda)\|}{f_n^{\theta}(\lambda)} \right\}^2 \, d\lambda, \\
				J_n^*(\theta)&= \frac{n}{2\pi} \int_{-\pi}^{\pi}  \frac{f_n^{\theta_0}(\lambda) - f_n^\theta(\lambda)}{f_n^\theta(\lambda)} \prod_{l=1}^p \frac{g^{\theta}_{n,l}(\lambda)}{f_n^\theta(\lambda)} \, d\lambda.  
			\end{align*}
		\end{enumerate}
	\end{theorem}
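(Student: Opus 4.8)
The plan is to treat $Z_n(\theta)$ as a centered second-order Gaussian chaos and to reduce all three assertions to trace estimates delivered by Theorem~\ref{thm:toeplitz-inverse}. Set $M(\theta)=T_n(f_n^\theta)^{-1}\prod_{l=1}^p[T_n(g^\theta_{n,l})T_n(f_n^\theta)^{-1}]$ and $\Sigma_0=T_n(f_n^{\theta_0})$, so that $Z_n(\theta)=X_n^\T M(\theta)X_n-\tr(T_n(f_n^\theta)M(\theta))$ with $X_n\sim\mathcal N(0,\Sigma_0)$. The first thing to observe is that the symmetry condition \eqref{eq:symm} makes $M(\theta_0)$ a symmetric matrix: transposition reverses the order of the factors $T_n(g^{\theta_0}_{n,1}),\dots,T_n(g^{\theta_0}_{n,p})$, and \eqref{eq:symm} leaves the reversed product invariant. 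Hence $B:=\Sigma_0^{1/2}M(\theta_0)\Sigma_0^{1/2}$ is symmetric, and diagonalizing it yields $Z_n(\theta_0)=\sum_j\mu_j(\eta_j^2-1)$ with $(\mu_j)$ the eigenvalues of $B$ and $(\eta_j)$ i.i.d.\ standard normal. Every quantity of interest is then a symmetric function of $(\mu_j)$: the variance equals $2\tr(B^2)$ and the $r$-th cumulant equals $2^{r-1}(r-1)!\,\tr(B^r)$. Because $M(\theta_0)$ is symmetric, $\tr(B^r)=\tr((M(\theta_0)\Sigma_0)^r)$ is the trace of a product of $rp$ Toeplitz pairs $T_n(g^{\theta_0}_{n,l})T_n(f_n^{\theta_0})^{-1}$, which is precisely the object estimated in Theorem~\ref{thm:toeplitz-inverse}.

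For part (i) I would invoke the standard sub-exponential bound for a diagonal Gaussian chaos of order two, $\|Z_n(\theta_0)\|_{\psi_1}\lesssim(\sum_j\mu_j^2)^{1/2}+\max_j|\mu_j|=\|B\|_F+\|B\|_{\op}\lesssim\tr(B^2)^{1/2}$. Applying Theorem~\ref{thm:toeplitz-inverse} with $2p$ factors gives $\tr(B^2)=\tfrac{n}{2\pi}\int_{-\pi}^{\pi}\big(\prod_{l}g^{\theta_0}_{n,l}/f_n^{\theta_0}\big)^2\,d\lambda+O\big(\delta^\star_n[(\delta^\ast_n)^{2p}+(\delta^\star_n\delta_n)^{2p}]\big)$, where the leading term equals $\tfrac12\phi_n(\theta_0)^2$. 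Taking square roots, using $\sqrt{a+b}\le\sqrt a+\sqrt b$ together with $\delta^\star_n\ge1$ (so $\sqrt{\delta^\star_n}\le\delta^\star_n$), converts the main term into a constant multiple of $\phi_n(\theta_0)$ and the error into $K\delta^\star_n\big((\delta^\ast_n)^p+(\delta^\star_n)^p\delta_n^p\big)$, which is exactly the claimed bound.

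For part (ii) I would apply the method of cumulants to $\phi_n(\theta_0)^{-1}Z_n(\theta_0)$. The $r=2$ cumulant is $2\tr(B^2)=\phi_n(\theta_0)^2(1+o(1))$ by the estimate above and the standing hypothesis $\phi_n(\theta_0)^{-1}\delta^\star_n\big((\delta^\ast_n)^p+(\delta^\star_n)^p\delta_n^p\big)\to0$, so the standardized variance tends to $1$. For $r\ge3$ I would use $|\kappa_r|=2^{r-1}(r-1)!\,|\tr(B^r)|\le2^{r-1}(r-1)!\,\|B\|_{\op}^{\,r-2}\tr(B^2)$, whence $\kappa_r(\phi_n(\theta_0)^{-1}Z_n(\theta_0))\le C_r(\|B\|_{\op}/\|B\|_F)^{r-2}$. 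Since $\|B\|_F\asymp\phi_n(\theta_0)$, it suffices to show $\|B\|_{\op}=o(\phi_n(\theta_0))$; the operator norm of the product of Toeplitz matrices defining $B$ is bounded, by the operator-norm estimates underlying Theorem~\ref{thm:toeplitz-inverse} (cf.\ Lemma~\ref{lem:toeplitz-norm-0}), by a quantity of the form $K\delta^\star_n\big((\delta^\ast_n)^p+(\delta^\star_n)^p\delta_n^p\big)$, and the hypothesis makes this $o(\phi_n(\theta_0))$. All cumulants of order $\ge3$ therefore vanish in the limit while $\kappa_2\to1$, and the cumulant CLT gives $\phi_n(\theta_0)^{-1}Z_n(\theta_0)\wconv\mathcal N(0,1)$.

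For part (iii) I would write $Z_n(\theta)-J_n^*(\theta)=\tilde Z_n(\theta)+E_n(\theta)$, where $\tilde Z_n(\theta)=X_n^\T M(\theta)X_n-\tr(\Sigma_0 M(\theta))$ is centered and $E_n(\theta)=\tr\big(T_n(f_n^{\theta_0}-f_n^\theta)M(\theta)\big)-J_n^*(\theta)$ is deterministic. The term $E_n(\theta)$ is the trace-approximation error for the symbol $\tfrac{f_n^{\theta_0}-f_n^\theta}{f_n^\theta}\prod_l\tfrac{g^\theta_{n,l}}{f_n^\theta}$ and is controlled directly by Theorem~\ref{thm:toeplitz-inverse}. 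For the random part I would use a maximal inequality on the compact, finite-dimensional set $\Theta_0$: bound $\tilde Z_n$ at $\theta_0$ by part (i), and bound the increments through the $\theta$-derivatives $\partial_r\tilde Z_n(\theta)=X_n^\T\partial_r M(\theta)X_n-\tr(\Sigma_0\partial_r M(\theta))$. Differentiating $M(\theta)$ (using $\partial_r T_n(f)^{-1}=-T_n(f)^{-1}T_n(\partial_r f)T_n(f)^{-1}$) produces a finite sum of centered quadratic forms whose symbols involve $\partial_r g^\theta_{n,l}$ and $g^\theta_{n,l}\partial_r f_n^\theta$; applying the part-(i) estimate to each, with the $|\lambda|^{-3\eta}$-weighted integral $J_n(\theta)$ absorbing the numerator $|g^\theta_{n,l}|+\|\nabla g^\theta_{n,l}\|$ and the $n^{1/2+3\eta}$ factor accounting for the variance scale and the integrability slack, yields the stated increment bound; the diameter factor $\sup_{\theta}\|\theta-\theta_0\|$ is the Lipschitz scaling supplied by the mean-value step. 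The hard part will be exactly this uniform-in-$\theta$ control: one must differentiate the quadratic form in $\theta$, re-apply the full Toeplitz trace and operator-norm machinery to the (more singular) derivative symbols, and combine the resulting pointwise sub-exponential bounds into a bound on the supremum over $\Theta_0$, all while keeping every error term explicit and uniform in $n$.
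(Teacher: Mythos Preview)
Your proposal is correct and follows essentially the same architecture as the paper: diagonalize the symmetric matrix $B=\Sigma_0^{1/2}M(\theta_0)\Sigma_0^{1/2}$ to write $Z_n(\theta_0)=\sum_k(\epsilon_k^2-1)a_k$, control $\|B\|_F^2=\tr(B^2)$ by Theorem~\ref{thm:toeplitz-inverse} for (i), and for (iii) split $Z_n(\theta)$ into a centered random part plus a deterministic trace-approximation error, then chain over $\Theta_0$ using the Lipschitz bound $\|A_n(\theta_1)-A_n(\theta_2)\|_F\le\|\theta_1-\theta_2\|\sup_\theta\|\partial_r A_n(\theta)\|_F$ and another application of Theorem~\ref{thm:toeplitz-inverse} to the differentiated product. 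The only substantive deviation is in (ii): the paper applies the Lindeberg-type CLT directly to $\sum_k(\epsilon_k^2-1)a_k$, verifying $\phi_n^{-1}\max_k a_k\to0$ and $2\phi_n^{-2}\sum_k a_k^2\to1$, whereas you propose the method of cumulants. Both routes are standard and equivalent here; the paper's is marginally shorter because the negligibility of $\max_k a_k$ is already needed anyway. One small imprecision: the operator-norm bound you need is $\|B\|_{\op}\le K\delta_n(\theta_0)^p$, which comes from Lemma~\ref{lem:toeplitz-norm} (the multi-term extension), not Lemma~\ref{lem:toeplitz-norm-0}; your stated bound $K\delta_n^\star\big((\delta_n^\ast)^p+(\delta_n^\star)^p\delta_n^p\big)$ is weaker but, since $\delta_n^\star\ge1$, it is implied by $K\delta_n^p$ and still suffices under the hypothesis of (ii).
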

	The proof is given in Section \ref{sec:proofs-quadratic}.

\section{Proofs}\label{sec:proofs}

\subsection{A Key Lemma}\label{sec:key}

One of the main ingredients in the proofs is a lemma concerning the operator norm of Toeplitz matrices that first appeared in \cite{taqqu_toeplitz_1987} (p.\ 80) and \cite{Dahlhaus1989} (Lemma~5.3). This lemma is further used in \cite{Lieberman2012} (Lemma 6 in the preprint \cite{Lieberman2012_preprint}) and \cite{Cohen2013} (Lemma 3.1) to derive the main results of their papers. However, the proofs presented in \cite{taqqu_toeplitz_1987} and \cite{Dahlhaus1989} are erroneous. In this section, we first give a corrected version of this lemma and then extend it to functions satisfying Assumption~\ref{ass:gauss-lan-2}. Our new proof is based on the following auxiliary result.

\begin{lemma}\label{lem:aux-monotone}
	Let $f:(0,\pi) \to (0,\infty)$ be integrable and   $\mathcal{P}_n$ be the class of probability densities on $(0,\pi)$ that are bounded by $n$.
	For any $n\in\N$ and $\kappa>0$, we have 
	\begin{equation}\label{eq:hsup-res}\begin{split}
		&\sup_{h\in \mathcal{P}_n} \frac{\int_0^\pi f(\lambda) \lambda^{-\kappa} h(\lambda)\, d\lambda}{\int_0^\pi f(\lambda) h(\lambda)\, d\lambda} \\
&\quad		\leq \sup\left\{ M \geq 2^{1+\kappa}n^\kappa : n\int_0^{M^{-\frac{1}{\kappa}}} f(\lambda) \lambda^{-\kappa} \, d\lambda >  \frac{M}{4}\inf_{\la\in(\frac1{2n},\pi)} f(\lambda)  \right\} \vee 2^{1+\kappa}n^\kappa, 
		\end{split}
	\end{equation}
	with the convention $\sup\emptyset = 0$.
	For the special case $f(\lambda)=\lambda^{-\alpha}$ with $\alpha<1-\kappa$, the upper bound is $\mathcal{O}( n^{\frac{\kappa}{1-\alpha_+}})$.
\end{lemma}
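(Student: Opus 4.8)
The plan is to treat this as an extremal problem: write $\Phi_n$ for the left-hand supremum and show it cannot exceed the right-hand side by proving that every value strictly below $\Phi_n$ but above the baseline $2^{1+\kappa}n^\kappa$ lies in the set over which the supremum on the right is taken. The entire argument rests on one structural feature of the constraint $h\le n$: a capped density cannot concentrate mass near the origin, since $\int_0^{1/(2n)} h(\lambda)\,d\lambda \le n\cdot\tfrac{1}{2n}=\tfrac12$, so at least half of the unit mass always sits on $(1/(2n),\pi)$. This is exactly what keeps the ratio finite and, when made quantitative, produces the stated bound.

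Concretely, I would first dispose of the trivial case $\Phi_n\le 2^{1+\kappa}n^\kappa$, which is covered by the $\vee\,2^{1+\kappa}n^\kappa$ on the right. Assuming $\Phi_n>2^{1+\kappa}n^\kappa$, I fix any $M'$ with $2^{1+\kappa}n^\kappa<M'<\Phi_n$ and choose $h\in\mathcal{P}_n$ whose ratio exceeds $M'$, i.e. $\int_0^\pi f(\lambda)(\lambda^{-\kappa}-M')h(\lambda)\,d\lambda>0$. Splitting at the crossover point $\lambda_\ast=(M')^{-1/\kappa}$, where $\lambda^{-\kappa}=M'$, gives
\[ \int_0^{\lambda_\ast} f(\lambda)(\lambda^{-\kappa}-M')h(\lambda)\,d\lambda \;>\; \int_{\lambda_\ast}^\pi f(\lambda)(M'-\lambda^{-\kappa})h(\lambda)\,d\lambda, \]
with both sides nonnegative. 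I bound the left side using $\lambda^{-\kappa}-M'\le\lambda^{-\kappa}$ and $h\le n$ to obtain $n\int_0^{\lambda_\ast} f(\lambda)\lambda^{-\kappa}\,d\lambda$. For the right side I first record the two elementary inequalities $\lambda_\ast<\tfrac1{2n}$ and $2^{1/\kappa}\lambda_\ast<\tfrac1{2n}$, both immediate from $M'>2^{1+\kappa}n^\kappa$; the first lets me shrink the domain to $(1/(2n),\pi)$ without loss, while the second forces $M'-\lambda^{-\kappa}\ge M'/2$ there. Combining with the mass bound $\int_{1/(2n)}^\pi h\ge\tfrac12$ yields the lower bound $\tfrac{M'}{4}\inf_{\lambda\in(1/(2n),\pi)}f(\lambda)$. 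Chaining the three estimates gives precisely $n\int_0^{(M')^{-1/\kappa}} f(\lambda)\lambda^{-\kappa}\,d\lambda>\tfrac{M'}{4}\inf_{(1/(2n),\pi)}f$, so $M'$ belongs to the defining set; letting $M'\uparrow\Phi_n$ gives $\Phi_n\le\sup\{\cdots\}$. (If $f\lambda^{-\kappa}$ fails to be integrable at $0$, both sides are infinite and nothing is to prove.)

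For the special case $f(\lambda)=\lambda^{-\alpha}$ with $\alpha<1-\kappa$, I would evaluate the defining inequality outright: $n\int_0^{M^{-1/\kappa}}\lambda^{-\alpha-\kappa}\,d\lambda=\tfrac{n}{1-\alpha-\kappa}M^{-(1-\alpha-\kappa)/\kappa}$, while $\inf_{(1/(2n),\pi)}\lambda^{-\alpha}$ equals $\pi^{-\alpha}$ for $\alpha\ge0$ and $(2n)^{\alpha}$ for $\alpha<0$. Solving for $M$ and using $1+\tfrac{1-\alpha-\kappa}{\kappa}=\tfrac{1-\alpha}{\kappa}$ gives $M\lesssim (n/\inf)^{\kappa/(1-\alpha)}$, which is $O(n^{\kappa/(1-\alpha)})=O(n^{\kappa/(1-\alpha_+)})$ when $\alpha\ge0$ and $O(n^{\kappa})=O(n^{\kappa/(1-\alpha_+)})$ when $\alpha<0$. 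Since the baseline $2^{1+\kappa}n^\kappa$ is dominated in either case, the stated order follows.

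I expect the only genuine subtlety to be the bookkeeping of constants: the baseline $2^{1+\kappa}n^\kappa$ and the cutoff $1/(2n)$ must be chosen so that $\lambda_\ast<1/(2n)$ and $2^{1/\kappa}\lambda_\ast<1/(2n)$ hold simultaneously, the mass on $(0,1/(2n))$ is at most $\tfrac12$, and the infimum ends up being taken over exactly $(1/(2n),\pi)$ with the clean factor $\tfrac14$. Everything beyond this calibration is a routine two-region splitting estimate.
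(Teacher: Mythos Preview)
Your proof is correct and follows essentially the same approach as the paper's: both rewrite the condition $\Phi_n>M'$ as $\int_0^\pi f(\lambda)(\lambda^{-\kappa}-M')h(\lambda)\,d\lambda>0$ for some $h\in\mathcal P_n$, split at $\lambda_\ast=(M')^{-1/\kappa}$, bound the positive part by $n\int_0^{\lambda_\ast}f\lambda^{-\kappa}$ using $h\le n$, and bound the negative part by $-\tfrac{M'}{4}\inf_{(1/(2n),\pi)}f$ using the mass constraint $\int_{1/(2n)}^\pi h\ge\tfrac12$ together with $\lambda^{-\kappa}\le 2^\kappa n^\kappa<M'/2$ on $(1/(2n),\pi)$. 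The paper phrases the argument as ``if the defining inequality fails for $M>2^{1+\kappa}n^\kappa$, then $Q_n^*\le M$'', while you phrase it as ``if $M'<\Phi_n$ and $M'>2^{1+\kappa}n^\kappa$, then $M'$ lies in the defining set''; these are contrapositives of one another, and your treatment of the special case $f(\lambda)=\lambda^{-\alpha}$ matches the paper's computation.
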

\begin{proof}
	Let $Q(h)=\frac{\int_0^\pi f(\lambda) \lambda^{-\kappa} h(\lambda)\, d\lambda}{\int_0^\pi f(\lambda) h(\lambda)\, d\lambda}$.
	For any $M>0$, we have 
	\begin{align*}
		Q_n^*:=\sup_{h\in\mathcal{P}_n} Q(h) > M 
		\qquad \iff \qquad \sup_{h\in \mathcal{P}_n} \int_0^\pi f(\lambda) \left[ \lambda^{-\kappa}-M \right] h(\lambda)\, d\lambda> 0. 
	\end{align*}
	Thus, 
	\begin{align}
		Q_n^* = \sup\left\{ M \geq \pi^{-\kappa} : \sup_{h\in \mathcal{P}_n} \int_0^\pi f(\lambda) \left[ \lambda^{-\kappa}-M \right] h(\lambda)\, d\lambda> 0 \right\}. \label{eq:qsup}
	\end{align}
	This formulation is based on a proof idea in   \cite{Lieberman2012_preprint}. 
	Next, observe that the function $\lambda\mapsto f(\lambda)\left[ \lambda^{-\kappa}-M \right]$ is non-negative if and only if $\lambda\leq  M^{-\frac{1}{\kappa}}$. 
	Thus, for any $h\in \mathcal{P}_n$ and $M>2^{1+\kappa} n^{\kappa}$,
	\begin{align*}
	&\int_0^\pi f(\lambda) \left[ \lambda^{-\kappa}-M \right] h(\lambda)\, d\lambda 	\\
	&\quad\leq n\int_0^{M^{-\frac{1}{\kappa}}} f(\lambda) \left[\lambda^{-\kappa} - M\right]\, d\lambda + \int_{\frac{1}{2n}}^\pi h(\lambda)\, d\lambda  \sup_{\lambda\in (\frac{1}{2n}, \pi)} f(\lambda)[\lambda^{-\kappa}-M], 
	\end{align*}
	where we used the bound $\lvert h\rvert\leq n$ for the first integral and the fact that the integral on $(M^{-1/\kappa},\pi)$ is negative and can therefore be shortened to an integral on $(\frac1{2n},\pi)$ (note that $M^{-1/\kappa}<\frac1{2n}$ since $M>2^{1+\kappa}n^\kappa$) to get an upper bound. For any $h\in\calp_n$, we have $\int_{\frac1{2n}}^\pi h(\la)\,d\la \geq \frac12$ (because $\int_0^{\frac1{2n}} h(\la)\,d\la \leq \int_0^{\frac1{2n}} n\,d\la \leq \frac12$). Thus, recalling that the last term in the previous display is negative, we obtain
	\begin{align*}
		\int_0^\pi f(\lambda) \left[ \lambda^{-\kappa}-M \right] h(\lambda)\, d\lambda 	
		&\leq n\int_0^{M^{-\frac{1}{\kappa}}} f(\lambda) \lambda^{-\kappa}\, d\lambda +  \frac12 (2^\kappa n^{\kappa}-M) \inf_{\la\in(\frac1{2n},\pi)} f(\la) \\
		&\leq n\int_0^{M^{-\frac{1}{\kappa}}} f(\lambda) \lambda^{-\kappa} \, d\lambda - \frac{M}{4} \inf_{\la\in(\frac1{2n},\pi)} f(\la).
	\end{align*}
	Using this in \eqref{eq:qsup}, we obtain \eqref{eq:hsup-res}. 
	If $f(\la)=\la^{-\al}$ for some $\al<1-\kappa$,  the right-hand side of \eqref{eq:hsup-res} equals
	\begin{align*}
		&\sup\left\{ M\geq 2^{1+\kappa}n^\kappa: n M^{-\frac{1-\al-\kappa}{\kappa}} >\tfrac M4(1-\al-\kappa)(\pi^{-\al}\bone_{\{\al>0\}} + (2n)^{\al}\bone_{\{\al<0\}}) \right\} \vee 2^{1+\kappa}n^\kappa\\
		&\quad=\left( \left(\frac{1-\al-\kappa}{4} \pi^\al\right)^{-\frac{\kappa}{1-\al}} n^{\frac{\kappa}{1-\al}} \bone_{\{\al>0\}} + \left(\frac{1-\al-\kappa}{2^{2-\al}} \pi^\al\right)^{-\frac{\kappa}{1-\al}}n^\kappa\bone_{\{\al<0\}} \right)\vee 2^{1+\kappa}n^\kappa\\
		&\quad = O\left(n^{\frac{\kappa}{1-\al_+}}\right),
	\end{align*}
	which completes the proof of the lemma.
\end{proof}

The following is a substitute for the lemma on p.\ 80 of \cite{taqqu_toeplitz_1987} and Lemma~5.3 in \cite{Dahlhaus1989}. We denote  the $\ell_2$ operator norm of a matrix by $\|\cdot\|_{\op}$.
\begin{lemma}\label{lem:toeplitz-norm-0}
	Let $g:(-\pi,\pi) \to [0,\infty)$ and $f:(-\pi,\pi) \to (0,\infty)$ be symmetric integrable functions such that $g\in \Gamma_0(d, \beta,L)$ and $1/f \in \Gamma_0(c^{-1}, -\alpha,L)$ for some $c,d>0$, $\al,\beta\in(-\infty,1)$ and $L:(0,\infty)\to(0,\infty)$. Then there exists $K=K(\epsilon,  L, \alpha, \beta)$ that is bounded on compacts in $(\al,\beta)\in(-\infty,1)^2$ for sufficiently small $\eps$ such that 
	\begin{equation}\label{eq:bound-DH} 
		\|T_n(f)^{-\frac{1}{2}} T_n(g)^\frac{1}{2}  \|_{\op}^2 = \| T_n(g)^\frac{1}{2} T_n(f)^{-\frac{1}{2}} \|_{\op}^2  \leq K     \frac dc n^{\frac{(\beta-\alpha)_+}{1-\alpha_+}+\epsilon}.
	\end{equation}
\end{lemma}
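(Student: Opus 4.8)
The plan is to reduce the operator-norm bound to a Rayleigh-quotient estimate and then invoke the auxiliary density bound of Lemma~\ref{lem:aux-monotone}. First I would note that $T_n(g)$ and $T_n(f)$ are real symmetric with $T_n(g)$ positive semidefinite (since $g\geq 0$) and $T_n(f)$ positive definite (since $f>0$), so the square roots are well defined and
\[
\| T_n(g)^{1/2} T_n(f)^{-1/2} \|_{\op}^2 = \lambda_{\max}\bigl(T_n(f)^{-1/2} T_n(g) T_n(f)^{-1/2}\bigr) = \sup_{x\neq 0} \frac{x^\T T_n(g) x}{x^\T T_n(f) x},
\]
the last equality following from the substitution $y = T_n(f)^{1/2}x$. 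The asserted equality of the two operator norms is immediate because the two matrices $T_n(f)^{-1/2}T_n(g)^{1/2}$ and $T_n(g)^{1/2}T_n(f)^{-1/2}$ are transposes of one another.

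Second, for a symmetric integrable $h$ and $x\in\R^n$ I would use the quadratic-form identity $x^\T T_n(h) x = \int_{-\pi}^\pi h(\lambda) P(\lambda)\,d\lambda$ with $P(\lambda) = \lvert\sum_{j=1}^n x_j e^{\im j\lambda}\rvert^2$. The crucial structural fact is that $P$ is a nonnegative trigonometric polynomial satisfying the uniform pointwise bound $P(\lambda)\leq n\lVert x\rVert^2$ (by Cauchy--Schwarz) together with $\tfrac1{2\pi}\int_{-\pi}^\pi P = \lVert x\rVert^2$ (by Parseval). Replacing $P$ by its symmetrization $\bar P(\lambda)=\tfrac12(P(\lambda)+P(-\lambda))$, which is legitimate since $f$ and $g$ are symmetric, and folding onto $(0,\pi)$, the normalized density $h_x = \bar P / (\pi\lVert x\rVert^2)$ lies in $\mathcal{P}_n$, as it is bounded by $n/\pi\leq n$ and integrates to $1$ on $(0,\pi)$. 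Consequently the Rayleigh quotient equals $\int_0^\pi g\, h_x / \int_0^\pi f\, h_x$.

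Third, I would insert the $\Gamma_0$ bounds $g(\lambda) \leq dL(\epsilon)\lvert\lambda\rvert^{-\beta-\epsilon}$ and $f(\lambda)\geq cL(\epsilon)^{-1}\lvert\lambda\rvert^{-\alpha+\epsilon}$, which (since $h_x\geq 0$) yield
\[
\frac{x^\T T_n(g) x}{x^\T T_n(f) x} \leq \frac{d}{c}L(\epsilon)^2 \sup_{h\in\mathcal{P}_n}\frac{\int_0^\pi \lvert\lambda\rvert^{-\beta-\epsilon}h(\lambda)\,d\lambda}{\int_0^\pi \lvert\lambda\rvert^{-\alpha+\epsilon}h(\lambda)\,d\lambda}.
\]
Writing $\lvert\lambda\rvert^{-\beta-\epsilon} = \lvert\lambda\rvert^{-(\alpha-\epsilon)}\lvert\lambda\rvert^{-\kappa}$ with $\kappa = (\beta-\alpha)+2\epsilon$, Lemma~\ref{lem:aux-monotone} with base function $\lvert\lambda\rvert^{-(\alpha-\epsilon)}$ bounds the supremum by $O\bigl(n^{\kappa/(1-(\alpha-\epsilon)_+)}\bigr)$ whenever $\kappa>0$ and $(\alpha-\epsilon)+\kappa = \beta+\epsilon<1$, the latter holding for small $\epsilon$ since $\beta<1$. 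When $\beta\leq\alpha$ the exponent $\kappa$ may be nonpositive, but then the elementary bound $\lvert\lambda\rvert^{-\beta-\epsilon}\leq \pi^{(\alpha-\beta)-2\epsilon}\lvert\lambda\rvert^{-\alpha+\epsilon}$ on $(0,\pi)$ gives an $O(1)$ estimate directly, consistent with $(\beta-\alpha)_+=0$. Since $(\alpha-\epsilon)_+\leq\alpha_+$, the exponent obeys $\tfrac{\kappa}{1-(\alpha-\epsilon)_+}\leq \tfrac{(\beta-\alpha)_+}{1-\alpha_+} + \tfrac{2\epsilon}{1-\alpha_+}$, so after relabelling $\epsilon$ I recover the claimed rate $n^{(\beta-\alpha)_+/(1-\alpha_+)+\epsilon}$, with $K$ absorbing $L(\epsilon)^2$ and the constant from Lemma~\ref{lem:aux-monotone}, both bounded on compacts in $(\alpha,\beta)\in(-\infty,1)^2$ for small $\epsilon$.

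The only genuinely delicate point is the second step: identifying the Rayleigh quotient of the pair of Toeplitz forms with a ratio of integrals against a density in $\mathcal{P}_n$. This is precisely the bridge that makes the abstract estimate of Lemma~\ref{lem:aux-monotone} applicable, and it is exactly where the original arguments of \cite{taqqu_toeplitz_1987} and \cite{Dahlhaus1989} break down; the uniform pointwise bound $P\leq n\lVert x\rVert^2$ together with the $L^1$-normalization is what must be established with care. The remainder is routine $\epsilon$-bookkeeping to pass from the internal exponents to the stated rate.
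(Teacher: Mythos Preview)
Your proof is correct and follows essentially the same route as the paper's: reduce the operator norm to the Rayleigh quotient $\sup_{v\neq 0} v^\T T_n(g)v / v^\T T_n(f)v$, rewrite the quadratic forms as integrals against the normalized trigonometric density (which lies in $\mathcal{P}_n$ via Cauchy--Schwarz and Parseval), insert the $\Gamma_0$ bounds, and invoke Lemma~\ref{lem:aux-monotone}. One small correction to your closing commentary: the error in \cite{taqqu_toeplitz_1987} and \cite{Dahlhaus1989} is not in the reduction to $\mathcal{P}_n$ (that step is fine in their arguments too) but rather in the subsequent evaluation of $\sup_{h\in\mathcal{P}_n}$, where they incorrectly claim the maximizer is $n\bone_{(0,1/n)}$ and thereby obtain the too-small exponent $(\beta-\alpha)_+$ instead of $(\beta-\alpha)_+/(1-\alpha_+)$ --- see Remark~\ref{rem:Dahlhaus}.
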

\begin{proof}
	The equality in \eqref{eq:bound-DH} holds because the operator norm is invariant under transposition and the matrices are symmetric.
	Moreover, as $T_n(f)$ is invertible (because $f>0$) and both $f$ and $g$ are symmetric, we find that
	\begin{equation}\label{eqn:toeplitz-sup}\begin{split}
		&	\| T_n(g)^\frac{1}{2} T_n(f)^{-\frac{1}{2}} \|_{\op}^2 =\sup_{x\neq 0} \frac{\lVert T_n( g)^\frac{1}{2} T_n(f)^{-\frac{1}{2}}x\rVert_2^2}{\lVert x\rVert_2^2}=\sup_{v\neq 0} \frac{\lVert T_n(g)^\frac{1}{2} v\rVert_2^2}{\lVert T_n(f)^{\frac12}v\rVert_2^2}=\sup_{v\neq0} \frac{ v^\T T_n(g)  v }{v^\T T_n(f) v}  \\
			&\quad=\sup_{\lVert v\rVert=1} \frac{\int_{0}^\pi g(\la)\bigl\lvert \sum_{j=1}^n v_je^{-ij\la}\bigr\rvert^2 \,d\la} {\int_{0}^\pi f(\la)\bigl\lvert \sum_{j=1}^n v_je^{-ij\la}\bigr\rvert^2 \,d\la}\leq \sup_{\lVert v\rVert=1} \frac{\int_{0}^\pi   g(\la)\bigl\lvert \sum_{j=1}^n v_je^{-ij\la}\bigr\rvert^2 \,d\la} {\int_{0}^\pi   f(\la)\bigl\lvert \sum_{j=1}^n v_je^{-ij\la}\bigr\rvert^2 \,d\la}  \\
			&\quad\leq \sup_{h\in\mathcal{P}_n} \frac{\int_{0}^{\pi} g(\lambda) h(\lambda)\, d\lambda}{\int_{0}^{\pi} f(\lambda) h(\lambda)\, d\lambda} 
			\leq \frac{d}{c} L(\delta)^2 \sup_{h\in\mathcal{P}_n} \frac{\int_{0}^{\pi} \lambda^{-\beta - \delta} h(\lambda)\, d\lambda}{\int_{0}^{\pi} \lambda^{-\alpha + \delta} h(\lambda)\, d\lambda}, 
		\end{split}
	\end{equation}
	where $\delta=\frac\epsilon2$  and 
	$\mathcal{P}_n = \{h : \text{$h$ is a probability density on $[0,\pi]$ with $|h|\leq n$}\}$. If $\beta<\al$, we also have $\beta+\delta<\alpha-\delta$ upon choosing $\eps$ small enough, and we can conclude by bounding $\la^{-\beta-\delta}\leq \pi^{\al-\beta-2\delta} \la^{-\al+\delta}$. If $\beta>\al$, take $\eps$ small enough such that $\beta+\delta<1$. Then we can use Lemma \ref{lem:aux-monotone} with $\kappa=\beta-\al+2\delta$ to obtain \eqref{eq:bound-DH}.
\end{proof}

\begin{remark}\label{rem:Dahlhaus}
	Unlike Lemma \ref{lem:toeplitz-norm-0} above, where the upper bound is of order  $n^{(\beta-\alpha)_+/(1-\alpha_+)}$, the lemma on p.\ 80 in \cite{taqqu_toeplitz_1987}  and Lemma~5.3 in \cite{Dahlhaus1989} erroneously presents a smaller upper bound of the order $n^{(\beta-\alpha)_+}$.
	The proof of the latter proceeds along the same lines as above, but it is  claimed that the final supremum in \eqref{eqn:toeplitz-sup} is attained at $h_n(\lambda) = n \bone_{(0,1/n)}(\la)$, leading to the smaller bound. 
	As a counterexample, let $\ov\beta=\beta +\delta= \frac{2}{3}$, $\ov\alpha=\alpha-\delta=\frac{1}{2}$ and $h(\lambda) = n \bone_{(0,n^{-2})}(\la) + n \bone_{(1-n^{-1}+n^{-2},1)}(\la)$. 
	Then the final quotient in \eqref{eqn:toeplitz-sup} is of order $n^{{1}/{3}} = n^{(\ov\beta-\ov\alpha)/(1-\ov\alpha)}$, which is bigger than $n^{1/6}=n^{\ov\beta-\ov\alpha}$.
\end{remark}

Due to the array setting considered in this paper, we need an extension of Lemma~\ref{lem:toeplitz-norm-0} to functions satisfying Assumption~\ref{ass:gauss-lan-2} or Assumption~\ref{ass:gauss-lan-2-ext}.

\begin{lemma}\label{lem:toeplitz-norm}
	For $l=1,\ldots, p$, let $g_l$ and $f_l$ be as in Theorem \ref{thm:toeplitz-inverse}, and suppose furthermore that $f_l> 0$ and $g_l\geq 0$.
	There exists $K=K( \epsilon,L, q, \eta,\balpha)$, uniformly bounded on compacts in $\balpha$ for sufficiently small $\epsilon>0$ such that
	\begin{equation}\label{eq:bound-DH-2}\begin{split}
			\| T_n(g_l)^{\frac{1}{2}} T_n(f_l)^{-\frac{1}{2}}  \|^2_{\op} =  \|  T_n(f_l)^{-\frac{1}{2}} T_n(g_l)^{\frac{1}{2}}  \|^2_{\op} &\leq K  \delta_{n},\\
			\| T_n(g_{l+1})^{\frac{1}{2}} T_n(f_l)^{-\frac{1}{2}}  \|^2_{\op} =  \|  T_n(f_l)^{-\frac{1}{2}} T_n(g_{l+1})^{\frac{1}{2}}  \|^2_{\op} &\leq K  \delta_{n}.
	\end{split}\end{equation}
\end{lemma}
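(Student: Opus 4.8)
The plan is to follow the template of Lemma~\ref{lem:toeplitz-norm-0}, reducing each operator norm to a supremum of a ratio of weighted integrals over the class $\mathcal{P}_n$ of densities bounded by $n$, and then to control that ratio by decomposing the numerator and denominator into single power-law pieces to which the auxiliary estimate of Lemma~\ref{lem:aux-monotone} applies.

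First I would note that the two operator norms in each line coincide, since $\|A\|_{\op}=\|A^\T\|_{\op}$ and all Toeplitz matrices involved are symmetric. Because $f_l>0$, the computation \eqref{eqn:toeplitz-sup} gives, exactly as before,
\[
\|T_n(g_l)^{\frac12}T_n(f_l)^{-\frac12}\|_{\op}^2=\sup_{v\neq0}\frac{v^\T T_n(g_l)v}{v^\T T_n(f_l)v}\leq\sup_{h\in\mathcal{P}_n}\frac{\int_0^\pi g_l(\lambda)h(\lambda)\,d\lambda}{\int_0^\pi f_l(\lambda)h(\lambda)\,d\lambda}.
\]
The second asserted inequality, with $g_{l+1}$ in place of $g_l$ over $f_l$, is proved identically: by Remark~\ref{rem} the classes $\bigcap_{k}\Gamma_1(d_{i,k},\beta_{i,k},L)$ and their coefficients do not depend on $l$, so $g_{l+1}$ admits the very same power-law bounds as $g_l$ and the resulting estimate is again $\delta_n$.

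Next I would split the numerator along $g_l=\sum_{i=1}^m g_{l,i}$ with $g_{l,i}\in\bigcap_{k}\Gamma_1(d_{i,k},\beta_{i,k},L)$. Since $h\geq0$, we have $\int g_l h\leq\sum_i\int|g_{l,i}|h$, and because the denominator is common the supremum distributes, bounding the ratio by $\sum_{i=1}^m\sup_{h\in\mathcal{P}_n}\int|g_{l,i}|h/\int f_l h$, which already produces the outer sum over $i$ in $\delta_n$. For fixed $i$ I would then insert the $q$ pointwise bounds $|g_{l,i}(\lambda)|\leq L(\delta)\min_k d_{i,k}|\lambda|^{-\beta_{i,k}-\delta}$ and, using $1/f_l=\sum_k\overline f_k$ with $\overline f_k\in\bigcap_i\Gamma_1(1/\overline c_{i,k},-\overline\alpha_{i,k},L)$, the lower bound $f_l(\lambda)\geq(qL(\delta))^{-1}\min_k\max_i\overline c_{i,k}|\lambda|^{-\overline\alpha_{i,k}+\delta}$. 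Both sides are minima of power laws; partitioning $(0,\pi)$ into the finitely many frequency bands $A_k$ on which a single term realizes the denominator minimum, and bounding $\int f_l h$ below by its restriction to each band, reduces each contribution to a quotient of the form $\int_{A_k}|\lambda|^{-\beta_{i,k'}}h/\int_{A_k}|\lambda|^{-\overline\alpha_{i',k}}h$ with optimally chosen indices $k'=k'(i,k)$ and $i'=i'(i,k)$. This yields the coefficient $d_{i,k'}/\overline c_{i',k}$, the gap $(\beta_{i,k'}-\overline\alpha_{i',k})_+$, and the second sum over $k$ in \eqref{eq:deltanl}.

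The hard part will be the last reduction. On each band I would invoke Lemma~\ref{lem:aux-monotone}, but its right-hand side \eqref{eq:hsup-res} measures the singularity through the behavior of the denominator near the origin together with $\inf_{(1/2n,\pi)}f$, and these need not be governed by the band exponent $\overline\alpha_{i',k}$: the most singular integrable term of the lower bound, with exponent $\alpha'_i=\max_k\overline\alpha_{i',k}\bone_{\{\overline\alpha_{i',k}<1\}}$, dictates the near-zero mass and hence the denominator $1-(\alpha'_i)_+$ in the exponent of $\delta_n$. When the adversarial density concentrates on a band whose active power law differs from the one controlling the origin — in particular across the transition between integrable ($\overline\alpha<1$) and non-integrable ($\overline\alpha\geq1$) terms — the ratio of the corresponding coefficients $\overline c$ must be carried along, and tracking it through the critical scale $M^{-1/\kappa}$ in \eqref{eq:hsup-res} is exactly what forces the replacement of $n$ by $R_{i,k}n$ in \eqref{eq:R}. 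The genuine effort is this combinatorial matching of numerator and denominator exponents across bands, together with the uniform control of the constants on compacts in $\balpha$, the latter inherited from the explicit estimates in Lemma~\ref{lem:aux-monotone} and Lemma~\ref{lem:toeplitz-norm-0}; once the single-power-law quotient is isolated on each band, the remaining steps are routine.
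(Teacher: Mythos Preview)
Your overall scaffolding matches the paper's: reduce to $\sup_{h\in\mathcal{P}_n}\int g_l h/\int f_l h$, split along $g_l=\sum_i g_{l,i}$, bound $f_l$ below via $F_i(\lambda)=\min_k\overline c_{i',k}\lambda^{-\overline\alpha_{i',k}+\delta}$, and finish with Lemma~\ref{lem:aux-monotone}. The gap is in your reduction step. Partitioning $(0,\pi)$ into frequency bands $A_k$ on which a single term realizes $F_i$, restricting the denominator integral to each band, and invoking Lemma~\ref{lem:aux-monotone} bandwise does not work as written: after restriction, $h|_{A_k}$ is no longer a probability density on $(0,\pi)$ bounded by $n$ --- its total mass can be arbitrarily small, so the effective ``$n$'' in the lemma becomes $n/\int_{A_k}h$, which depends on $h$ and is uncontrolled. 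Moreover, on bands bounded away from the origin the lemma has no leverage, since its content is precisely the trade-off between near-zero mass and $\inf_{(1/(2n),\pi)}f$. You sense this obstruction in your final paragraph, but the band mechanism does not close it.

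The paper bypasses bands with a one-line algebraic inequality. Writing each term of $G_i(\lambda)=\min_k d_{i,k'}\lambda^{-\beta_{i,k'}-\delta}$ as $\frac{d_{i,k'}}{\overline c_{i',k}}\bigl(\overline c_{i',k}\lambda^{-\overline\alpha_{i',k}+\delta}\bigr)\lambda^{-\kappa_{i,k}}$ with $\kappa_{i,k}=\beta_{i,k'}-\overline\alpha_{i',k}+2\delta$ and using $\min_k(a_kb_k)\le(\min_ka_k)(\max_kb_k)\le(\min_ka_k)\sum_kb_k$ gives
\[
G_i(\lambda)\le F_i(\lambda)\sum_{k=1}^q\frac{d_{i,k'}}{\overline c_{i',k}}\lambda^{-\kappa_{i,k}}.
\]
This keeps all integrals over the full interval, so Lemma~\ref{lem:aux-monotone} applies directly with $f=F_i$ and $\kappa=\kappa_{i,k}$, and the sum over $k$ in $\delta_n$ drops out for free. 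The factor $R_{i,k}$ then arises from the case analysis in \eqref{eq:hsup-res}: the near-zero integral $n\int_0^{M^{-1/\kappa}}F_i\lambda^{-\kappa}$ is bounded using some $k_\ast$ with $\overline\alpha_{i'_\ast,k_\ast}+\kappa_{i,k}<1$, while $\inf_{(1/(2n),\pi)}F_i$ is governed by a possibly different minimizer $\underline k$; when $\overline\alpha_{\underline i',\underline k}+\kappa_{i,k}\ge1$ one is forced to take $k_\ast\neq\underline k$, and equating the two bounds produces the ratio $\overline c_{i'_\ast,k_\ast}/\overline c_{\underline i',\underline k}$, whose optimized form is exactly $R_{i,k}$. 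Your intuition for the origin of $R_{i,k}$ is right, but it materializes through this global application of the lemma to $F_i$, not through a band decomposition.
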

\begin{proof}
	In order to prove the first inequality in \eqref{eq:bound-DH-2}, observe that $g_l = \sum_{i=1}^m g_{l,i}$ with $g_{l,i}\in\Ga_1(d_{i,k},\beta_{i,k},L)$ for every $k$. Therefore, for any $\delta<\frac\eta3$, we have $\lvert g_{l,i}(\la)\rvert \leq L(\delta)d_{i,k'}\lvert\la\rvert^{-\beta_{i,k'}-\delta}$ for every $k$ (recall that $k'=k'(i,k)$), which implies $\lvert g_{l,i}(\la)\rvert \leq L(\delta)\min_k d_{i,k'}\lvert\la\rvert^{-\beta_{i,k'}-\delta}$. Proceeding similarly to \eqref{eqn:toeplitz-sup}, we obtain 
	\begin{align*}
		\| T_n(g_l)^{\frac{1}{2}} T_n(f_l)^{-\frac{1}{2}}  \|_{\op}^2 &\leq \sup_{h\in\calp_n} \frac{\int_{0}^{\pi} g_l(\lambda) h(\lambda)\, d\lambda}{\int_{0}^{\pi} f_l(\lambda) h(\lambda)\, d\lambda} \leq \sup_{h\in\calp_n} \sum_{i=1}^m \frac{\int_{0}^{\pi} g_{l,i}(\lambda) h(\lambda)\, d\lambda}{\int_{0}^{\pi} f_l(\lambda) h(\lambda)\, d\lambda} \\
		&\leq L(\delta) \sup_{h\in\calp_n} \sum_{i=1}^m \frac{\int_{0}^{\pi} \min_k d_{i,k'}\la^{-\beta_{i,k'}-\delta} h(\lambda)\, d\lambda}{\int_{0}^{\pi} f_l(\lambda) h(\lambda)\, d\lambda}.
	\end{align*}
	Next, we have	
	$1/f_l = \sum_{k=1}^q \ov f_{l,k}$ with some  $\ov f_{l,k}\in \Ga_1(1/ \ov c_{i,k},-\ov \al_{i,k},L)$ for all $i$. Therefore, $\lvert\ov f_{l,k}(\la)\rvert\leq L(\delta)\ov c^{-1}_{i',k}\lvert\la\rvert^{\ov \al_{i',k}-\delta}$ for all $i$ (recall that $i'=i'(i,k)$) and we obtain $1/f_l\leq q \max_k  \lvert \ov f_{l,k} \rvert$ and $f_l(\la) \geq q^{-1}\min_k \lvert \ov f_{l,k}(\la)\rvert^{-1}\geq (qL(\delta))^{-1}\min_k  \ov c_{i',k}\lvert\la\rvert^{-\ov \al_{i',k}+\delta}$ for every $i$. Thus,
	\begin{equation*}
		\| T_n(g_l)^{\frac{1}{2}} T_n(f_l)^{-\frac{1}{2}}  \|_{\op}^2 \leq qL(\delta)^2\sup_{h\in\calp_n} \sum_{i=1}^m     \frac{\int_0^\pi \min_k d_{i,k'}\la^{-\beta_{i,k'}-\delta} h(\la)\,d\la}{\int_0^\pi \min_k \ov c_{i',k}\la^{-\ov \al_{i',k}+\delta} h(\la)\,d\la}.
	\end{equation*}
	Let $G_{i}(\la)= \min_k d_{i,k'}\la^{-\beta_{i,k'}-\delta}$ and $F_{i}(\la)=\min_k \ov c_{i',k}\la^{-\ov \al_{i',k}+\delta}$.
	For all $i$, $k$ and $\la\in(0,\pi)$, we have
	\begin{align*}
		G_{i}(\lambda)&= \min_k \left[ d_{i,k'}\la^{-\beta_{i,k'}-\delta}\right]=\min_k \left[\frac{d_{i,k'}}{\ov c_{i',k}}(\ov c_{i',k}\la^{-\ov \al_{i',k}+\delta})\la^{-(\beta_{i,k'}-\ov \al_{i',k}+2\delta)}\right] \\
		&\leq \min_k \left[\ov c_{i',k}\la^{-\ov \al_{i',k}+\delta}\right]\;\max_k\left[ \frac{d_{i,k'}}{\ov c_{i',k}}\la^{-(\beta_{i,k'}-\ov \al_{i',k}+2\delta)}\right] \\
		&\leq F_{i}(\la)\sum_{k=1}^q \frac{d_{i,k'}}{\ov c_{i',k}}\la^{-(\beta_{i,k'}-\ov \al_{i',k}+2\delta)}.
	\end{align*}
	As a consequence,
	\begin{equation}\begin{split}
			\| T_n(g_l)^{\frac{1}{2}} T_n(f_l)^{-\frac{1}{2}}  \|_{\op}^2 &\leq qL(\delta)^2\sup_{h\in\calp_n} \sum_{i=1}^m     \frac{\int_0^\pi G_{i}(\la)h(\la)\,d\la}{\int_0^\pi F_{i}(\la) h(\la)\,d\la}\\
			&\leq qL(\delta)^2 \sum_{i=1}^m \sum_{k=1}^q \frac{d_{i,k'}}{\ov c_{i',k}}   \sup_{h\in\calp_n} \frac{\int_0^\pi F_{i}(\la)\la^{-(\beta_{i,k'}-\ov \al_{i',k}+2\delta)}h(\la)\,d\la}{\int_0^\pi F_{i}(\la) h(\la)\,d\la}.\end{split}\label{eq:sup}
	\end{equation}	
	To bound the right-hand side, we apply Lemma~\ref{lem:aux-monotone} to the function $F_{i}$ and $\kappa=\kappa_{i,k}=\beta_{i,k'}-\ov \al_{i',k}+2\delta$. As in the proof of Lemma~\ref{lem:toeplitz-norm-0}, only $\kappa_{i,k}>0$ is of concern, as the integral fraction is bounded otherwise. 
	Fix $i$ and $k$ with $\kappa_{i,k}>0$  and let $M\geq 2^{1+\kappa_{i,k}}n^{\kappa_{i,k}}$. Clearly, for any $k_\ast$ with $\ov \al_{i'_\ast,k_\ast}+\kappa_{i,k}<1$ (where $i'_\ast = i'(i,k_\ast)$), we have
	\begin{equation}\label{eq:min1} \begin{split}
			n\int_0^{M^{-\frac1{\kappa_{i,k}}}} F_{i}(\la)\la^{-\kappa_{i,k}}\,d\la&	\leq n\ov c_{i'_\ast,k_\ast} \int_0^{M^{-\frac1{\kappa_{i,k}}}} \la^{-\ov \al_{i'_\ast,k_\ast}+\delta-\kappa_{i,k}}\,d\la   \\ &= \frac{n\ov c_{i'_\ast,k_\ast}}{1-\ov \al_{i'_\ast,k_\ast}-\kappa_{i,k}+\delta} M^{1-\frac1{\kappa_{i,k}}(1-\ov \al_{i'_\ast,k_\ast}+\delta)}. \end{split}
	\end{equation}
Such a $k_\ast$ always exists by the first assumption in \eqref{eqn:fg-inverse-choice}.
	At the same time,
	\begin{align}\nonumber
			\frac M4\inf_{\la\in(\frac1{2n},\pi)} F_{i}(\la)&=\frac M4\min_{k}\min_{\la\in(\frac1{2n},\pi)} \ov c_{i',k}\la^{-\ov \al_{i',k}+\delta}\\
			& = \frac M4 \min_{k}  \ov c_{i',k}\left(\pi^{-(\ov \al_{i',k}-\delta)}\bone_{\{\ov \al_{i',k}-\delta\geq0\}}+(2n)^{\ov \al_{i',k}-\delta}\bone_{\{\ov \al_{i',k}-\delta<0\}}\right) \label{eq:min2}\\
			&\geq \frac M{4\pi^{\ga}}\min_{k}  \ov c_{i',k} (2n)^{-(\ov \al_{i',k}-\delta)_-},
	\nonumber\end{align}
	where $\ga=\max_{i,k}  \lvert\ov \al_{i,k}\rvert$.
	Hence, the supremum in \eqref{eq:hsup-res} (with $f=F_{i}$ and $\kappa=\kappa_{i,k}$) is bounded by the value   of $M$ for which the last terms in \eqref{eq:min1} and \eqref{eq:min2} are equal. Denote this value by $\ov M$. Note that $\ov M$ depends on $k_\ast$, which is a free parameter that we can choose, possibly dependent on $n$, $i$ and $k$, as long as   $\ov \al_{i'_\ast,k_\ast}+\kappa_{i,k}<1$ is satisfied.
	
	Let $\un k = \un k(n,i)$ be a minimizer of the last term in \eqref{eq:min2}. If $\un k$ is such that $\ov \al_{\un i',\un k}+\kappa_{i,k}\leq1$ (where $\un i'=i'(i,\un k)$), we choose $k_\ast = \un k$. Then, depending on whether $\ov \al_{\un i',\un k}-\delta\geq0$ or $\ov \al_{\un i',\un k}-\delta<0$, we obtain 
	\[ \ov M = \left(\frac{4\pi^{\ga}}{1-\ov \al_{\un i',\un k}-\kappa_{i,k}+\delta} n\right)^{\frac{\kappa_{i,k}}{1-\ov \al_{\un i',\un k}+\delta}}\] 
	or
	\[\ov M = \left(\frac{4\pi^{\ga}2^{-(\ov \al_{\un i',\un k}-\delta)}}{1-\ov \al_{\un i',\un k}-\kappa_{i,k}+\delta} \right)^{\frac{\kappa_{i,k}}{1-\ov \al_{\un i',\un k}+\delta}}n^{\kappa_{i,k}}.\]
	In any case, it follows that
	\begin{equation}\label{eq:res1}  
		\ov M \leq K n^{\frac{\kappa_{i,k}}{1-(\al'_{i})_{+}}}.
	\end{equation}
	Now suppose that $\ov \al_{\un i',\un k}+\kappa_{i,k}\geq1$ and recall that  $\delta<\frac\eta3$, where $\eta$ is the number from \eqref{eqn:fg-inverse-choice}. In this case, by the condition $\beta_{i,k'}-\ov \alpha_{i',k} < 1-\eta$ in \eqref{eqn:fg-inverse-choice}, we may conclude that $\kappa_{i,k}\leq 1-\eta+2\delta< 1-\frac\eta3$, which implies that $\ov \al_{\un i',\un k}\geq1-\kappa_{i,k}> \frac\eta3$ and therefore, $\ov \al_{\un i',\un k}-\delta>\frac\eta3-\delta>0$. Hence, for  any $k_\ast$ with $\ov \al_{i'_\ast,k_\ast}+\kappa_{i,k}<1$ (there is at least one such $k_\ast$ by the first condition in  \eqref{eqn:fg-inverse-choice}), the bound $\ov M$ is given by
	\[ \ov M = \left(\frac{4\pi^{\ga}}{1-\ov \al_{i'_\ast, k_\ast}-\kappa_{i,k}+\delta}\frac{\ov c_{i'_\ast,k_\ast}}{\ov c_{\un i',\un k}} n\right)^{\frac{\kappa_{i,k}}{1-\ov \al_{i'_\ast, k_\ast}+\delta}}. \]
	As $k_\ast$ was arbitrary, the supremum in \eqref{eq:hsup-res} is bounded by
	\begin{equation}\label{eq:res2} 
		K(R_{i,k}n)^{\frac{\kappa_{i,k}}{1-(\al'_{i})_+}}.
	\end{equation}
	Inserting \eqref{eq:res1} and \eqref{eq:res2} as bounds for the second supremum in \eqref{eq:sup} and using the third condition in \eqref{eqn:fg-inverse-choice} to bound $2\delta/(1-(\ov\al_{i})_+) \leq 2\delta/\eta\leq \eps$ for  $\delta\leq \eps\eta/2$, we obtain the first inequality in \eqref{eq:bound-DH-2}. The second inequality follows by applying the first one to $g_{l+1}$ instead of $g_l$. 
\end{proof}

\subsection{Proof of Theorem \ref{thm:toeplitz-inverse}}\label{sec:proofs-toeplitz}

We need two more lemmas. The first one is a  refinement of Theorem 2 of \cite{Lieberman2004}.
In contrast to the original statement of \cite{Lieberman2004}, we allow for different functions $f_1, f_2,\ldots$ and $g_1,g_2,\dots$, and we explicitly track the effect of the implicit constants. 
This allows us to derive some uniform error bounds if we consider sequences of spectral densities.

\begin{lemma}\label{thm:toeplitz-integral}
	For $l=1,\ldots, p$, let $g_l \in \Gamma_1(c_{g_l}, \alpha_{g_l},L)$ and $f_l \in \Gamma_1(c_{f_l}, \alpha_{f_l},L)$ with $\al_{g_l},\al_{f_l}<1$.
	If $\sum_{l=1}^p (\alpha_{f_l}+\alpha_{g_l})_+< 1-\eta$ for some $\eta\in(0,1)$, 
	then for any $\epsilon>0$ there exists a constant $K=K(\epsilon,p, \eta,\balpha)$ such that
	\begin{equation}\begin{split}
	&	\Biggl| \frac{1}{n} \tr \Biggl( \prod_{l=1}^p T_n(g_l) T_n(f_l) \Biggr) - (2\pi)^{2p-1} \int_{-\pi}^{\pi} \prod_{l=1}^p (g_l(\lambda) f_l(\lambda)) \, d\lambda \Biggr| \\
	&\quad \leq K \Biggl[\prod_{l=1}^p c_{f_l} c_{g_l}\Biggr]  n^{-1+\epsilon + \sum_{l=1}^p(\alpha_{f_l} + \alpha_{g_l})_+}.
		\label{eqn:toeplitz-integral}
		\end{split}
	\end{equation}
	The constant $K$ is bounded on compacts in $\balpha$ for  sufficiently small $\epsilon$.
\end{lemma}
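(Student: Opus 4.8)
The plan is to start from the exact Fourier representation of the trace and to reduce the whole expression to its value on the diagonal. Writing $(h_1,\dots,h_{2p})=(g_1,f_1,\dots,g_p,f_p)$ and inserting $(T_n(h))_{jk}=\int_{-\pi}^{\pi}e^{i(j-k)\lambda}h(\lambda)\,d\lambda$ into the cyclic expansion $\tr(\prod_l T_n(g_l)T_n(f_l))=\sum_{j_1,\dots,j_{2p}}\prod_m (T_n(h_m))_{j_m j_{m+1}}$, the index sums collapse into Dirichlet kernels $D_n(\phi)=\sum_{k=1}^n e^{ik\phi}$, yielding
\begin{equation*}
	\tr\Biggl(\prod_{l=1}^p T_n(g_l)T_n(f_l)\Biggr)=\int_{(-\pi,\pi)^{2p}}\prod_{m=1}^{2p}h_m(\theta_m)\prod_{m=1}^{2p}D_n(\theta_m-\theta_{m-1})\,d\theta,
\end{equation*}
with $\theta_0=\theta_{2p}$. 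A direct computation of the kernel (orthogonality of the $e^{ik\theta_m}$ collapses all summation indices to a common value) shows that freezing every argument at $\theta_1$ gives $\frac1n\int\prod_m h_m(\theta_1)\prod_m D_n(\theta_m-\theta_{m-1})\,d\theta=(2\pi)^{2p-1}\int_{-\pi}^\pi\prod_l g_l f_l$ \emph{exactly}, which is the asserted leading term; the full product $\prod_m h_m$ is integrable because its exponent is bounded by $\sum_l(\alpha_{f_l}+\alpha_{g_l})_+<1$, even though the individual partial products need not be.

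Consequently the error equals $\frac1n\int[\prod_m h_m(\theta_m)-\prod_m h_m(\theta_1)]\prod_m D_n\,d\theta$, an integral of a quantity that vanishes on the diagonal. I would expand the bracket as a telescoping sum in which each summand contains a single increment $h_k(\theta_k)-h_k(\theta_1)$, the remaining factors being evaluated at $\theta_1$ or at their own $\theta_j$; using a sum rather than a product is what is meant to keep the overall rate a single boundary effect, the number $2p$ of summands being absorbed into $K$. Writing $h_k(\theta_k)-h_k(\theta_1)=\int_{\theta_1}^{\theta_k}h_k'$ and invoking the $\Gamma_1$ regularity through $|\lambda\,h_k'(\lambda)|\le c_{h_k}|\lambda|^{-\alpha_{h_k}-\epsilon}L(\epsilon)$ keeps the integrand a power law of the same order as $h_k$, while the separation $|\theta_k-\theta_1|\le\sum_j|\theta_j-\theta_{j-1}|$ is tied to the decay of the adjacent Dirichlet kernels. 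Since $\prod_m D_n$ integrates to a quantity of order $n$ and concentrates consecutive arguments within scale $1/n$, the prefactor $1/n$ combined with a single increment of size $1/n$ is expected to supply the base rate $n^{-1+\epsilon}$.

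The residual task is then purely to bound multiple integrals of products of power-law functions against $\prod_m|D_n(\theta_m-\theta_{m-1})|$. These I would estimate by integrating out the variables successively, each time applying a one-dimensional bound of the type in Lemma~\ref{lem:aux-monotone}: integrating a singularity $|\theta|^{-a}$ against a kernel whose mass sits near the origin costs a factor $n^{a_+}$. Accumulated along the chain, these factors combine to $n^{\sum_l(\alpha_{f_l}+\alpha_{g_l})_+}$; the coefficients separate out as $\prod_l c_{f_l}c_{g_l}$, and together with the base rate this delivers the claimed bound. The hypothesis $\sum_l(\alpha_{f_l}+\alpha_{g_l})_+<1-\eta$ guarantees both that the limiting integral converges and that every intermediate integral stays finite with room $\eta$ to spare.

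The main obstacle is the joint control of the origin singularities together with uniform constant tracking. Because only the \emph{aggregate} positive part of the exponents is below $1$ --- individual $\alpha_{f_l},\alpha_{g_l}$ may be positive and the partial products genuinely non-integrable --- one cannot bound the functions crudely but must keep the singularities together and exploit the precise localization of the kernels, which is exactly where Lemma~\ref{lem:aux-monotone} is indispensable. For $p\ge 2$ the kernel $\prod_m D_n$ is no longer a single Fej\'er square, so isolating the genuinely off-diagonal contributions and showing that they still collapse to one $O(n^{-1+\epsilon})$ effect, rather than a compounding one, is the delicate combinatorial heart of the argument. Throughout, every estimate must be made to depend only on $\epsilon,p,\eta$ and $\balpha$, and to be bounded on compacts in $\balpha$, so that the lemma remains uniform when later applied to the $n$-dependent spectral densities in Theorem~\ref{thm:toeplitz-inverse}.
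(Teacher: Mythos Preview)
The paper does not prove this lemma from scratch: it simply notes that the $\Gamma_1$ hypotheses give $|h_l(\lambda)|+|\lambda h_l'(\lambda)|\le c_{h_l}L(\delta)|\lambda|^{-\alpha_{h_l}-\delta}$ and then invokes Theorem~1 in the supplement of \cite{takabatake_note_2022}, which already delivers the bound $O(n^{-1+\epsilon/2+2p\delta+\sum_l(\alpha_{f_l}+\alpha_{g_l})_+})$; the only remaining work is to observe that the implicit constant there depends on the $h_l$ only through $c_{h_l}L(\delta)$, and to set $\delta=\epsilon/(4p)$ so that the $L(\delta)^{2p}$ factor can be absorbed into $K$. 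Your Dirichlet-kernel representation, telescoping in the arguments, and successive one-variable integration is precisely the machinery that underlies the Lieberman--Takabatake result being cited, so in outline you are redoing what the paper outsources. That is a legitimate route, and the constant tracking you emphasize is exactly the point the paper is making.

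One correction, though: Lemma~\ref{lem:aux-monotone} is not the right tool here. That lemma bounds a \emph{ratio} $\sup_{h\in\mathcal P_n}\int f\lambda^{-\kappa}h/\int fh$ and is tailored to the operator-norm estimate in Lemma~\ref{lem:toeplitz-norm-0}; it says nothing about integrating a power singularity against a Dirichlet or Fej\'er kernel. What the successive-integration step actually needs is a convolution bound of the type
\[
\int_{-\pi}^{\pi} |\mu|^{-\alpha}\,|L_n(\lambda-\mu)|\,d\mu \;\le\; K\,n^{\alpha_+}|\lambda|^{-\alpha},
\]
with careful handling when several singularities coincide. These are the estimates of Fox--Taqqu, Dahlhaus, Lieberman and Takabatake, not Lemma~\ref{lem:aux-monotone}. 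If you want to carry out the proof directly rather than cite, you should import those kernel inequalities; otherwise the ``integrate out the variables successively'' step has no lemma in the present paper to stand on.
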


\begin{proof} 
	Let $\delta>0$, to be chosen later, as well as $C_{f_l}=c_{f_l}L(\delta)$ and $C_{g_l}=c_{g_l}L(\delta)$. By assumption,
	\begin{equation*}
		|g_l(\lambda)| + |\lambda  g'_l(\lambda)| \leq C_{g_l} |\lambda|^{-\alpha_{g_l}-\delta},\quad	|f_l(\lambda)| + |\lambda  f'_l(\lambda)| \leq C_{f_l} |\lambda|^{-\alpha_{f_l}-\delta}.
	\end{equation*} 
	Therefore, Theorem 1 in the supplement of \cite{takabatake_note_2022} yields 
	\begin{equation}\begin{split}
		& \left| \frac{1}{n} \tr \left( \prod_{l=1}^p T_n(g_l) T_n(f_l) \right) - (2\pi)^{2p-1} \int_{-\pi}^{\pi} \prod_{l=1}^p (g_l(\lambda) f_l(\lambda)) \, d\lambda \right| \\
		&\quad= {O}\Bigl( n^{-1+\epsilon/2+2p\delta+ \sum_{l=1}^p(\alpha_{f_l}+ \alpha_{g_l})_+} \Bigr)
		\label{eqn:toeplitz-bigO}\end{split}
	\end{equation}
	for any $\eps>0$.
	An inspection of the proof of this result 
	reveals that the implicit constant in \eqref{eqn:toeplitz-bigO} only depends on $C_{f_l}$ and $C_{g_l}$ but not on the other properties of $f_l$ and $g_l$. 
	Hence, choosing $\delta=\eps/(4p)$, we derive \eqref{eqn:toeplitz-integral} with $C_{f_l}$ and $C_{g_l}$ instead of $c_{f_l}$ and $c_{g_l}$. Subsuming $L(\eps/(4p))^{2p}$ into $K$, we obtain \eqref{eqn:toeplitz-integral}.
\end{proof}

			\begin{lemma}\label{lem:toeplitz-integral-sum}
				For $l=1,\ldots, p$, let $g_l$ and $f_l$ be as in Theorem \ref{thm:toeplitz-inverse}. 
				There exists $K=K( \epsilon,L, m, p,q,\eta, \balpha)$ that is uniformly bounded on compacts in $\balpha$ for sufficiently small $\eps$  such that
				\begin{equation} 
					\left| \frac{1}{n} \tr \left( \prod_{l=1}^p T_n(g_l) T_n(f_l^{-1}) \right) - (2\pi)^{2p-1} \int_{-\pi}^{\pi} \prod_{l=1}^p \frac{g_l(\lambda)}{f_l(\lambda)} \, d\lambda \right|  
					\leq \frac{K}{n} (\delta^\ast_{n})^p
					\label{eqn:toeplitz-integral-frac} 
				\end{equation}
			and
				\begin{equation}
					\Biggl| \frac{1}{n} \tr \Biggl( \prod_{l=1}^p T_n(g_l) T_n(f_l)^{-1} \Biggr) - \frac{1}{n} \tr \Biggl( \prod_{l=1}^p T_n(g_l) T_n((4\pi^2 f_l)^{-1} ) \Biggr) \Biggr| \leq \frac{K}{n}  \delta_n^\star    [ ( \delta_n^\star \delta_n )^p +  (\delta_n^\ast )^p ].
					\label{eq:help}
				\end{equation}
			\end{lemma}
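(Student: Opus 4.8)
The plan is to establish the two displayed estimates separately: \eqref{eqn:toeplitz-integral-frac} follows almost directly from Lemma~\ref{thm:toeplitz-integral}, whereas \eqref{eq:help} requires a telescoping argument that replaces each matrix inverse $T_n(f_l)^{-1}$ by the \emph{symbol inverse} $T_n((4\pi^2 f_l)^{-1})=\frac{1}{4\pi^2}T_n(1/f_l)$.

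\emph{Proof of \eqref{eqn:toeplitz-integral-frac}.} The crucial observation is that $T_n(f_l^{-1})$ is the Toeplitz matrix of the reciprocal \emph{function} $1/f_l$, not a matrix inverse, so the left-hand side is a bona fide product of Toeplitz matrices to which Lemma~\ref{thm:toeplitz-integral} applies. Writing $g_l=\sum_{i=1}^m g_{l,i}$ and $1/f_l=\sum_{k=1}^q\ov f_{l,k}$ and using the linearity of $f\mapsto T_n(f)$, I expand $\prod_l T_n(g_l)T_n(f_l^{-1})$ into a sum over tuples $(i_1,k_1,\dots,i_p,k_p)$ of products of single-$\Ga_1$ Toeplitz matrices. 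For each summand I invoke Lemma~\ref{thm:toeplitz-integral}, selecting for $g_{l,i_l}$ the exponent $\beta_{i_l,k^\ast}$ and for $\ov f_{l,k_l}$ the exponent $-\ov\al_{i^\ast,k_l}$ permitted by membership in the respective intersections; this makes each per-factor exponent $(\beta_{i_l,k^\ast}-\ov\al_{i^\ast,k_l})_+$ agree with the one defining $\delta^\ast_n$. The integrability hypothesis $\sum_l(\alpha_{f_l}+\alpha_{g_l})_+<1-\eta$ of Lemma~\ref{thm:toeplitz-integral} is guaranteed by the condition $p(\beta_{i,k^\ast}-\ov\al_{i^\ast,k})_+<1-\eta$ of \eqref{eqn:fg-inverse-choice}. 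Because the symbols do not depend on $l$, the resulting bound factorizes, and summing over all tuples collapses to $\frac Kn\big(\sum_{i,k}\frac{d_{i,k^\ast}}{\ov c_{i^\ast,k}}n^{(\beta_{i,k^\ast}-\ov\al_{i^\ast,k})_++\eps}\big)^p=\frac Kn(\delta^\ast_n)^p$.

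\emph{Proof of \eqref{eq:help}.} Put $A_l=T_n(g_l)$, $P_l=T_n(f_l)^{-1}$, $Q_l=T_n((4\pi^2 f_l)^{-1})$, and define the \emph{defect} $E_l=I_n-T_n(f_l)Q_l=I_n-\frac{1}{4\pi^2}T_n(f_l)T_n(1/f_l)$, so that $P_l-Q_l=P_lE_l$. The telescoping identity
\[ \prod_{l=1}^pA_lP_l-\prod_{l=1}^pA_lQ_l=\sum_{j=1}^p\Big(\prod_{l<j}A_lP_l\Big)A_jP_jE_j\Big(\prod_{l>j}A_lQ_l\Big) \]
reduces matters to bounding $p$ traces, each carrying a single defect $E_j$. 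Since all $f_l$ coincide, I symmetrize by conjugating with $W=T_n(f)^{1/2}$: each true-inverse block satisfies $A_lP_l=WSW^{-1}$ and is similar to $S=T_n(f)^{-1/2}T_n(g)T_n(f)^{-1/2}$, whose operator norm equals $\|T_n(g)^{1/2}T_n(f)^{-1/2}\|^2_{\op}\le K\delta_n$ by Lemma~\ref{lem:toeplitz-norm}. The unbalanced $T_n(f)^{\pm1/2}$ factors appearing at the seams between matrix- and symbol-inverse blocks, together with the factor absorbed in the final trace bound, contribute the self-normalization prefactor $\delta^\star_n$, which is the borderline operator-norm bound arising when $f$ plays the role of $g$ and is, by construction, always at least $1$.

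Each telescoped trace is then estimated in two complementary ways, which is the source of the two terms in \eqref{eq:help}. A purely operator-norm estimate --- bounding the defect $E_j$ and every ratio block in $\|\cdot\|_{\op}$, using $|\tr(\cdot)|\le n\|\cdot\|_{\op}$ together with $\delta_n\le\delta^\star_n\delta_n$ --- yields a contribution of order $\frac1n\delta^\star_n(\delta^\star_n\delta_n)^p$. Alternatively, collecting the Toeplitz symbols around $E_j$ and evaluating the relevant trace through Lemma~\ref{thm:toeplitz-integral} (equivalently, through \eqref{eqn:toeplitz-integral-frac} applied with $4\pi^2 f_l$ in place of $f_l$) produces the sharper trace-level contribution $\frac1n\delta^\star_n(\delta^\ast_n)^p$. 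Combining the two and summing over $j$ gives the claimed bound $\frac Kn\delta^\star_n[(\delta^\star_n\delta_n)^p+(\delta^\ast_n)^p]$. I expect the genuine difficulty to lie exactly here: the defect $E_j$ is \emph{not} small in operator norm, so its trace against the surrounding mixture of matrix and symbol inverses must be evaluated rather than crudely bounded, and one must verify that the symbol-versus-matrix mismatch at each seam is charged to a single factor of $\delta^\star_n$ instead of accumulating $p$ of them.
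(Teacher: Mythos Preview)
Your treatment of \eqref{eqn:toeplitz-integral-frac} is correct and matches the paper exactly: expand by multilinearity and apply Lemma~\ref{thm:toeplitz-integral} termwise with the $(i^\ast,k^\ast)$ exponents.

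For \eqref{eq:help}, however, there is a real gap. Your ``purely operator-norm estimate'' using $|\tr(\cdot)|\le n\|\cdot\|_{\op}$ cannot produce a bound of order $\delta_n^\star(\delta_n^\star\delta_n)^p$ on the trace: none of the factors $A_lP_l$, $A_lQ_l$, or $E_j$ carries a $1/n$ in operator norm, so after multiplying by $n$ you are a full power of $n$ too large. You recognize this yourself at the end (``$E_j$ is not small in operator norm''), but the fix you hint at---``evaluate the trace rather than crudely bound it''---is not how the $(\delta_n^\star\delta_n)^p$ term actually arises.

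The missing ingredient is a \emph{Frobenius} bound on the symmetrized defect. Setting $\Delta_l=I_n-T_n(f_l)^{1/2}T_n((4\pi^2f_l)^{-1})T_n(f_l)^{1/2}$, one has $\|\Delta_l\|_F^2=\tr(\Delta_l^2)=n-2\tr(T_n(f_l)T_n((4\pi^2f_l)^{-1}))+\tr([T_n(f_l)T_n((4\pi^2f_l)^{-1})]^2)$, and applying \eqref{eqn:toeplitz-integral-frac} with $g_l=f_l$ (so $\delta_n^\ast$ becomes $\delta_n^\star$) shows each trace equals $n+O((\delta_n^\star)^2)$, hence $\|\Delta_l\|_F\le K\delta_n^\star$ with \emph{no} factor of $n$. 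This is what replaces your failed operator-norm step. The paper then works with the symmetrized blocks $A_l=T_n(g_l)^{1/2}T_n(f_l)^{-1}T_n(g_{l+1})^{1/2}$ and $B_l$ (defined analogously with the symbol inverse), so that $A_l-B_l$ factors through $\Delta_l$; one gets $\|A_l-B_l\|_F\le K\delta_n^\star\delta_n$ from $\|\Delta_l\|_F$ and Lemma~\ref{lem:toeplitz-norm}, and then an induction yields $\|\prod_lA_l-\prod_lB_l\|_F\le K(\delta_n^\star\delta_n)^p$. For the trace itself one telescopes once more and splits each summand: the ``Frobenius squared'' piece gives $(\delta_n^\star\delta_n)^p$, while the remaining piece $\tr[(\prod B_l^\ast)\Delta_r^\ast(\prod B_l^\ast)]$ is handled by \eqref{eqn:toeplitz-integral-frac} and contributes $(\delta_n^\ast)^p\delta_n^\star$. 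Your second ``alternative'' is close to this last step, but only after the Frobenius machinery has isolated it; it does not by itself handle the mixed matrix/symbol-inverse blocks in your telescoping sum.
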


			\begin{proof}[Proof of Lemma~\ref{lem:toeplitz-integral-sum}]
				Write $g_l = \sum_{i=1}^m g_{l,i}$ with $g_{l,i}\in  \Gamma_1(d_{i,k^\ast}, \beta_{i,k^\ast},L)$ and
				$1/f_l = \sum_{k=1}^q \ov{f}_{l,k} $ with $\ov{f}_{l,k}\in \Gamma_1(1/\ov c_{i^\ast,k}, -\ov \alpha_{i^\ast,k},L) $.
				By multilinearity and Lemma \ref{thm:toeplitz-integral} (and writing $i^\ast_l = i^\ast(i,k_l)$ and $k^\ast_l=k^\ast(i_l,k)$),
				\begin{align*}
					&  \left| \frac{1}{n} \tr \left( \prod_{l=1}^p T_n(g_l) T_n(f_l^{-1}) \right) - (2\pi)^{2p-1} \int_{-\pi}^{\pi} \prod_{l=1}^p \frac{g_l(\lambda)}{f_l(\lambda)} \, d\lambda \right| \\
					&\quad=\Bigg| \sum_{i_1,\dots,i_p=1}^m \sum_{k_1,\dots, k_p=1}^q \Bigg(\frac{1}{n} \tr \left( \prod_{l=1}^p T_n(g_{l,i_l}) T_n(\ov{f}_{l,k_l}) \right)  - (2\pi)^{2p-1} \int_{-\pi}^{\pi} \prod_{l=1}^p  g_{l,i_l}(\lambda)\ov f_{l,k_l}(\lambda) \, d\lambda\Bigg) \Bigg|\\
					&\quad\leq K n^{-1} \sum_{i_1,\dots,i_p=1}^m \sum_{k_1,\dots, k_p=1}^q \left[\prod_{l=1}^p  \frac{d_{i_l,k^\ast_l}}{\ov c_{i^\ast_l,k_l}} n^{ (\beta_{i_l, k^\ast_l} - \ov \alpha_{i^\ast_l,k_l} )_+ +\epsilon}\right]\\
					&\quad= K n^{-1} \prod_{l=1}^p\left[ \sum_{i=1}^m \sum_{k=1}^q  \frac{d_{i,k^\ast}}{\ov c_{i^\ast,k}} n^{ (\beta_{i, k^\ast} - \ov \alpha_{i^\ast,k} )_+ +\epsilon}\right]= Kn^{-1}  ({\delta}_{n}^\ast)^p,
				\end{align*}
				which is \eqref{eqn:toeplitz-integral-frac}.
				
				To prove \eqref{eq:help},  we may assume without loss of generality that $g_l\geq0$ for all $l$ by decomposing $g_l$ into a positive and a negative part and utilizing multilinearity on the left-hand side. The matrix $T_n(f_l)$ is regular because the upper bound on $1/f_l$ implies $f_l>0$.
				Thus, for $l=1,\ldots, p$, we can define
				\begin{align*}
					A_l &= T_n(g_l)^{1/2}T_n(f_l)^{-1}T_n(g_{l+1})^{1/2}, \\
					B_l &= T_n(g_l)^{1/2}T_n((4\pi^2 f_l)^{-1})T_n(g_{l+1})^{1/2}\quad \text{(with $g_{p+1}=g_1$)},\\
					\Delta_l &= I_n - T_n(f_l)^{1/2} T_n((4\pi^2 f_l)^{-1})T_n(f_l)^{1/2},
				\end{align*}
			which implies
		\begin{equation}\label{eq:diff} 
			A_l-B_l = \left(T_n(g_l)^{1/2} T_n(f_l)^{-1/2}\right)\Delta_l \left(T_n(f_l)^{-1/2} T_n(g_{l+1})^{1/2}\right).
		\end{equation}
				As $\tr(AB)=\tr(BA)$, the left-hand side of \eqref{eq:help} is equal to $\frac1n\left\lvert\tr(\prod_{l=1}^p A_l - \prod_{l=1}^p B_l)\right\rvert$.  
				We can apply \eqref{eqn:toeplitz-integral-frac} with $g_l=f_l$  to find for any $\epsilon>0$,
				\begin{align}\nonumber
					\|\Delta_l\|_F^2 &= \tr(\Delta_l \Delta_l)
					= n-2\,\tr\left(T_n(f_l)T_n((4\pi^2f_l)^{-1})\right)+\tr\left([T_n(f_l)T_n((4\pi^2f_l)^{-1})]^2\right) 
					\\
					&\leq K (\delta^\star_{n})^2.\label{eq:Delta}
				\end{align}				
				Moreover, Lemma~\ref{lem:toeplitz-norm} and the sub-multiplicativity of the operator norm yield $	\|A_l\|_{\op} \leq K \delta_n$.
				We will further show by induction that 
				\begin{equation}\label{eq:ind} 
					\left\lVert\prod_{l=1}^k A_l - \prod_{l=1}^k B_l\right\rVert_F \leq K( \delta^\star_{n} \delta_{n})^p, \quad k=1,\dots,p.
				\end{equation}
			Because $\lVert A\rVert_{\op}\leq \lVert A\rVert_F$, 
			this implies
				\begin{equation}
					\|A_l\|_\op + \|B_l\|_\op \leq 2 \|A_l\|_\op + \|B_l-A_l\|_{\op} \leq K \delta_n^\star \delta_n. \label{eqn:AlBlop}
				\end{equation}
								
				To establish \eqref{eq:ind} for $k=1$, note that by \eqref{eq:diff} and \eqref{eq:Delta} and the inequality $\lVert AB\rVert_F \leq \lVert A\rVert_\op\lVert B\rVert_F$, we have
				\begin{equation*}
					\lVert A_l-B_l\rVert_F \leq\left \lVert T_n(g_l)^{1/2}	T_n(f_l)^{-1/2}\right\rVert_{\op} \lVert \Delta_l\rVert_F \left \lVert T_n(f_l)^{-1/2}	T_n(g_{l+1})^{1/2}\right\rVert_{\op}\leq K\delta^\star_{n}\delta_{n},
				\end{equation*}
			From now on, by induction, we may assume that \eqref{eq:ind} holds for all $k=1,\dots,p-1$. Therefore, together with the bound $\lVert A_l\rVert_{\op} \leq K \delta_{n}$, we obtain 
			\begin{align*} 
						& \left\| \prod_{l=1}^p A_l - \prod_{l=1}^p B_l \right\|_F
						= \left\lVert \sum_{r=1}^p  \left(\prod_{l=1}^{r-1} B_l \right) (A_r-B_r) \left(\prod_{l=r+1}^{p} A_l \right)  \right\rVert_F \\
						&\quad= \left\Vert \sum_{r=1}^p  \left(\prod_{l=1}^{r-1} B_l -\prod_{l=1}^{r-1} A_l\right) (A_r-B_r) \left(\prod_{l=r+1}^{p} A_l \right)+\left(\prod_{l=1}^{r-1} A_l\right)(A_r-B_r) \left(\prod_{l=r+1}^{p} A_l \right)  \right\rVert_F \\
						&\quad\leq\sum_{r=1}^p  \lVert A_r-B_r\rVert_F\prod_{l\neq r}  \lVert A_l\rVert_{\op} \,+\, \sum_{r=1}^p \left\lVert  \prod_{l=1}^{r-1} B_l-\prod_{l=1}^{r-1} A_l \right\rVert_F \lVert A_r-B_r\rVert_F \prod_{l=r+1}^{p}  \lVert A_l\rVert_{\op} \\
					&\quad \leq K\Bigg( \sum_{r=1}^p \delta^\star_{n}\delta_{n}\prod_{l\neq r} \delta_{n}+ \sum_{r=1}^p (  \delta^\star_{n}{\delta_{n}}  )^{r-1}\delta^\star_{n} {\delta_{n}}\prod_{l=r+1}^p {\delta_{n}}\Bigg),
			 \end{align*}
				which implies \eqref{eq:ind} for $k=p$. 
				
				Returning to the trace, if $p\geq2$, we use the bound $\lvert \tr(AB)\rvert \leq \lVert A\rVert_F\lVert B\rVert_F$ and \eqref{eqn:AlBlop} to obtain
				\begin{align*}
					& \left|\tr\left( \prod_{l=1}^p A_l - \prod_{l=1}^p B_l \right)\right|= \left| \tr \left[\sum_{r=1}^p\left(\prod_{l=1}^{r-1} A_l\right)(A_r-B_r) \left(\prod_{l=r+1}^{p} B_l \right)  \right] \right| \\
					&= \left| \tr\left[ \sum_{r=1}^p  \left(\prod_{l=1}^{r-1} A_l -\prod_{l=1}^{r-1} B_l\right) (A_r-B_r) \left(\prod_{l=r+1}^{p} B_l \right)+\left(\prod_{l=1}^{r-1} B_l\right)(A_r-B_r) \left(\prod_{l=r+1}^{p} B_l \right) \right]  \right| \\
					&\leq \sum_{r=1}^p \left\|\prod_{l=1}^{r-1} B_l -\prod_{l=1}^{r-1} A_l\right\|_F \|A_r-B_r\|_F \prod_{l=r+1}^p \|B_l\|_{\op} + \sum_{r=1}^p \left| \tr\left[ \left(\prod_{l=1}^{r-1} B_l\right)(A_r-B_r) \left(\prod_{l=r+1}^{p} B_l \right) \right] \right| \\
					&\leq K \sum_{r=1}^p (\delta_n^\star \delta_n)^{r-1}\delta_n^\star\delta_n (\delta_n^\star\delta_n)^{p-r}   +  \sum_{r=1}^p \left| \tr\left[ \left(\prod_{l=1}^{r-1} B_l\right)(A_r-B_r) \left(\prod_{l=r+1}^{p} B_l \right) \right] \right| \\
					&\leq K  (\delta_n^\star \delta_n)^{p}   +  \sum_{r=1}^p\left| \tr\left[ \left(\prod_{l=1}^{r-1} B_l\right)(A_r-B_r) \left(\prod_{l=r+1}^{p} B_l \right) \right] \right|.
				\end{align*}
			Clearly, the bound in the last line trivially covers the case $p=1$ as well.
				To proceed, define 
				\begin{equation*}
					A_l^* =T_n(g_l) T_n(f_l)^{-1},\quad
					B_l^* =T_n(g_l) T_n((4\pi^2f_l)^{-1}),\quad
					\Delta_l^*= I_n - T_n(f_l) T_n((4\pi^2 f_l)^{-1})
				\end{equation*}
			which implies
			\[A_l^* \Delta_l^* = A_l^*-B_l^*.\]
				Since $\tr(AB)=\tr(BA)$, we can move  one factor of $T_n(g_1)^{1/2}$ from the very left to the very right and obtain
				\begin{align*}
					&\left| \tr\left[ \left(\prod_{l=1}^{r-1} B_l\right)(A_r-B_r) \left(\prod_{l=r+1}^{p} B_l \right) \right] \right| = \left| \tr\left[ \left(\prod_{l=1}^{r-1} B^*_l\right)(A^*_r-B^*_r) \left(\prod_{l=r+1}^{p} B^*_l \right) \right] \right| \\
					&\quad= \left| \tr\left[ \left(\prod_{l=1}^{r-1} B^*_l\right)A^*_r\Delta_r^* \left(\prod_{l=r+1}^{p} B^*_l \right) \right] \right| \\
					&\quad\leq \left| \tr\left[ \left(\prod_{l=1}^{r} B^*_l\right)\Delta_r^* \left(\prod_{l=r+1}^{p} B^*_l \right) \right] \right| 
					+ \left| \tr\left[ \left(\prod_{l=1}^{r-1} B^*_l\right)(A^*_r-B_r^*)\Delta_r^* \left(\prod_{l=r+1}^{p} B^*_l \right) \right] \right|.
				\end{align*}
				The first term is of the order $(\delta_n^*)^p\delta_n^\star$ by virtue of \eqref{eqn:toeplitz-integral-frac}.
				The second term is controlled by
				\begin{align*}
					& \left| \tr\left[ \left(\prod_{l=1}^{r-1} B^*_l\right)(A^*_r-B_r^*)\Delta_r^* \left(\prod_{l=r+1}^{p} B^*_l \right) \right] \right| \\
					&\quad = \left| \tr\left[ \left(\prod_{l=1}^{r-1} B_l\right)\left( T_n(g_r)^{1/2} T_n(f_r)^{-1/2} \right) \Delta_r^2 \left(  T_n(f_r)^{-1/2} T_n(g_{r+1})^{1/2} \right) \left(\prod_{l=r+1}^{p} B_l \right) \right] \right| \\
					&\quad\leq \left(\prod_{l\neq r} \|B_l\|_{\op} \right) \left\| T_n(g_r)^{1/2} T_n(f_r)^{-1/2} \right\|_\op \left\|  T_n(f_r)^{-1/2} T_n(g_{r+1})^{1/2} \right\|_\op  \|\Delta_r\|_F^2 \\
					&\quad\leq (\delta_n^\star \delta_n)^{p-1} \delta_n (\delta_n^\star)^2 
					 =  (\delta_n^\star \delta_n)^{p} \delta_n^\star.
				\end{align*}
				In summary, the term \eqref{eq:help} is of the order 
				\begin{equation*}
					n^{-1}  [ (\delta_n^*)^p \delta_n^\star + (\delta_n^\star \delta_n)^p \delta_n^\star  ] 
				 \leq n^{-1} \delta_n^\star \left( \delta_n^\star \delta_n + \delta_n^\ast\right)^p.\qedhere
				\end{equation*} 
			\end{proof}

			\begin{proof}[Proof of Theorem~\ref{thm:toeplitz-inverse}]
				Combine \eqref{eqn:toeplitz-integral-frac} and \eqref{eq:help} of Lemma~\ref{lem:toeplitz-integral-sum}.
			\end{proof}

\subsection{Proof  of Theorem \ref{thm:CLT}}\label{sec:proofs-quadratic}
\begin{proof}[Proof of Theorem \ref{thm:CLT}]
	Throughout the proof, we denote by $K=K(\epsilon, L, m, p,q,\eta,\balpha)$ a constant which may vary from line to line.
	We introduce 
	\begin{align*}
		A_n(\theta) = T_n(f^{\theta_0}_n)^{\frac{1}{2}} T_n(f^\theta_n)^{-1} \prod_{l=1}^p \left[T_n(g^{\theta}_{n,l}) T_n(f^\theta_n)^{-1} \right] T_n(f^{\theta_0}_n)^{\frac{1}{2}},
	\end{align*}
which is symmetric because of \eqref{eq:symm},
	and note that $X_n = T_n(f^{\theta_0}_n)^\frac{1}{2} Y_n$ where $Y_n \sim \mathcal{N}(0, I_n)$. 
	
	
	We have the representation
	\begin{equation}
			Z_n(\theta_0) 
			=  Y_n^\T A_n(\theta_0) Y_n  - \tr(A_n(\theta_0))\\
			= \sum_{k=1}^n (\epsilon_k^2-1) a_k,
		\label{eqn:Zn-decomp}
	\end{equation}
	where the $\epsilon_k$'s are independent standard normal random  variables and the $a_k$'s are the eigenvalues of $A_n(\theta_0)$. Note that both $\epsilon_k$ and $a_k$ depend on $n$, even though this is not reflected in the notation.
	The random variables $\epsilon_k^2-1$ are iid with zero mean and variance $2$, such that $\Var(Z_n(\theta_0)) = 2\sum_{k=1}^n a_k^2=2\|A_n(\theta_0)\|_F^2$.
	Moreover, the random variables $\epsilon_k^2-1$ are sub-exponential, and hence, by virtue of the Bernstein inequality \citep[Theorem~2.8.2]{Vershynin2003}, there exists some universal $K$ such that $\|Z_n(\theta_0)\|_{\psi_1} \leq K (\sum_{k=1}^n a_k^2)^{1/2} = K \|A_n(\theta_0)\|_F$.
With the notation $\phi_n=\phi_n(\theta_0)$, 
	the central limit theorem then yields $\phi_n^{-1} Z_n(\theta_0)\stackrel{d}{\longrightarrow}\mathcal{N}(0,1)$ if we can show that $\phi_n^{-1} \max_k a_k \to 0$ and $2\phi_n^{-2}\sum_{k=1}^n a_k^2 \to 1$.
	
	Note that $\max_k a_k = \|A_n(\theta_0)\|_{\op}$.
	Using the sub-multiplicativity of the operator norm, we have
	\begin{align*}
		\|A_n(\theta_0)\|_{\op} \leq  \prod_{l=1}^p \|T_n(g^{\theta_0}_{n,l})^\frac{1}{2} T_n(f_n^{\theta_0})^{-\frac{1}{2}}\|_{\op}^2.
	\end{align*}
Assuming $g^{\theta_0}_{n,l}\geq0$  for now, we can use Lemma~\ref{lem:toeplitz-norm} to obtain 
	$\|T_n(g^{\theta}_{n,l})^\frac{1}{2} T_n(f_n^{\theta})^{-\frac{1}{2}}\|_{\op}^2 
	\leq K  \delta_{n}(\theta)$.
	Hence, for any $\epsilon>0$ sufficiently small,
	\begin{align}
		\max_{k=1,\ldots, n} a_k = \|A_n(\theta_0)\|_{\op}
		\leq K \delta_{n}(\theta_0)^p, \label{eqn:An-upper}
	\end{align}	
	which implies $\phi_n^{-1} \max_{k} a_k\to 0$ in the setting of (ii). By splitting $g^{\theta_0}_{n,l}$ into positive and negative parts, one can see that this remains true for general $g^{\theta_0}_{n,l}$. 
	Now note that $\sum_{k=1}^n a_k^2 = \|A_n(\theta_0)\|^2_{F} = \tr(A_n(\theta_0)^2)$. 
	By Theorem \ref{thm:toeplitz-inverse} (and Remark~\ref{rem}), the last term satisfies
	\begin{equation*} 
		\left|\tr(A_n(\theta_0)^2) - \frac{n}{2\pi}\int_{-\pi}^{\pi} \prod_{l=1}^p \left(\frac{g^{\theta_0}_{n,l}(\lambda)}{f_n^{\theta_0}(\lambda)}\right)^2 \,d\lambda \right|  \leq  K \delta_n^\star(\theta_0)  \Bigl(  \delta^\ast_{n}(\theta_0)^{2p}+\delta^\star_{n}(\theta_0)^{2p} \delta_{n}(\theta_0)^{2p} \Bigr). 
	\end{equation*} 
	The assumption $\phi_n^{-1} \delta_n^\star(\theta_0) \big(  \delta^\ast_{n}(\theta_0)^p +\delta^\star_{n}(\theta_0)^p \delta_{n}(\theta_0) ^p\big)\to0$ implies   that we have $2\phi_n^{-2} \sum_{k=1}^n a_k^2=2\phi_n^{-2} \tr(A_n(\theta_0)^2) \to 1$, and we may conclude that $\phi_n^{-1} Z_n(\theta_0)\stackrel{d}{\longrightarrow}\mathcal{N}(0,1)$, establishing {claim (ii)}.
	Moreover, the previous display yields, for any $\epsilon>0$,
	\begin{align*}
		\|Z_n(\theta_0)\|_{\psi_1} \leq  K \|A_n(\theta_0)\|_F 
		\leq    K \phi_n + K \delta_n^\star(\theta_0)\Bigl(\delta^\ast_{n}(\theta_0)^p +\delta^\star_{n}(\theta_0)^{p} \delta_{n}(\theta_0)^p\Bigr),  
	\end{align*}
	which establishes {claim (i)}.
	
	Regarding {claim (iii)}, for arbitrary $\theta\in \Theta_0$, we have
	\begin{equation*}
		Z_n(\theta) 
		=   Y_n^\T A_n(\theta) Y_n  - \tr\left(T_n(f_n^{\theta_0})^{-\frac12}T_n(f_n^{\theta})T_n(f_n^{\theta_0})^{-\frac12}  A_n(\theta)\right) = \tilde{Z}_n(\theta) + \tilde{\Delta}_n(\theta),
	\end{equation*}
where
\begin{align*}
		\tilde{Z}_n(\theta)&=   Y_n^\T A_n(\theta) Y_n  - \tr(A_n(\theta)),\\
		   \tilde{\Delta}_n(\theta)&= \tr\left((I_n-T_n(f_n^{\theta_0})^{-\frac12}T_n(f_n^{\theta})T_n(f_n^{\theta_0})^{-\frac12} ) A_n(\theta)\right). 
	\end{align*}
Note that $Z_n(\theta_0)=\tilde{Z}_n(\theta_0)$.
	To study the non-random term $\tilde{\Delta}_n(\theta)$, we note that
	\begin{align*}
		\tilde{\Delta}_n(\theta)&=\tr\left(T_n(f^{\theta_0}_n)  T_n(f^\theta_n)^{-1} \prod_{l=1}^p \left[T_n(g^{\theta}_{n,l}) T_n(f^\theta_n)^{-1} \right] \right) - \tr\left(  \prod_{l=1}^p \left[T_n(g^{\theta}_{n,l}) T_n(f^\theta_n)^{-1} \right]  \right) \\
		&=\int_0^1 \tr\left(T_n((\nabla f^{\theta +u(\theta_0-\theta)}_n)^\T(\theta_0-\theta)) T_n(f^\theta_n)^{-1} \prod_{l=1}^p \left[T_n(g^{\theta}_{n,l}) T_n(f^\theta_n)^{-1} \right] \right)du.
	\end{align*}
	Apply Theorem \ref{thm:toeplitz-inverse} and property \eqref{eqn:ass-CLT-c} to the integrand to obtain 
	\begin{align*}
		 |\tilde{\Delta}_n(\theta) - J_n^*(\theta) | &=\Biggl|\tilde{\Delta}_n(\theta) 
		-\frac{n}{2\pi}\int_{-\pi}^{\pi} \biggl[ \frac{f_n^{\theta_0}(\lambda)}{f_n^\theta(\lambda)} - 1 \biggr] \prod_{l=1}^p \frac{g_{n,l}^{\theta}(\lambda)}{f_n^\theta(\lambda)}\, d\lambda  \Biggr| \\
		& \leq K\lVert \theta-\theta_0\rVert  \delta^\star_n(\theta) \biggl(\delta^\ast_{n}(\theta)^p  +  \left(\delta_n^\star(\theta) \delta_n(\theta)\right)^{p}
		  \biggr) \sup_{\theta'\in\Theta_0} \delta^\star_n(\theta,\theta') n^\eta\\
		&\leq Kn^{3\eta}  \lVert \theta-\theta_0\rVert\delta^\star_n(\theta)^2\Bigl( \delta^\ast_{n}(\theta)^p +\delta^\star_n(\theta)^{p} \delta_{n}(\theta)^p\Bigr),
	\end{align*}
 where $\sup_{\theta'} \delta^\star_n(\theta,\theta') n^\eta$ accounts for the factor $T_n(\nabla f_n^{\theta+u(\theta_0-\theta)}) T_n(f_n^\theta)^{-1}$ and $\sup_{\theta'} \delta^\star_n(\theta,\theta') \leq K n^{2\eta}\delta^\star_n(\theta)$ due to \eqref{eqn:ass-CLT-c}.  
	
	We now study the stochastic component $\tilde{Z}_n(\theta)$.
	For $\theta_1,\theta_2\in \Theta_0$, we write
	\begin{equation*}
		\tilde{Z}_n(\theta_1) - \tilde{Z}_n(\theta_2) =   Y_n^\T (A_n(\theta_1)-A_n(\theta_2)) Y_n  - \tr(A_n(\theta_1)-A_n(\theta_2)) = \sum_{k=1}^n (\tilde{\epsilon}_k^2-1) \tilde{a}_k, 
	\end{equation*}
	where $\tilde{\epsilon}_k \sim \mathcal{N}(0,1)$ are independent and  $\tilde{a}_k$ are the eigenvalues of $A_n(\theta_1)-A_n(\theta_2)$.
	As we have seen above, Bernstein's inequality shows   that $\|\tilde{Z}_n(\theta_1)-\tilde{Z}_n(\theta_2)\|_{\psi_1}\leq K \|A_n(\theta_1)-A_n(\theta_2)\|_F$ for some $K>0$.
	We have
	\begin{align*}
		\left\| A_n(\theta_1) - A_n(\theta_2) \right\|^2_F 
		&= \sum_{i,j=1}^n \left| (\theta_1 - \theta_2)^\T \int_0^1 \nabla A_n(t \theta_1 + (1-t)\theta_2)_{ij}\, dt \right|^2 \\
		&\leq \|\theta_1-\theta_2\|_2^2 \sum_{r=1}^M  \int_0^1 \sum_{i,j=1}^n  |\partial_{r} A_n(t \theta_1 + (1-t)\theta_2)_{ij}|^2 \, dt \\
		&\leq \|\theta_1-\theta_2\|_2^2 \sum_{r=1}^M \sup_{\theta\in\Theta_0} \|\partial_{r} A_n(\theta)\|_F^2.
	\end{align*}
	Now use the identity $\partial_{r} T_n(f_n^\theta)^{-1} = -T_n(f_n^\theta)^{-1}  T_n(\partial_{r} f_n^\theta) T_n(f_n^\theta)^{-1}$, the product rule of differentiation, the expansion $T_n(f^{\theta_0}_n) = T_n(f^{\theta}_n) + \int_0^1 T_n((\nabla f^{\theta+u(\theta_0-\theta)}_n)^\T(\theta_0-\theta)) du$  and Theorem \ref{thm:toeplitz-inverse} (and Remark~\ref{rem}) to find that
\[ \|\partial_{r} A_n(\theta)\|_F^2
		= \tr[\partial_{r} A_n(\theta)^\T \partial_{r} A_n(\theta) ]= \mathcal{J}_{n,r}(\theta)^2 + R_{n,r}(\theta)\]
		where
			\begin{align*}
			 &\mathcal{J}_{n,r}(\theta)^2\\
			 &= \frac{n}{2\pi}\int_{-\pi}^\pi \left\{ \sum_{s=1}^p \frac{f_n^{\theta_0}(\lambda) }{f_n^\theta(\lambda)} \left(\prod_{l=1,\, l\neq s}^p \frac{g_{n,l}^{\theta}(\lambda)}{f_n^{\theta}(\lambda)}\right)\left( \frac{\partial_{r} g_{n,s}^{\theta}(\lambda)}{f_n^\theta(\lambda)} - \frac{p+1}{p} \frac{g_{n,s}^{\theta}(\lambda)}{f_n^\theta(\lambda)} \frac{\partial_{r} f_n^\theta(\lambda)}{f_n^\theta(\lambda)} \right) \right\}^2\, d\lambda
	\end{align*}
	and
	\begin{align*}
		R_{n,r}(\theta)&\leq K\delta_n^\star(\theta)  \bigg(\sup_{\theta'\in\Theta_0}(1\vee\lVert \theta-\theta_0\rVert\delta^\star_n(\theta,\theta'))\delta^\star_n(\theta)  \delta^\ast_{n}(\theta)^p\\ &\quad+\bigl(1\vee n^{3\eta}\delta^\star_n(\theta)\lVert\theta-\theta_0\rVert\bigr) \delta^\star_n(\theta)^{p+1}n^{2\eta} \delta_{n}(\theta)^p\bigg)^2 \\
		&\leq Kn^{10\eta}(1\vee\delta^\star_n(\theta)\lVert \theta-\theta_0\rVert)\delta^\star_n(\theta)^2\Big( \delta^\ast_{n}(\theta)^p +  \delta^\star_n(\theta)^{p}\delta_{n}(\theta)^p\Big)^2.
	\end{align*}
	Hence, we have shown that
	\begin{equation}\label{eqn:chaining}\begin{split}  
		\|\tilde{Z}_n(\theta_1)-\tilde{Z}_n(\theta_2)\|_{\psi_1}
		&\leq K\left\| A_n(\theta_1) - A_n(\theta_2) \right\|_F    \\
		&  \leq K \|\theta_1-\theta_2\| \sup_{\theta\in\Theta_0} \bigg(  \max_{r}\mathcal{J}_{n,r}(\theta)
		+n^{5\eta}(1\vee\delta^\star_n(\theta)\lVert \theta-\theta_0\rVert) \delta^\star_n(\theta)^2  \\
		&\quad\times\Big(\delta^\ast_{n}(\theta)^p+ \delta^\star_n(\theta)^{p}  \delta_{n}(\theta)^p\Big)\bigg) \\
		&= \|\theta_1-\theta_2\| \widetilde{K}_n, \end{split} 
\end{equation}
	where $\widetilde{K}_n$ is the supremum term.

	Since $\Theta_0$ is a bounded $M$-dimensional set with diameter $\text{diam}(\Theta_0)=\sup_{\theta\in\Theta_0}\|\theta-\theta_0\|$, the exponential inequality \eqref{eqn:chaining} may be leveraged by a standard chaining argument using the metric $d(\theta_1,\theta_2) = \|\theta_1-\theta_2\|\widetilde{K}_n$.
	This yields the covering numbers $N(z,\Theta_0,d) \propto  (\widetilde{K}_n\cdot \mathrm{diam}(\Theta_0) / z)^{M}$, and allows us to establish the uniform bound
	\begin{align*}
		\left\|\sup_{\theta\in\Theta_0} |\tilde{Z}_n(\theta)- \tilde{Z}_n(\theta_0)|\right\|_{\psi_1} 
		& \leq C \int_0^{\widetilde{K}_n  \mathrm{diam}(\Theta_0)} \log\left( \frac{z}{\widetilde{K}_n  \mathrm{diam}(\Theta_0)}\right)^{-M} \, dz  \\
		&= C \widetilde{K}_n \mathrm{diam}(\Theta_0) \int_0^{1} \lvert\log z\rvert  \, dz   \\
		& = C \widetilde{K}_n \mathrm{diam}(\Theta_0) ,
	\end{align*}
	for a potentially bigger $C$ at each occurrence. 
	
	To simplify the expression for $\mathcal{J}_{n,r}(\theta)$, observe that \eqref{eqn:ass-CLT-c} implies the bound $f_n^{\theta_0}(\la)/f_n^\theta(\la) \leq L(\eps)^2\sum_{i,k} c_{i,k'}(n,\theta_0)\ov c_{i',k}(n,\theta)^{-1}\lvert \la\rvert^{\ov\al_{i',k}(\theta)-\al_{i,k'}(\theta_0)-2\eps}\leq Kn^{2\eta} L(\eps^2)mq\lvert\la\rvert^{-2\eps-\eta}$ as well as the bound
	$\lvert \partial_r f_n^{\theta}(\la)\rvert/f_n^\theta(\la) \leq K  L(\eps^2)n^\eta mq\lvert\la\rvert^{-2\eps}$. Therefore, 
	\begin{align*}
	\mathcal{J}_{n,r}(\theta)^2 &\leq K n\int_{-\pi}^\pi \left\{  \sum_{s=1}^p \frac{f_n^{\theta_0}(\lambda) }{f_n^\theta(\lambda)} \left|\prod_{l=1,\, l\neq s}^p \frac{g^{\theta}_{n,l}(\lambda)}{f_n^{\theta}(\lambda)}\right|\left( \frac{|\partial_{r} g_{n,s}^{\theta}(\lambda)|}{f_n^\theta(\lambda)} + \frac{|g^{\theta}_{n,s}(\lambda)|}{f_n^\theta(\lambda)} \frac{|\partial_{r} f_n^\theta(\lambda)|}{f_n^\theta(\lambda)} \right)\right\}^2 \, d\lambda \\
		&  \leq K n^{1+6\eta}\int_{-\pi}^\pi |\lambda|^{-2\eta-8\epsilon} \left\{ \sum_{s=1}^p  \left|\prod_{l=1,\, l\neq s}^p \frac{g^{\theta}_{n,l}(\lambda)}{f_n^{\theta}(\lambda)}\right| \frac{|g^{\theta}_{n,s}(\lambda)| + |\partial_{r} g_{n,s}^{\theta}(\lambda)|}{f_n^\theta(\lambda)}\right\}^2 \, d\lambda \\
		&  \leq K n^{1+6\eta}\int_{-\pi}^\pi |\lambda|^{-3\eta} \left\{ \prod_{l=1}^p \frac{|g^{\theta}_{n,l}(\lambda)| + \| \nabla g^{\theta}_{n,l}(\lambda)\|}{f_n^{\theta}(\lambda)} \right\}^2 \, d\lambda,
	\end{align*}
	upon choosing $\epsilon>0$ small enough. 
\end{proof}

\subsection{Proof of Theorem \ref{thm:LAN}}

We proceed to study the likelihood expansion of $X_n$.
The log-likelihood $l_n(\theta)$ is given by
\begin{align*}
	l_n(\theta) = -\frac{n}{2} \log (2\pi) -\frac{1}{2} \log [\det T_n(f_n^{\theta})] - \frac{1}{2}   X_n^\T T_n(f_n^{\theta})^{-1} X_n.
\end{align*}
The derivation of the LAN property for this model is based on a second-order Taylor expansion of $l_n(\theta)$.
To compute its first two derivatives, we write $\partial_k = \partial_{\theta_k}$ as before and note that $\partial_k T_n(f_n^\theta) = T_n (\partial_k f_n^\theta)$.
Furthermore, we use that for a parametric symmetric matrix $A=A(x)$,
\begin{align*}
		\partial_x \tr(A) &= \tr(\partial_x A),&
		\partial_x A^{-1} &= -A^{-1} (\partial_x A) A^{-1},\\
		\partial_x \det(A)&= \det(A) \tr(A^{-1} \partial_x A),&
		\partial_x \log(\det(A)) &= \tr(A^{-1} \partial_x A).
\end{align*}
For the log-likelihood, we thus obtain the expressions
\begin{equation}
	\partial_{k} l_n(\theta) 
	= -\frac{1}{2} \tr \left( T_n(f_n^{\theta})^{-1}  T_n(\partial_{k}f_n^{\theta}) \right) + \frac{1}{2}  X_n^\T T_n(f_n^{\theta})^{-1}  T_n(\partial_{k} f_n^{\theta}) T_n(f_n^{\theta})^{-1} X_n ,  \label{eqn:loglik-gradient}\end{equation}
and
\begin{align}
		\partial_{kj} l_n(\theta) 
		&= -\frac{1}{2} \tr \left( T_n(f_n^{\theta})^{-1}  T_n(\partial_{kj}f_n^{\theta}) \right) + \frac{1}{2} \tr \left( T_n(f_n^{\theta})^{-1} T_n(\partial_jf_n^{\theta}) T_n(f_n^{\theta})^{-1}  T_n(\partial_{k}f_n^{\theta}) \right)\nonumber\\
		&\quad + \frac{1}{2}   X_n^\T T_n(f_n^{\theta})^{-1} T_n(\partial_{kj} f_n^{\theta}) T_n(f_n^{\theta})^{-1} X_n\label{eqn:loglik-hess} \\
		&\quad - X_n^\T T_n(f_n^{\theta})^{-1} T_n(\partial_j f_n^{\theta})T_n(f_n^{\theta})^{-1}  T_n(\partial_{k}f_n^{\theta}) T_n(f_n^{\theta})^{-1} X_n. 
	\nonumber 
\end{align}

\begin{proof}[Proof of Theorem \ref{thm:LAN}]
	As remarked after Assumption~\ref{ass:gauss-lan-2-ext}, one can view Assumption~\ref{ass:gauss-lan-2} as a special case of Assumption~\ref{ass:gauss-lan-2-ext} except that the growth condition $\frac{1}{\sqrt{Kn^{r}}}\leq\ov c_{i,k}(n,\theta)\leq \sqrt{Kn^{r}}$ is not assumed. This condition is used below only to show that $R_{i,k}(n,\theta)$   in \eqref{eq:R-2} is of polynomial growth in $n$. This is trivially true under Assumption~\ref{ass:gauss-lan-2} (in fact, we have $R_{i,k}(n,\theta)=1$ in this case). Therefore, there is no loss of generality to prove the theorem under Assumption~\ref{ass:gauss-lan-2-ext}, which we do now.
	
	Let $a\in\R^M$, $p=1$ and $g_n^{\theta}(\lambda)=g_{n,1}^{\theta}(\lambda) = \frac12a^\T R_n^\T \nabla f_n^{\theta}(\lambda)$ such that   $a^\T R_n^\T \nabla l_n(\theta_0) = Z_n(\theta_0)$ where $Z_n(\theta_0)$ is as in Theorem \ref{thm:CLT}.
	By Assumption \ref{ass:gauss-lan-2-ext}, we have $\delta^\ast_{n}(\theta)\leq Kn^\eps\lVert R_n\rVert^{1/2+\iota}$  as well as $\delta_{n}(\theta)\leq K\lVert R_n\rVert n^{\eta+\eps(1+r)}$. Upon  choosing $i^\star=i^\ast$ and $k^\star = k^\ast$, we also have $\delta^\star_n(\theta)\leq Kn^\eps \lVert R_n\rVert^{-1/2+\iota}$.
	As $\lVert R_n\rVert$ decays to $0$ polynomially, one can choose $\eps<\eta/(1+r)$ and $\eta\in(0,1)$ small enough such that   $\lVert R_n\rVert^\iota n^{2\eta}\to0$. In this case, we obtain
	\begin{align*}
	 \delta_n^\star(\theta_0)\left(\delta^\ast_{n}(\theta_0)+\delta^\star_n(\theta_0)\delta_{n}(\theta_0) \right)
		\leq Kn^{4\eta}\lVert R_n\rVert^{2\iota}\to0.
	\end{align*}
	Moreover, by Assumption \ref{ass:gauss-lan-1}, $\phi_n(\theta_0)\to a^\T I(\theta_0)a >0$. Thus, the conditions of Theorem \ref{thm:CLT} (ii) are satisfied and $a^\T R_n^\T\nabla l_n(\theta_0) \wconv \mathcal{N}(0, a^\T I(\theta_0)a)$. 
	As $a\in \R^M$ was arbitrary, the Cramer--Wold device yields $\xi_n(\theta_0)=R_n^\T \nabla l_n(\theta_0)\wconv \mathcal{N}(0,I(\theta_0))$.  
	
	Next, 	choose some $a\in\mathcal{H}$.
	Since $\mathcal{H}$ is compact and $\log(n)R_n\to 0$, for each $\eta>0$, there is some $n(\eta)$ such that for all $n\geq n(\eta)$, we have $\theta_0 + \log(n)R_n a \in B(\eta)\subset\Theta$.
	For $n\geq n(\eta)$, the mean value theorem implies that the log-likelihood may  be represented as 
	\begin{equation}
		l_n(\theta_0+R_n a)-l_n(\theta_0) 
		= a^\T R_n^\T \nabla l_n(\theta_0)	+ \frac{1}{2}a^\T R_n^\T \tilde{I}_n R_n a, 
		\label{eqn:ln-expansion}
	\end{equation}
	where $\tilde{I}_n
	= D^2 l_n(\theta_0 + z R_n a)$ 
	for some $z\in[0,1]$. 
	We want to show  that 
	\begin{align}
	\sup_{b\in \mathcal{H}} \left| d^\T R_n^\T[D^2 l_n(\theta_0+\log(n)R_n b)] R_n d + d^\T I(\theta_0)d \right| \pconv 0, \label{eqn:fisher-uniform}
\end{align}
	for any $d\in\R^M$ with $\lVert d\rVert\leq1$ and any compact set $\mathcal{H}\subset \R^M$. This implies that   $d^\T R_n^\T[D^2 l_n(\theta_0+R_n b)] R_n d \pconv I(\theta_0)$ uniformly on compacts in $b\in\mathcal{H}$, which in turn yields $R_n^\T\tilde{I}_n R_n \pconv I(\theta_0)$, uniformly in $a\in\mathcal{H}$.
	
To prove \eqref{eqn:fisher-uniform}, define $h_{n}^{\theta} =  d^\T R_n^\T \nabla f_n^\theta$ and $\ov{h}_{n}^{\theta} =  d^\T R_n^\T D^2 f_n^\theta R_n d$.
	Then, using the representation \eqref{eqn:loglik-hess} for $D^2l_n(\theta)$, we obtain
	\begin{equation}
		d^\T R_n^\T D^2 l_n(\theta) R_n d 
		= \frac{1}{2} Z_{n}^1(\theta) - Z_{n}^2(\theta) - \frac{1}{2} \tr\left( T_n(f^\theta_n)^{-1} T_n(h_{n}^{\theta})T_n(f^\theta_n)^{-1} T_n(h_{n}^{\theta})  \right) , \label{eqn:fisher-uniform-decomp} 
	\end{equation}
where
\begin{align*}
		Z_{n}^1(\theta) 
		&=  X_n^\T T_n(f^\theta_n)^{-1} T_n(\ov{h}_{n}^{\theta})T_n(f^\theta_n)^{-1} X_n   - \tr\left( T_n(f^\theta_n)^{-1} T_n(\ov{h}_{n}^{\theta}) \right), \\
		Z_{n}^2(\theta) 
		&=   X_n^\T T_n(f^\theta_n)^{-1} T_n(h_{n}^{\theta})T_n(f^\theta_n)^{-1} T_n(h_{n}^{\theta})T_n(f^\theta_n)^{-1} X_n   \\
		&\quad - \tr\left( T_n(f^\theta_n)^{-1} T_n(h_{n}^{\theta})T_n(f^\theta_n)^{-1} T_n(h_{n}^{\theta}) \right). 
	\end{align*}

Regarding $Z^1_n(\theta)$,	we apply Theorem \ref{thm:CLT} (iii) with $p=1$ and $g_{n,1}^{\theta}=\ov h_n^\theta$. By Assumption~\ref{ass:gauss-lan-2-ext}, choosing $i^\star=i^\ast$ and $k^\star=k^\ast$, we obtain for $\eps<\eta/(1+r)$ that $\delta^\ast_{n}(\theta)\leq K\lVert R_n\rVert^{3/2+\iota}n^\eta$, $\delta^\star_n(\theta)\leq K\lVert R_n\rVert^{-1/2+\iota}n^\eta$ as well as $\delta_{n}(\theta)\leq Kn^{2\eta}\|R_n\|^2$.
Thus, if $\|\theta-\theta_0\|\leq K \|R_n\| \log (n)$, then
\begin{equation*}
	n^{5\eta} \left(1\vee \delta_n^\star(\theta) \|\theta-\theta_0\| \right)\delta_n^\star(\theta_0)^2\left(\delta^\ast_{n}(\theta_0)+\delta^\star_n(\theta_0)\delta_{n}(\theta_0) \right) \leq K\lVert R_n\rVert^{1/2+4\iota}n^{10\eta}\to0.
\end{equation*}
Moreover, $J_n(\theta)\leq K n^{- {1}/{2}+3\eta/2} \|R_n\|$ by Assumption \ref{ass:gauss-lan-1}, and hence $J_n(\theta)\leq K n^{-{1}/{2}-4\eta}$ for $\eta$ small enough.
Analogously, $\phi_n(\theta_0)^2\leq K n^{\eta} \|R_n\|^2 \leq K n^{-\eta}$. 
Thus, Theorem \ref{thm:CLT} (iii) yields for sufficiently small $\eta$ that
\begin{align*}
	\sup_{b\in \mathcal{H}} \left| Z_n^1(\theta_0 + \log(n) R_n b) - J_n^*(\theta_0+\log(n)R_n b)  \right| \pconv 0.
\end{align*}
Furthermore, observe that for $\theta_n(b)=\theta_0+ \log(n) R_n b$ and sufficiently small $\eta$,
\begin{align*}
	J_n^*(\theta_n(b)) 
	&\leq K n \log (n)\sup_{c\in \mathcal{H}} \|b-c\| \int_{-\pi}^\pi \frac{\|R_n^\T \nabla f_n^{\theta_n(c)}(\lambda)\|   \| \ov{h}_n^{\theta_n(b)}(\lambda) \| }{f_n^{\theta_n(b)}(\lambda)^2}\, d\lambda \\
	&\leq K n \log (n) \| R_n\| \sup_{\theta,\theta'\in B(\eta)} \int_{-\pi}^\pi \frac{\|R_n^\T \nabla f_n^{\theta'}(\lambda)\|   \| R_n^\T D^2f_n^{\theta}(\lambda) \| }{f_n^{\theta}(\lambda)^2}\, d\lambda \\
	&\leq K n^{1+4\eta} \log (n) \| R_n\|  \int_{-\pi}^\pi |\lambda|^{-\eta} \frac{\|R_n^\T \nabla f_n^{\theta_0}(\lambda)\|   \| R_n^\T D^2f_n^{\theta_0}(\lambda)\| }{f_n^{\theta_0}(\lambda)^2}\, d\lambda \\
	&\leq K n^{-\eta}, 
\end{align*}
where we used Assumption~\ref{ass:gauss-lan-2-ext} for the third step and the Cauchy--Schwarz inequality and Assumption \ref{ass:gauss-lan-1} for the last step.
Hence, for any compact set $\mathcal{H}\subset\R^M$, we find that
\begin{align*}
	\sup_{b\in \mathcal{H}} \left| Z_n^1(\theta_0 + \log(n) R_n b) \right| \pconv 0.
\end{align*} 
	
Regarding $Z^2_n(\theta)$, 
	we apply Theorem \ref{thm:CLT} (iii) with $p=2$ and $g^\theta_{n,1}=g^\theta_{n,2}=h_n^\theta$. Note that $\delta_{n}(\theta)\leq Kn^{2\eta}\|R_n\|$, $\delta^\ast_{n}(\theta)\leq Kn^{\eta}\lVert R_n\rVert^{1/2+\iota}$ and $\delta_n^\star(\theta)\leq K\lVert R_n\rVert^{-1/2+\iota}n^\eta$. 
	Therefore,  for $\|\theta-\theta_0\|\leq K \|R_n\| \log(n)$, we find 
	\begin{equation*}
		n^{5\eta}\left(1\vee \delta_n^\star(\theta) \|\theta-\theta_0\| \right)\delta_n^\star(\theta_0)^2\left(\delta^\ast_{n}(\theta_0)^2+\delta^\star_n(\theta_0)^2\delta_{n}(\theta_0)^2 \right)  \leq Kn^{13\eta} \lVert R_n\rVert^{4\iota} \to 0
	\end{equation*} 
for small enough $\eta$. 
	Moreover, using Assumption~\ref{ass:gauss-lan-2-ext} for the second step and Assumption~\ref{ass:gauss-lan-1} for the third step, we obtain for $\epsilon,\eta>0$ sufficiently small,
	\begin{align*}
		\phi_n(\theta_0)^2	&\leq  \frac n\pi\int_{-\pi}^{\pi} \|R_n\|^2  \left(\frac{\lVert \nabla f_n^{\theta_0}(\lambda)\rVert}{f_n^{\theta_0}(\lambda)}\right)^2  \left(\frac{ \lVert R_n^\T\nabla f_n^{\theta_0}(\lambda)\rVert}{f_n^{\theta_0}(\lambda)}\right)^2\, d\lambda \\
&		\leq K \|R_n\|^2n\int_{-\pi}^{\pi}  n^{2\eta} |\lambda|^{-4\epsilon} \left(\frac{\lVert R_n^\T\nabla f_n^{\theta_0}(\lambda)\rVert}{f_n^{\theta_0}(\lambda)}\right)^2 \, d\lambda 
		\leq K n^{3\eta} \|R_n\|^2 
		\leq K n^{-\eta}.
	\end{align*}
	Assumption \ref{ass:gauss-lan-1} also yields
	$\ov J_n=\sup_{\theta\in B(\eta)} J_n(\theta) \leq K n^{-1/2+5\eta/2}\lVert R_n\rVert$ for all $\theta\in B(\eta)$ and $\ov J_n^\ast = \sup_{b\in\calh} J_n^\ast(\theta_0+\log(n)R_nb)\leq Kn^{\eta}\log (n)\lVert R_n\rVert$.
	Since $R_n\to 0$ polynomially, for $n$ large enough such that $\theta_0+\log(n)R_nb\in B(\eta)$ for all $b\in\mathcal{H}$, we have
$
		\sup_{b\in\mathcal{H}} |Z_n^2(\theta_0+\log(n) R_n b)| \stackrel{\P}{\longrightarrow} 0
$
	upon choosing $\eta$ small enough.
	
 Returning to the decomposition \eqref{eqn:fisher-uniform-decomp}, it remains to  the study the third term.
	We apply Theorem~\ref{thm:toeplitz-inverse}  with $p=2$ and  $g^\theta_{n,1}=g^\theta_{n,2}=h_n^\theta$. As in the previous step, we have $\delta_{n}(\theta)\leq K n^{2\eta}\lVert R_n\rVert$, $\delta^\ast_{n}(\theta)\leq Kn^{\eta}\lVert R_n\rVert^{1/2+\iota}$ and $\delta^\star_n(\theta)\leq K\lVert R_n\rVert^{-1/2+\iota}n^\eta$.
	Thus,  
	\begin{align*}
		&\sup_{\theta\in B(\eta)} \left|\frac{1}{2}\tr\left( T_n(f^\theta_n)^{-1} T_n(h_{n}^{\theta})T_n(f^\theta_n)^{-1} T_n(h_{n}^{\theta})  \right) - \frac{n}{4\pi}\int_{-\pi}^{\pi} \frac{h_n^{\theta}(\lambda)^2}{f_n^\theta(\lambda)^2}  \,d\la\right|\\
		&\quad\leq K (n^{3\eta}\lVert R_n\rVert^{1/2+2\iota}+ n^{7\eta}\lVert R_n\rVert^{1/2+3\iota}),
	\end{align*}
	which tends to zero as $n\to\infty$ for small enough $\eta$.
	Finally, by Assumption \ref{ass:gauss-lan-1}, 
	\begin{align*}
		&\quad \sup_{\substack{\theta = \theta_0 + \log(n)R_n b, \\ b\in\mathcal{H}}} \left|\frac{n}{4\pi}\int_{-\pi}^{\pi} \frac{h_n^{\theta}(\lambda)^2}{f_n^\theta(\lambda)^2} \,d\la- d^\T I(\theta_0)d  \right| 
		\to 0
	\end{align*}
as $n\to\infty$. This proves \eqref{eqn:fisher-uniform}, which in turn yields \eqref{eq:thm.1}.
	
Concerning \eqref{eq:thm.2}, note that 
	instead of solving $\nabla l_n(\theta)=0$ for $\theta$, we may equivalently solve $F_n(a)=R_n^\T\nabla l_n(\theta_0+R_n a)=0$ in $a\in\R^M$.
	Above, we have shown that $F_n(0)\wconv \mathcal{N}(0, I(\theta_0))$ and 
$
		\sup_{\|a\|\leq \log(n)}\left\| DF_n(a) + I(\theta_0) \right\| \pconv 0.
$
	Standard results for estimating equations (e.g., Theorem~A.2 of \cite{mies_estimation_2023}) yield the existence of a sequence $\widehat{a}_n$ of random vectors such that $\P^n_{\theta_0}(F_n(\widehat{a}_n)=0)\to 1$ and $\widehat{a}_n \wconv \mathcal{N}(0, I(\theta_0)^{-1})$.
	Equivalently, there is  a sequence $\widehat{\theta}_n=\theta_0 + R_n \widehat{a}_n$ such that $\P^n_{\theta_0}(\nabla l_n(\widehat{\theta}_n)=0)\to 1$ and $R_n^{-1}(\widehat{\theta}_n-\theta_0)\wconv \mathcal{N}(0, I(\theta_0)^{-1})$.
\end{proof}

	\section*{Acknowledgment}
	We thank Shuping Shi and Jun Yu for valuable comments. CC is partially supported by ECS Grant 26301724.

	\bibliography{fbm}
	\bibliographystyle{abbrvnat}

\end{document}